\newtheorem{theorem}{Theorem}[section]
\newtheorem{lemma}[theorem]{Lemma}
\newtheorem{corollary}[theorem]{Corollary}
\theoremstyle{definition}
\newtheorem{definition}[theorem]{Definition}
\theoremstyle{remark}
\newcounter{smalllist}
\DeclareMathOperator*{\arrow}{\rightarrow}
\numberwithin{equation}{section}
\newcommand{\lb}{\label}
\newcommand{\beq}{\begin{equation}}
\newcommand{\eeq}{\end{equation}}
\newcommand{\bal}{\begin{align}}
\newcommand{\eal}{\end{align}}
\newcommand{\bals}{\begin{align*}}
\newcommand{\eals}{\end{align*}}
\newcommand{\bbN}{{\mathbb{N}}}
\newcommand{\bbR}{{\mathbb{R}}}
\newcommand{\bbP}{{\mathbb{P}}}
\newcommand{\bbE}{{\mathbb{E}}}
\newcommand{\bbZ}{{\mathbb{Z}}}
\newcommand{\bbC}{{\mathbb{C}}}
\newcommand{\bbS}{{\mathbb{S}}}
\newcommand{\calC}{{\mathcal C}}
\newcommand{\calF}{{\mathcal F}}
\newcommand{\eps}{\varepsilon}
\newcommand{\del}{\delta}
\newcommand{\tht}{\theta}
\newcommand{\til}{\tilde}
\begin{document}
\title[Propagation of Reactions  in Inhomogeneous Media]
{Propagation of Reactions 
in Inhomogeneous Media}

\author{Andrej Zlato\v s}

\address{\noindent Department of Mathematics \\ University of
Wisconsin \\ Madison, WI 53706, USA \newline Email: \tt
zlatos@math.wisc.edu}


\begin{abstract} Consider reaction-diffusion equation $u_t=\Delta u + f(x,u)$ with $x\in\bbR^d$ and general inhomogeneous ignition reaction $f\ge 0$ vanishing at $u=0,1$.  Typical solutions $0\le u\le 1$ transition from $0$ to $1$ as time progresses, and we study them in the region where this transition occurs.  Under fairly general qualitative hypotheses on $f$ we show that in dimensions $d\le 3$,  the Hausdorff distance of the super-level sets $\{u\ge\eps\}$ and $\{u\ge 1-\eps\}$ remains uniformly bounded in time for each $\eps\in(0,1)$.
Thus, $u$ remains uniformly in time close to the characteristic function of $\{u\ge\tfrac 12\}$ in the sense of Hausdorff distance of super-level sets.
We also show that $\{u\ge\tfrac 12\}$ expands with average speed (over any long enough time interval)  between the two spreading speeds corresponding to any $x$-independent lower and upper bounds on $f$.  On the other hand,  these results turn out to be false in dimensions $d\ge 4$, at least without further quantitative hypotheses on $f$.  The proof for $d\le 3$ is based on showing that as the solution propagates, 
small values of $u$ cannot escape far ahead of values close to 1.  The proof for $d\ge 4$ involves construction of a counter-example for which this fails.

Such results were before known for $d=1$ but are new for general non-periodic media in dimensions $d\ge 2$ (some are also new  for homogeneous and periodic media).  They extend in a somewhat weaker sense to  monostable, bistable, and mixed reaction types, as well as to transitions between general equilibria $u^-<u^+$ of the PDE, and to solutions not necessarily satisfying $u^-\le u\le u^+$.  
\end{abstract}

\maketitle

\section{Introduction and Motivation} \lb{S1}

Reaction-diffusion equations are used to model a host of natural processes such as combustion, chemical reactions, or population dynamics.  The baseline model, which already captures a lot of the properties of the dynamics involved, is the parabolic PDE
\beq \lb{1.1}
u_t = \Delta u + f(x,u)
\eeq
for $u:(t_0,\infty)\times\bbR^d \to \bbR$, where $t_0\in[-\infty,\infty)$ and $d\ge 1$.  If $t_0>-\infty$, then we also let
\beq \lb{1.2}
u(t_0,x)=u_0(x)  \qquad\text{for $x\in\bbR^d$.}  
\eeq
 The Lipschitz {\it reaction function} $f$ is such that there exist two ordered {\it equilibria} (time-independent solutions) 
$u^-<u^+$ for \eqref{1.1}, and one is usually interested in studying the transition of general solutions of \eqref{1.1} from one to the other as $t\to\infty$.

A prototypical situation is when $u^-\equiv 0$ and $u^+\equiv 1$, with $f\ge 0$ vanishing at $u=0,1$.
Here $u\in[0,1]$ is the (normalized) temperature of fuel, concentration of a reactant, or population density.  Depending on the application, $f$ may be either an {\it ignition reaction} (vanishing near $u=0$) in combustion models; or a {\it monostable reaction} (positive for $u\in (0,1)$) 
such as {\it Zeldovich} and {\it Arrhenius reactions} with $f_u(x,0)\equiv0$ in models of chemical reactions and {\it KPP reaction} with $f(x,u)\le f_u(x,0)u$ for all $u\ge 0$ in population dynamics models. 

For the sake of clarity of presentation, we will first study this scenario, where our main results are Theorems \ref{T.1.2} and \ref{T.1.3}.   Later we will extend these to more general situations: with general $u^-<u^+$, different types of reactions, including mixtures of ignition, monostable, and {\it bistable reactions} (the latter have $[f(x, u)-f(x,u^\pm(x))][u-u^\pm(x)]<0$ for $u$ near $u^\pm(x)$), and for solutions not necessarily satisfying $u^-\le u\le u^+$ (see Theorems \ref{T.1.11} and \ref{T.1.12}).
However, in order to minimize technicalities, our first result will be stated in the even simpler setting of ignition reactions with a constant ignition temperature (see Theorem \ref{T.1.0} below).  

The study of transitions between equilibria of reaction-diffusion equations has seen a lot of activity since the seminal papers of Kolmogorov, Petrovskii, Piskunov  \cite{KPP} and Fisher \cite{Fisher}.  Of central interest has been long time propagation of solutions with ``typical'' initial data, and the related questions about traveling fronts.  The first type of such initial data are {\it spark-like data} --- compactly supported, such as in \eqref{1.5a} below. The second are {\it front-like data} ---  vanishing on a half-space $\{x\cdot e\ge R\}$ for some unit vector $e\in\bbR^d$ and with $\liminf_{x\cdot e\to-\infty} u_0(x)$ close enough to 1, such as in \eqref{1.6a}. For ignition reaction one can also allow rapid decay to 0 as $|x|\to\infty$ or $x\cdot e\to\infty$, such as in \eqref{1.5} and \eqref{1.6}. 
We will for now discuss these data (and also call the corresponding solutions front-like and spark-like), but later we will turn to more general ones (see, e.g.,  Theorem \ref{T.1.3}).

In both cases it was proved, first for homogeneous ($x$-independent) reactions in several dimensions by Aronson, Weinberger \cite{AW} and then for $x$-periodic ones by Freidlin, G\" artner \cite{Freidlin, GF} and Weinberger \cite{Weinberger}, that for typical solutions, the state $u=1$ invades $u=0$ with a speed that is asymptotically constant (in each direction for spark-like data) as $t\to\infty$.  Specifically, that for each unit $e\in\bbR^d$ there is a ({\it front speed}) $c_e>0$ such that for any $\del>0$,
\beq \lb{0.1}
\lim_{t\to\infty} \inf_{x\cdot e\le (c_e-\del)t} u(x,t) = 1 \qquad\text{and}\qquad  \lim_{t\to\infty} \sup_{x\cdot e\ge (c_e+\del)t} u(x,t) = 0
\eeq
for front-like initial data; and there is a ({\it spreading speed}) $s_e\in(0,c_e]$ such that  for any $\del>0$,
\beq \lb{0.2}
\lim_{t\to\infty} \inf_{x\in (1-\del)tS} u(x,t) = 1 \qquad\text{and}\qquad  \lim_{t\to\infty} \sup_{x\notin (1+\del)tS} u(x,t) = 0
\eeq
for spark-like initial data, where  $S:=\{ se \,\big|\, \|e\|=1 \text{ and }  0\le s\le s_e  \}$ is the {\it Wulff shape} for $f$.  Of course, for homogeneous reactions there is $c>0$ such that $s_e=c_e=c$ for all unit $e\in\bbR^d$.

Closely related to this is the study of {\it traveling fronts} for $x$-independent $f$ and {\it pulsating fronts} for $x$-periodic $f$. Traveling fronts are front-like {\it entire} (with $t_0=-\infty$) solutions of \eqref{1.1} moving with a constant speed $c$ in a  unit direction $e\in\bbR^d$, of the form $u(t,x)=U(x\cdot e-ct)$ with $\lim_{s\to-\infty}U(s)=1$ and $\lim_{s\to\infty}U(s)=0$.  Pulsating fronts, first introduced by Shigesada, Kawasaki, Teramoto \cite{SKT} and proved to exist for general periodic $f$ as above 
by Xin \cite{Xin3} and Berestycki, Hamel \cite{BH}, are similar but $u(t,x)=U(x\cdot e-ct, x)$ and $U$ is periodic in the second argument.  
The minimal of the speeds for which such a front exists for a given unit $e\in\bbR^d$ is then precisely $c_e$, and we also have $s_e=\inf_{e'\cdot e>0} [c_{e'}/(e'\cdot e)]$.  

The above results hold for fairly general $f\ge 0$, and there is a vast literature on these and many other aspects of reaction-diffusion equations in homogeneous and periodic media.  Instead of a more comprehensive discussion, we refer to \cite{BH, Weinberger} and the excellent reviews by Berestycki \cite{Berrev} and Xin \cite{Xin2}.

Unsurprisingly, the picture becomes less satisfactory for non-periodic reactions, particularly in the several spatial dimensions case $d\ge 2$.  The above results and the comparison principle show that if $c_0$ and $c_1$ are the ($e$-independent) speeds associated with homogeneous reactions $f_0$ and $f_1$ such that $f_0\le f\le f_1$, 
then \eqref{0.1}, \eqref{0.2} hold with $c_e$ and $S$ replaced by $c_0$ and $B_{c_0}(0)$ in the first statements and by $c_1$ and $B_{c_1}(0)$ in the second ones.  That is, transition between $u\sim 0$ and $u\sim 1$ occurs inside a spatial strip or annulus whose width grows linearly in time with speed $c_1-c_0$ (while for homogeneous and $x$-periodic media it grows sub-linearly, by taking $\delta\to 0$ in \eqref{0.1}, \eqref{0.2}).  In the general inhomogeneous case, these estimates cannot be improved, unless one includes further restrictive hypotheses on $f$ or is willing to tolerate complicated formulas involving $f$.  

For stationary ergodic reactions, the results should hold as originally stated, but also with $|x-x\cdot e|\le Ct$ for any $C<\infty$ in \eqref{0.1}.  For $d\ge 2$  this {\it homogenization} result was  proved only in the KPP case, by Lions, Souganidis \cite{LioSou}.
(This case has an important advantage of a close relationship of the dynamics for \eqref{1.1} and for its linearization at $u=0$.  Other authors also exploited this link in the study of spreading for KPP reactions, e.g., Berestycki, Hamel, Nadin \cite{BHNad}.  However, results aiming to more precisely  locate  the transition region for non-stationary-ergodic reactions are somewhat restricted by the necessity of more complicated hypotheses involving the reaction.)   Results from the present paper can be used to approach this problem for ignition and non-KPP monostable reactions.  This will be done elsewhere.

The above results hold also in the case $d=1$, with the stationary ergodic KPP reaction result proved earlier in \cite{GF}.  However,  some recent developments have gone further, particularly for ignition reactions.  Mellet, Nolen, Roquejoffre, Ryzhik, Sire \cite{NolRyz, MRS, MNRR} proved for reactions of the form $f(x,u)=a(x)f_0(u)$ (with $f_0$ vanishing on $[0,\tht_0]\cup\{1\}$ and positive on $(\tht_0,1)$, and $a\ge 1$ bounded above),
and Zlato\v s \cite{ZlaGenfronts} for more general ignition reactions 
the following.  There is a unique right-moving (and a unique left-moving) transition front solution and as $t\to\infty$, each front-like solution with $e=1$ ($e=-1$) converges in $L^\infty_x$ to its time-translate.  A similar result holds for spark-like solutions, when restricted to $\bbR^+$ ($\bbR^-$).  
Moreover, if $f$ is stationary ergodic, then \eqref{0.1}, \eqref{0.2} hold with some $c_e=s_e$ for $e=\pm 1$.

The {\it transition fronts} appearing here are a generalization of the concepts of traveling and pulsating fronts to disordered (non-periodic) media.  In the one-dimensional setting they are  entire solutions   
of \eqref{1.1} satisfying
\beq \lb{0.3}
\lim_{x\to\mp\infty} u(t,x)=1 \qquad\text{and}\qquad  \lim_{x\to\pm\infty} u(t,x)=0
\eeq
for each $t\in\bbR$ (with upper sign for right-moving fronts and lower sign for left-moving fronts), 
as well as $\sup_{t\in\bbR} L_{u,\eps}(t)$ for each $\eps\in(0,\tfrac 12)$, where $L_{u,\eps}(t)$ is the length of the shortest interval 
containing all $x\in\bbR$ with $u(t,x)\in[\eps,1-\eps]$.  This last property is called {\it bounded width} in \cite{ZlaGenfronts}.    The definition of transition fronts was first given in some specialized cases by Shen \cite{Shen} and Matano  \cite{Matano}, and then in a very general setting (including several dimensions) by Berestycki, Hamel in their fundamental papers \cite{BH2,BH3}.  Existence of  transition fronts in one-dimensional disordered media (but no long term asymptotics of general solutions) was also proved  for  bistable reactions which are small perturbations of homogeneous ones by Vakulenko, Volpert \cite{VakVol}, for KPP reactions which are (spatially) decaying  perturbations of homogeneous ones by Nolen, Roquejoffre, Ryzhik, Zlato\v s \cite{NRRZ}, for general KPP reactions by Zlato\v s \cite{ZlaInhomog}, and for general monostable reactions which are close to KPP reactions by Tao, Zhu, Zlato\v s \cite{TZZ}. 
We also mention results proving existence of a critical front, once some transition front exists,  by Shen \cite{Shen} and Nadin \cite{Nadin}.


While it is again not possible to improve the estimates on the length of the interval on which the transition between $u\sim 0$ and $u\sim 1$ is {\it guaranteed to happen}  (which again grows as $(c_1-c_0)t$ in time if $f_0\le f\le f_1$), bounded width of transition fronts and the convergence-to-fronts results in \cite{MNRR,ZlaGenfronts} show that for ignition reactions and typical solutions, the transition does occur within intervals whose lengths are uniformly bounded in time.  Moreover, this bound depends on some bounds on the reaction but neither on the reaction itself, nor on the initial condition.  In particular, this shows that after a uniform-in-$(f,u,t)$ scaling in space, each such solution becomes, in some sense, close to the {\it characteristic function of a time-dependent  spatial interval.}  Moreover, the convergence-to-fronts results can be  used to show that this interval grows in (equally scaled) time with speed within $[c_0,c_1]$. 

Experience from observation of natural processes modeled by \eqref{1.1} suggests that this picture should be also valid for  media in several spatial dimensions.  For instance, aerial footage of forest fires spreading through (spatially inhomogeneous) regions demonstrates variously curved but usually relatively narrow lines of fire separating burned ($u\sim 1$) and unburned ($u\sim 0$) areas.  However, results demonstrating such phenomena for typical solutions of \eqref{1.1} with {\it general inhomogeneous} reactions have not been previously obtained in dimensions $d\ge 2$.

It turns out that the multi-dimensional case is much more involved in this respect. 
The first issue is that it is not completely obvious how to extend the definition of bounded width of  solutions of \eqref{1.1}, \eqref{1.2} to the multi-dimensional setting, and some first instincts may lead to unsatisfactory results for general non-periodic media (see the discussion below).   
The extension we introduce here is motivated by the Berestycki-Hamel definition of transition fronts (which are entire solutions of \eqref{1.1}) in several dimensions \cite{BH2,BH3}.  However, there are a couple of differences, and
we discuss the relationship of the two concepts after stating our main results, at the end of the next section.

For solutions $u\in[0,1]$ of the Cauchy problem for \eqref{1.1}, our extension is as follows.  We let $\Omega_{u,\eps}(t):=\{x\in\bbR^d\,|\, u(t,x)\ge \eps\}$ be the $\eps$-super-level set of $u$ at time $t$ and define the {\it width of the transition zone} of $u$ from $\eps$ to $1-\eps$ (or to be more precise, from $[\eps,1-\eps)$ to $1-\eps$) to be
\beq \lb{1.3xx}
L_{u,\eps}(t) := \inf \left\{L>0 \,\big|\, \Omega_{u,\eps}(t)\subseteq B_L \left(\Omega_{u,1-\eps}(t)\right) \right\}
\eeq
for $\eps\in(0,\tfrac 12)$, with $B_r(A):=\bigcup_{x\in A} B_r(x)$ and $\inf\emptyset=\infty$. Notice that this is precisely the {\it Hausdorff distance} of the sets $\Omega_{u,\eps}(t)$ and $\Omega_{u,1-\eps}(t)$. We now say that $u$ has {\it bounded width} if
\beq \lb{1.4xx}
\lim_{t\to\infty} L_{u,\eps}(t) < \infty
\eeq
for each $\eps\in(0,\tfrac 12)$.  The limit is necessary in \eqref{1.4xx} because $\sup_{x} u(t,x)<1$ may hold for each $t$ (e.g., for spark-like solutions); it will be replaced by $\sup_{t\in\bbR}$ for entire solutions (see Definition~\ref{D.1.1}).
So by \eqref{1.4xx}, $u\in[0,1]$ has bounded width if and only if for any $0<\eps<\eps'<1$, super-level sets $\Omega_{u,\eps'}(t)\subseteq \Omega_{u,\eps}(t)$ have uniformly (in large time) bounded Hausdorff distance.  In particular, each of them is uniformly in time close to  $\Omega_{u,1/2}(t)$.

One may wonder why do we not treat the equilibria 0 and 1 in a symmetric fashion and include a similar definition involving the sub-level sets of $u$ as well. (We do so in \eqref{1.3a} and the related definition of {\it doubly-bounded width}, which is  for $u\in[0,1]$ equivalent to uniformly bounded Hausdorff distance of the {\it boundaries} $\partial\Omega_{u,\eps}(t)$ and $\partial\Omega_{u,\eps'}(t)$.)
While adding this requirement works well in one dimension \cite{MRS,MNRR,NolRyz,ZlaGenfronts}, it turns out to be too restrictive for the  treatment of sufficiently general (not necessarily periodic) reactions and solutions of the Cauchy problem \eqref{1.1}, \eqref{1.2} in dimensions $d\ge 2$.  This is due to $u\equiv 1$ being the invading equilibrium
and $u\equiv 0$ the invaded one, coupled with the possibility of arbitrary variations in the medium on arbitrarily large scales in two or more unbounded dimensions.  We discuss the issues involved after stating Theorem \ref{T.1.0} for reactions with a constant ignition temperature $\tht_0$, which is a special case of Theorem \ref{T.1.2} below.

Even with a suitable definition at hand, our results proving bounded widths of typical solutions, under quite general and physically natural {\it qualitative} hypotheses on the reaction, only hold in dimensions $d\le 3$.  Surprisingly, such results are in fact false for $d\ge 4$ without the addition of further {\it quantitative} hypotheses (e.g., $f$ being sufficiently close to a homogeneous reaction; 
see Remark 1 after Definition \ref{D.1.10} below).  The reason is that in $d\ge 4$, even in the constant ignition temperature case, intermediate values of $u$ may spread faster than values close to 1 (see the discussion after Definition \ref{D.1.1a} and Remark 1 after Theorem \ref{T.1.2} for the general ignition case).  This turns out to be related to the possibility of existence of non-constant stationary solutions $p\in(0,1)$ of \eqref{1.1} in $\bbR^{d-1}$
(see Section \ref{S8}).

\begin{theorem} \lb{T.1.0}
Let $f$ be Lipschitz (with constant $K$) and non-increasing in $u$ on $[1-\tht,1]$ for each $x\in\bbR^d$ (where $\tht>0$).   Assume that $f_0(u)\le f(x,u)\le f_1(u)$ for all $(x,u)\in \bbR^d\times[0,1]$, with $f_0,f_1:[0,1]\to[0,\infty)$ vanishing on $[0,\tht_0]\cup\{1\}$ and positive on $(\tht_0,1)$ (where $\tht_0>0$).  Let  $c_0$ and $c_1$ be the spreading speeds of $f_0$ and $f_1$.

(i) If $d\le 3$,
then the solution of \eqref{1.1}, \eqref{1.2} with any spark-like \eqref{1.5} or front-like \eqref{1.6} initial data $u_0\in[0,1]$ has bounded width \eqref{1.4xx}. In fact, for any $\eps\in(0,\tfrac 12)$ there are  $\ell_\eps, T_\eps$ such that $\sup_{t\ge T_\eps} L_{u,\eps}(t)\le \ell_\eps$, with $\ell_\eps$ depending only on $\eps,f_0,K$ ($T_\eps$ also depends on $u_0$).  Finally, $u$ propagates with global mean speed in $[c_0,c_1]$ in the sense of  Definition \ref{D.1.1b} below, with  $\tau_{\eps,\delta}$ in that definition depending only on  $\eps, f_0,K,\delta,f_1$.

(ii) If  $d\ge 4$, then there are $f,f_0,f_1$ as above such that no solution of \eqref{1.1}, \eqref{1.2} with compactly supported $u_0\in[0,1]$  
 and satisfying $\limsup_{t\to\infty}\|u(t,\cdot)\|_\infty>0$ has bounded width. 
\end{theorem}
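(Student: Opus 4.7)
I would proceed by contradiction. Suppose that for some $\eps\in(0,\tfrac12)$ there exist sequences $t_n\to\infty$ and $x_n\in\Omega_{u,\eps}(t_n)$ with $L_n:=\dist(x_n,\Omega_{u,1-\eps}(t_n))\to\infty$. Set $u_n(t,y):=u(t_n+t,x_n+y)$ and $f_n(y,w):=f(x_n+y,w)$. Parabolic regularity together with the uniform Lipschitz bound $K$, Arzel\`a--Ascoli, and a diagonal extraction produce a subsequence along which $u_n\to v$ locally uniformly on $\mathbb R\times\mathbb R^d$ and $f_n\to f^*$, where $v$ solves $v_t=\Delta v+f^*(y,v)$ with $f_0\le f^*\le f_1$. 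By construction $v(0,0)\ge\eps$, and since $x_n+y\notin\Omega_{u,1-\eps}(t_n+t)$ on balls of radius tending to $\infty$, $v\le 1-\eps$ everywhere on $\mathbb R\times\mathbb R^d$. Pushing $t\to\infty$ and using $f^*\ge f_0$ to prevent decay to $0$ from the ignition bump at the origin, an $\omega$-limit argument produces a nonconstant bounded stationary solution $p:\mathbb R^d\to[0,1-\eps]$ of $\Delta p+\tilde f(y,p)=0$ with $p\ge\tht_0$ on some set. A further translation along the direction pointing from $x_n$ to its nearest point of $\Omega_{u,1-\eps}(t_n)$, combined with the near-planar geometry of the ignited region implied by spreading from spark-like/front-like data, should reduce $p$ to a function of only the $d-1$ coordinates transverse to that direction. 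The finishing blow is a Liouville-type statement in dimension $d-1\le 2$: any bounded stationary solution of $\Delta q+g(q)=0$ on $\mathbb R^{d-1}$ with $g\ge f_0$ and $q\ge\tht_0$ on a sufficiently large set must reach $1$ (since the $f_0$-dynamics would otherwise invade it), contradicting $q\le 1-\eps$. The uniformity of $\ell_\eps$ in $(\eps,f_0,K)$ is read off the extraction, while the global mean-speed claim in $[c_0,c_1]$ is a separate comparison with the $f_0$- and $f_1$-equations.

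\textbf{Part (ii): counter-example for $d\ge 4$.} For $d-1\ge 3$ I would produce, via a radial ODE construction for a suitable Lipschitz $g$ of ignition type (the Pohozaev-type obstruction to nonconstant bounded stationary solutions is vacuous in dimension $\ge 3$, so positive solutions with range in a prescribed interval $(\tht_0,1-\eta)$ can be found by shooting and tuning $g$), a nonconstant bounded stationary $q:\mathbb R^{d-1}\to(\tht_0,1-\eta)$ of $\Delta q+g(q)=0$. Its trivial extension $\bar q(x_1,x_\perp):=q(x_\perp)$ is then a stationary solution of the full PDE with reaction $g$. Now define $f$ by modifying $g$ on a bounded set so that $f_0\le f\le f_1$ holds globally for appropriate $f_0,f_1$ of the required form. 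By the comparison principle, any solution of \eqref{1.1}, \eqref{1.2} from compactly supported $u_0$ with $\limsup_{t\to\infty}\|u(t,\cdot)\|_\infty>0$ eventually dominates a rescaled bump large enough to lift $u$ above $\bar q$ on an appropriate region, and once this occurs $u(t,\cdot)\ge\bar q$ persists for all later $t$. Consequently $\Omega_{u,\eps}(t)$ contains the unbounded set $\{\bar q\ge\eps\}$ for $\eps\le\tht_0$; but $\bar q\le 1-\eta$, so no point of $\Omega_{u,1-\eta/2}(t)$ can lie within bounded distance of all of $\{\bar q\ge\eps\}$, forcing $L_{u,\eta/2}(t)\to\infty$.

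\textbf{Main obstacle.} The most delicate ingredient in Part (i) is Step~3, the reduction of the $d$-dimensional stationary limit $p$ to a function of $d-1$ variables: one must show that, after translating into the direction of the ``escape'' $x_n\to$ nearest ignited point, the level sets of the ignited region look asymptotically planar on the relevant scale, so that the limit inherits a translation invariance. This has to be combined with a tight Liouville-type classification on $\mathbb R^{d-1}$ that is sharp enough to exclude all nonconstant bounded solutions taking a value $\ge\tht_0$ on a sufficiently large set, not merely to rule out constants. In Part (ii), the main difficulty is engineering the stationary $q$ to have range exactly inside $(\tht_0,1-\eta)$ while still allowing $f\ge f_0$ with $f_0$ of the stated form; this forces a careful choice of $g$ and a localized modification of $f$ compatible with the global bounds.
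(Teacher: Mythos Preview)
Your contradiction--compactness outline contains two genuine gaps, and the paper's route is essentially different.

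First, the claim that the extracted entire solution $v$ satisfies $v\le 1-\eps$ on all of $\mathbb R\times\mathbb R^d$ is not justified: you only know $\dist(x_n,\Omega_{u,1-\eps}(t_n))\to\infty$ at the single time $t_n$, and for $t<0$ (i.e.\ times before $t_n$) there is no a priori reason the $1-\eps$ level set was far from $x_n$. Second, and more seriously, the reduction of the stationary limit $p$ to a function of $d-1$ variables is unfounded. The ``near-planar geometry'' of the reaction zone is precisely what you are trying to prove; for general inhomogeneous $f$ the level sets of $u$ can be highly irregular (this is discussed at length in the introduction), and there is no mechanism forcing the limit to inherit a translation invariance. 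You correctly flag this as the main obstacle, but you do not supply the missing idea, and the paper does not supply it either --- because the paper does \emph{not} reduce to $\mathbb R^{d-1}$ in the positive result.

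Instead, the paper works entirely in $\mathbb R^d$. It introduces, for each $y$, the distance $Z_y(t)$ from $y$ to the level set $\{u(t,\cdot)\ge 1-\eps_0\}$ and a quantity $Y^h_y(t)$ measuring how far out the best exponential upper bound $h+\gamma\psi(|\cdot-y|)$ on $u$ is centered. A direct comparison shows $Z_y$ decreases with speed at least some $c_Z<c_0$, while $Y^h_y$ decreases with speed at most some $c_Y<c_Z$ \emph{provided} $u$ stays below $\alpha_f(x)$ near $y$. The crux (Lemma~\ref{L.3.2}) is that when $Z_y\gg Y^h_y$ this proviso is essentially satisfied; for $d\le 2$ this uses the full Liouville statement (Lemma~\ref{L.2.2}: bounded $\Delta v+f(x,v)=0$ on $\mathbb R^d$ with $f\ge 0$ forces $v$ constant), while for $d=3$ Lemma~\ref{L.2.2} only gives the integral bound $\int|x|^{-1}f(x,v(x))\,dx\le C$, and a separate, considerably harder argument via the Feynman--Kac formula and a Brownian-motion hitting estimate (Lemma~\ref{L.3.4}) is needed. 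The uniform bound on $Z_y-Y^h_y$ then yields bounded width; the $u_0$-independence of $\ell_\eps$ comes afterward from a second compactness step using Theorem~\ref{T.1.5}(ii).

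\textbf{Part (ii).} You have the right starting point --- a nonconstant stationary solution $q$ on $\mathbb R^{d-1}$ with range in $(\tht_0,1-\eta)$, extended trivially to $\bar q(x_1,x_\perp)=q(x_\perp)$ --- but your conclusion is wrong. Since $\bar q$ is independent of $x_1$ and bounded below by a positive constant along the $x_1$-axis, no solution from compactly supported $u_0$ ever satisfies $u(t,\cdot)\ge\bar q$ globally (such $u$ decays at spatial infinity for every fixed $t$). The paper's mechanism is instead a \emph{speed mismatch}: with $p=\bar q$, it shows (Lemma~\ref{L.9.1}) that the transition $p\to 1$ propagates in the $e_1$-direction with speed at most some $c$ depending only on $f_0$, while by choosing $f$ large on $(\tfrac12,p(x))$ one forces the transition $0\to p$ to propagate with speed strictly exceeding $c$. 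Hence $u$ is near $p$ on a slab of linearly growing width in $x_1$, which kills bounded width, the $u_t$-lower bound, and the speed claim simultaneously.
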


{\it Remarks.} 1. Hence in dimensions $d\le 3$, each typical solution $u$ eventually becomes uniformly close (in the sense of Hausdorff distance of $\eps$-super-level sets for each $\eps\in(0,1)$)
 to the  characteristic function of  $\Omega_{u,1/2}(t)$, and the latter grows  with speed (averaged over long enough time intervals) essentially in $[c_0,c_1]$.
 So after a uniform-in-$(f,u,t)$ space-time scaling, typical solutions look like Figure~1, with the shaded area  expanding at speeds within $[c_0,c_1]$.  Since this also shows that an observer at any point $x\in\bbR^d$ at which $u(t,x)=\eps$ (for a large enough $t$) will see transition to the value $1-\eps$ within a uniformly bounded time interval,  this means that the {\it reaction zone} (where $u\sim \tfrac 12$)  is uniformly bounded in both space and time.
\smallskip

2. One can use \cite[Theorem 1.11]{BH3}  to prove (i) for homogeneous ignition reactions (see  \cite{Jones, Roussier} for bistable ones), and also for $x$-periodic ignition reactions and front-like solutions. 
However, 
besides disordered media, (i) is new for $x$-periodic media and spark-like solutions as well.  In fact, some of our results are new even for homogeneous media (e.g., Theorem \ref{T.1.3}).
\smallskip

3.  We will generalize Theorem \ref{T.1.0}
in several ways.  This will include a proof that solutions eventually increase in time on each interval of values $[\eps,1-\eps]$, extensions to other types of reactions (ignition with non-constant ignition temperature, monostable, bistable, and their mixtures) and to transitions between general equilibria $u^-<u^+$, as well as a treatment of more general types of solutions (trapped between time shifts of general time-increasing solutions, and not necessarily satisfying $u^-\le u\le u^+$).
\smallskip

The reasons for the new complications for $d\ge 2$, described above, are not just technical but stem from ``real world'' considerations in the case of {\it two or more unbounded dimensions.} (Note that the result in \cite{ZlaGenfronts} extends to the quasi-one-dimensional case of infinite cylinders in $\bbR^d$. The results described in Remark 2 after Theorem \ref{T.1.0} also have a (quasi-)one-dimensional nature due to either radial symmetry or periodicity.)  
%
%

First, one might think that the reaction zone will always coincide  with a bounded neighborhood of some time-dependent hypersurface.
This turns out to not be the case  in general in dimensions $d\ge 2$, since without some order in the medium (such as periodicity)  a fire will not always spread at roughly the same speed everywhere, so the initial spherical or hyperplanar shape of the reaction zone can become very distorted.  In fact, areas of slowly burning material in the medium may cause it to propagate {\it around them faster than through them,} resulting in pockets of temporarily unburned material behind  the leading edge of the fire.  
See  Figure \ref{fig} for an illustration of this phenomenon, and the proof of Theorem \ref{T.1.6}(ii) for an extreme example of it. 
While these pockets will eventually burn up, variations in the medium can create arbitrarily many or  even infinitely many of them (the latter for front-like solutions, although not spark-like) at a given (large) time, and they can be arbitrarily large and occur arbitrarily far behind the leading edge as $t\to\infty$.  As a result of this potentially complicated geometry of the reaction zones of general solutions of \eqref{1.1}, our definition of bounded width includes no requirements on the shape of the sets $\Omega_{u,\eps}(t)$ or their boundaries.

\begin{figure}[ht] 
\centering \scalebox{0.18}{\includegraphics{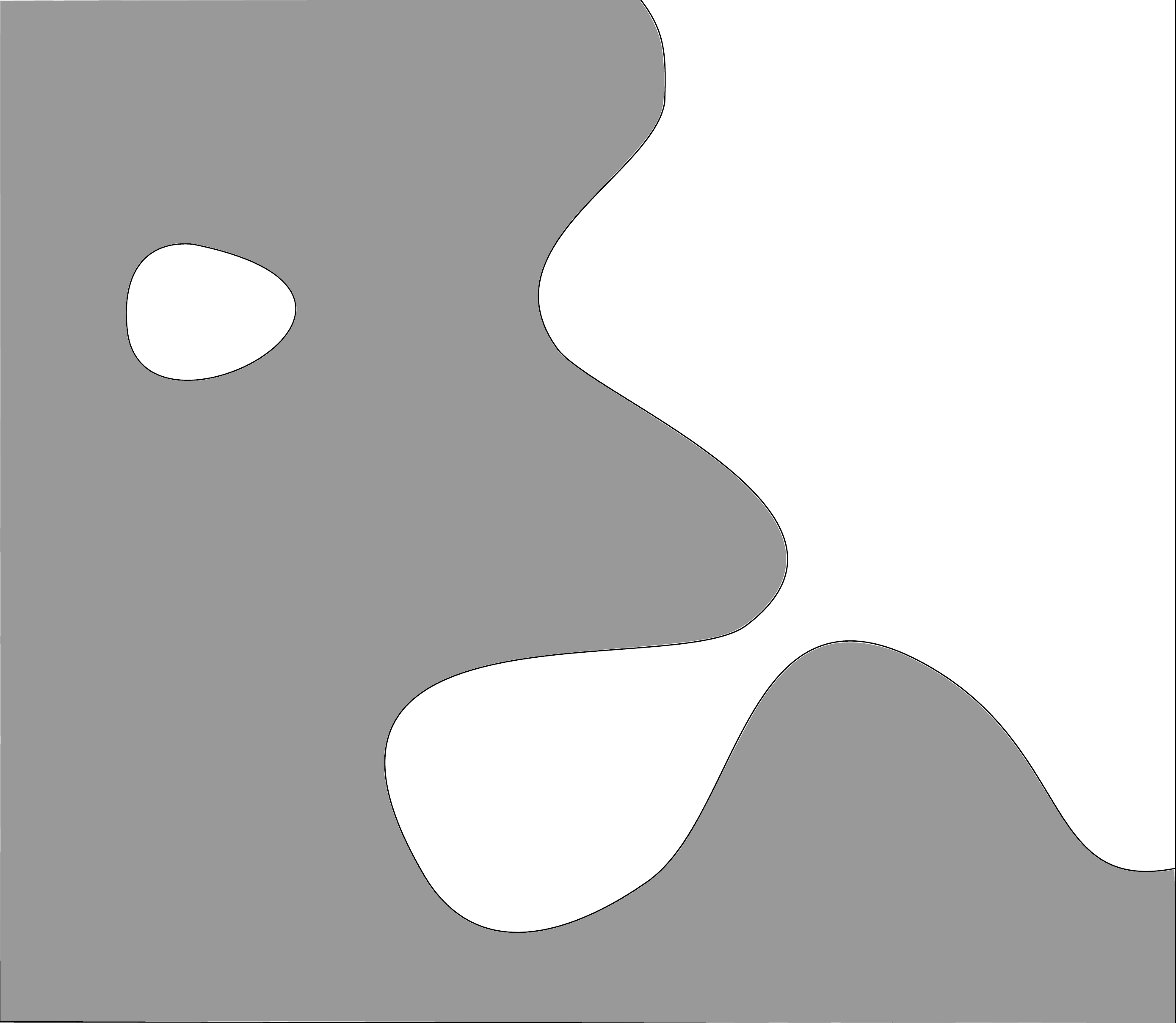}}
 \caption{On the shaded region $u\sim 1$, and on the white region $u\sim 0$.} 
 \label{fig} 
 \end{figure}
 
It is worth noting that while one might think that this issue can only arise if the medium has large variations in combustivity, this is not the case either.  In fact, as long as $f_0,f_1$  satisfy $c_0<c_1$, it is always possible to construct $f$ such that $f_0\le f\le f_1$ and the above situation (arbitrarily many unburned pockets which can be arbitrarily large and arbitrarily far behind the leading edge) does indeed occur for typical solutions $u$.  In particular, it happens almost surely for stationary ergodic media with short range correlations.

Another critical issue, related to this, arises from the consideration of what happens to such an unburned pocket far behind the leading edge of the fire.  It ``burns in'' from its perimeter and at the time  when it is just about to be burned up (say when the minimum of $u$ on it is $\tfrac 3{4}$), the nearest point where $u$ is close to 0 (say $\le \tfrac 1{4}$) may be very far from the pocket.  This shows that for general inhomogeneous media in dimensions $d\ge 2$, one may have {\it unbounded-in-time width of the transition zone from $u\sim 1$ to $u\sim 0$.} 

On the other hand, in the situation studied here when the invaded state $u= 0$ is either stable or relatively weakly unstable (the invading state $u=1$ clearly must be stable), pockets of burned material cannot form arbitrarily far ``ahead'' of 
the leading edge, unlike pockets of yet-unburned material ``behind'' the leading edge. (This is very different from the KPP case where $u=0$ is strongly unstable; see \cite{NRRZ} for examples of such media in one dimension, and the discussion after Definition \ref{D.1.1} for what may be done in that case.)   This means that typical solutions will be {\it pushed} (as opposed to {\it pulled}),  their propagation being driven by intermediate (rather than small) values of $u$. Thus one can still hope to see a {\it uniformly-in-time bounded width of the transition zone from $u\sim 0$ to $u\sim 1$}.  This lack of symmetry between the spatial  transitions $1\arrow 0$ and $0\arrow 1$ is why our definition of bounded width involves the Hausdorff distance of the super-level sets of $u$ but not of the sub-level sets (or of their boundaries).

Let us conclude this introduction with the discussion of  convergence of typical solutions of the Cauchy problem to entire solutions (such as transition fronts) of \eqref{1.1} in several dimensions.  In contrast to one dimension,  it is unlikely that any  general enough such results  can be obtained for disordered media.  Firstly, the disorder may result in reaction zones of solutions neither moving in a particular direction nor attaining a particular geometric shape.  Secondly, in Theorem \ref{T.1.6}(ii) we construct media where any entire solution with uniformly-in-time bounded width of the transition zone from $u\sim 0$ to $u\sim 1$ satisfies $\lim_{t\to\infty} \inf_{x} u(t,x)=1$ (while typical solutions of the Cauchy problem have $\inf_{x} u(t,x)=0$ for each $t\in\bbR$).

And thirdly, if such a result existed, one should also expect the following Liouville-type claim to hold:  If a solution $u$ is initially between two time translates of a front-like (or spark-like) solution $v$  (and so by the comparison principle, $v(\cdot,\cdot)\le u (T+\cdot,\cdot)\le v(2T+\cdot,\cdot)$ for some $T\ge 0$), then for any $\eps>0$ there is $T_\eps>0$ such that for any $(t,x)\in[T_\eps,\infty)\times \bbR^d$, 
\beq \lb{1.00}
\|u(t,\cdot)-v(t+\tau_{t,x},\cdot)\|_{L^\infty({B_{1/\eps}(x)})}<\eps
\eeq
for some $|\tau_{t,x}|\le T$.
That is,  $u$ should locally look more and more like a (possibly $(t,x)$-dependent) time translate of $v$ as time progresses. Somewhat surprisingly, this claim is false in general  in dimensions $d\ge 2$, even if $v$ is required to be an entire solution.  This is for non-pathological reasons and we discuss a counter-example in Section \ref{S7}.
 
Nevertheless, despite the likely lack of sufficiently general results on convergence to transition fronts or other entire solutions in general disordered media, these solutions will still play an important role in our analysis.  This is because one can use parabolic regularity to build entire solutions from those of the Cauchy problem sampled near any sequence of points $(t_n,x_n)$ with $t_n\to\infty$, so results for the former can be used in the analysis of the latter.

Finally, let us mention that our results can be extended to some more general PDEs, with $x$-dependent second order terms as well as first order terms with divergence-free coefficients.  This will be done elsewhere.  


\section{The Definition of Bounded Width and the Main Results} \lb{S1a}

Let us now turn to our main results. We will first assume that $u\in[0,1]$ and $f\ge 0$ is Lipschitz and bounded below by some homogeneous {\it pure ignition} reaction $f_0$.  (Later we will consider more general situations.) We will thus assume the following.

\medskip
{\it Hypothesis (H):  $f$ is Lipschitz with constant $K\ge 1$ and 
\beq\lb{0.ac}
f(x,0)=f(x,1)=0 \qquad \text{for $x\in\bbR^d$.}  
\eeq
There is also $\tht_0\in (0,1)$ and a Lipshitz function $f_0:[0,1]\to[0,\infty)$  with $f_0(u)=0$ for $u\in[0,\tht_0]\cup\{1\}$ and $f_0(u)>0$ for $u\in(\tht_0,1)$  such that 
\[
f(x,u)\ge f_0(u) \qquad \text{for $(x,u)\in \bbR^d\times [0,1]$}.
\]
Finally, there is $\tht\in[0,\tfrac 13]$ such that $f(x,u)=0$ for $(x,u)\in \bbR^d\times[0,\tht]$ and $f$ is non-increasing in $u$ on $[1-\tht,1]$ for each $x\in\bbR^d$.  If such $\tht>0$ exists, then $f$ is an {\it ignition reaction}, otherwise (in which case the last hypothesis is vacuous) $f$ is a {\it monostable reaction}.}
\medskip

{\it Remarks.}  1. The definition of ignition reactions sometimes also includes existence of $\til\tht(x)\in[\tht,\tht_0]$ such that $f(x,u)>0$ if and only if $u\in(\til\tht(x),1)$, which we call the {\it pure ignition} case.  We will not need this stronger hypothesis here.  
\smallskip

2. While the requirement of $f$ being non-increasing in $u$ on $[1-\tht,1]$ is not always included in the definition of ignition reactions, many results for them need to assume it. This includes our main results, although the hypothesis is not needed for their slightly weaker versions (specifically, not including those statements which use Theorem \ref{T.1.5}(ii) below). 
Notice also that we can assume without loss that $f_0$ is non-increasing on $[1-\delta,1]$ for some $\delta>0$ because this can be achieved after replacing $f_0(u)$ by $\min_{v\in[1-\delta,u]} f_0(v)$.  Thus $f_0$ is itself an ignition reaction according to the above definition.
\smallskip

For a set $A\subseteq\bbR^d$ and $r>0$, we let $B_r(A):=\bigcup_{x\in A} B_r(x)$ (for $r\le 0$ we define $B_r(A):=\emptyset$).
If  $u:(t_0,\infty)\times\bbR^d\to[0,1]$ and  $\eps\in[0,1]$, we let $\Omega_{u,\eps}(t):=\{x\in\bbR^d\,|\, u(t,x)\ge \eps\}$ for $t>t_0$. 
For $\eps\in(0,\tfrac 12)$, the {\it width of the transition zone} of $u$ from $\eps$ to $1-\eps$ at time $t>t_0$ is
\beq \lb{1.3}
L_{u,\eps}(t) := \inf \left\{L>0 \,\big|\, \Omega_{u,\eps}(t)\subseteq B_L \left(\Omega_{u,1-\eps}(t)\right) \right\},
\eeq
with the usual convention $\inf\emptyset=\infty$.  
For $\eps\in(\tfrac 12,1)$ the corresponding width is
\beq \lb{1.3a}
L_{u,\eps}(t) := \inf \left\{L>0 \,\big|\, \bbR^d\setminus 
\Omega_{u,\eps}(t) \subseteq B_L \left(\bbR^d\setminus
\Omega_{u,1-\eps}(t) \right) \right\}.
\eeq
Finally,  for $\eps\in(0,\tfrac 12)$ we also define the minimal length of transition from $(\eps,1-\eps)$ to either $\eps$ or $1-\eps$ to be
\beq \lb{1.3aa}
J_{u,\eps}(t) := \inf \left\{L>0 \,\big|\, \bbR^d = B_L \left( \Omega_{u,1-\eps}(t) \cup \left[\bbR^d\setminus
\Omega_{u,\eps}(t) \right] \right) \right\} .
\eeq
For the above to be perfectly symmetric, we could replace $\bbR^d\setminus\Omega_{u,\eps}(t)$ by $\bbR^d\setminus\bigcup_{\eps'>1-\eps}\Omega_{u,\eps'}(t)$, but as we mentioned in the introduction,  \eqref{1.3a} and \eqref{1.3aa} will not play a major role here.


\begin{definition} \lb{D.1.1}
Let $u:(t_0,\infty)\times\bbR^d\to[0,1]$ be a solution of \eqref{1.1} with $t_0\in[-\infty,\infty)$.  We say that $u$ has a {\it bounded width (with respect to 0 and 1)} if  for any $\eps\in(0,\tfrac 12)$ we have
\beq \lb{1.4}
L^{u,\eps} := \lim_{T\to\infty} \sup_{t>t_0+T} L_{u,\eps}(t) < \infty.
\eeq
We say that $u$ has a {\it doubly-bounded width} if \eqref{1.4} holds for any $\eps\in(0,\tfrac 12)\cup (\tfrac 12,1)$.
And we say that $u$ has a {\it semi-bounded width}  if  for any $\eps\in(0,\tfrac 12)$  we have
\beq \lb{1.4f}
J^{u,\eps} := \lim_{T\to\infty} \sup_{t>t_0+T} J_{u,\eps}(t) < \infty.
\eeq

\end{definition}

{\it Remarks.} 1. Notice that if $t_0=-\infty$, then $L^{u,\eps} =  \sup_{t\in\bbR} L_{u,\eps}(t)$ and $J^{u,\eps} =  \sup_{t\in\bbR} J_{u,\eps}(t)$.  For $t_0>-\infty$, however, these quantities are defined only asymptotically.  One reason for this is that if $\sup_{x\in\bbR^d} u_0(x)<1$, then $\sup_{x\in\bbR^d} u(t,x)<1$ for any $t>t_0$.  Thus for any $\eps\in(0,\tfrac 12)$, $L_{u,\eps}(t)$ will equal $\infty$ up to some time $t_\eps$ ($\to\infty$ as $\eps\to 0$).
\smallskip

2.  We trivially have that $L^{u,\eps}$ is non-increasing in  $\eps\in(0,\tfrac 12)$ (as is $J^{u,\eps}$) and non-decreasing in $\eps\in(\tfrac 12,1)$, so in fact the definition  only needs to involve $\eps$ close to 0 and 1.
\smallskip



While the definition of bounded width involves $\eps\in(0,\tfrac 12)$, we do not make one involving only $\eps\in(\tfrac 12,1)$.  This lack of symmetry was explained in the introduction, 
and is due to the possibility of existence of unburned pockets with $u\sim 0$ behind the leading edge of the reaction zone.  
Hence, typical solutions $u$ in general disordered media may have $L^{u,\eps}=\infty$  for  $\eps\in(\tfrac 12,1)$.  In particular, they would not have doubly-bounded widths, but may still have bounded widths, at least when $u\equiv 0$ is a stable equilibrium.

If the equilibrium $u\equiv 0$ is strongly unstable (such as for KPP $f$),  bounded width is also too much to hope for in some situations, even when $d=1$.  Indeed, an easy extension of the construction from \cite{NRRZ} yields media where burned pockets with $u\sim 1$ can form arbitrarily far ahead of the leading edge of the reaction zone.  While we do not study this case here,  we introduce the concept of semi-bounded width in Definition \ref{D.1.1} because it is likely to be relevant in such situations.


We next define the propagation speed of (the  reaction zone of) $u$ (cf. \cite{BH3}).

\begin{definition} \lb{D.1.1b}
Let  $u:(t_0,\infty)\times\bbR^d\to[0,1]$ be a solution of \eqref{1.1} with $t_0\in[-\infty,\infty)$,
and let $0<c\le c'\le\infty$.
We say that {\it $u$  propagates with global mean speed in $[c,c']$}  if for any $\eps\in(0,\tfrac 12)$ and $\delta>0$ there are $T_{\eps,\delta},\tau_{\eps,\delta}<\infty$ such that 
\beq\lb{1.3g}
B_{(c-\delta)\tau} \left(\Omega_{u,\eps}(t) \right)  \subseteq \Omega_{u,1-\eps}(t+\tau)
\qquad\text{and}\qquad
\Omega_{u,\eps}(t+\tau) \subseteq B_{(c'+\delta)\tau} \left(\Omega_{u,1-\eps}(t) \right)
\eeq
whenever $t> t_0+T_{\eps,\delta}$ and $\tau\ge \tau_{\eps,\delta}$.  If any such $0<c\le c'\le\infty$ exist, we also say that {\it $u$  propagates with a positive global mean speed.}
\end{definition}

{\it Remarks.}  1. If $t_0=-\infty$, then obviously $t\in\bbR$ above is arbitrary.  
\smallskip

2. Notice that the definition would be unchanged if we took $T_{\eps,\delta}=\tau_{\eps,\delta}$.  However, this formulation will be more convenient for us because we will show that under certain conditions, $\tau_{\eps,\delta}$ (but not necessarily $T_{\eps,\delta}$) will be independent of $f,u$.
\smallskip

We now let $c_0$ be the front/spreading speed associated with the homogeneous reaction $f_0$. That is, $c_0$ is the {\it unique} value such that \eqref{1.1} for $d=1$ and with $f_0(u)$ in place of $f(x,u)$ has a  traveling front solution $u(t,x)=U(x-c_0t)$ with $\lim_{s\to -\infty}U(s)=1$ and $\lim_{s\to \infty}U(s)=0$.  

We also   let $f_1:[0,1]\to[0,\infty)$ be any Lipschitz function with constant $K$ such that
\beq \lb{1.4e}
f(x,u)\le f_1(u) \qquad \text{for $(x,u)\in \bbR^d\times [0,1]$},
\eeq
which is also pure ignition if $\tht>0$ in (H) and pure monostable (i.e. $f(0)=f(1)=0$ and $f(u)>0$ for $u\in(0,1)$) otherwise.
For instance, we could pick $f_1(u):=\sup_{x\in\bbR^d} f(x,u)$, if this function is pure ignition/monostable.  We also let $c_1$ be the front/spreading speed associated with $f_1$ (which is again the unique traveling front speed if $f_1$ is ignition, and it is the {\it minimal} traveling front speed if $f_1$ is monostable).  
The existence of $c_0, c_1$ is well known, as well as that $f_0(u)\le f_1(u)\le Ku$ implies $c_0\le c_1\le 2\sqrt K$.  


Our main results below say that under appropriate (quite general and physically relevant) qualitative hypotheses on the reaction,  typical solutions of \eqref{1.1} have bounded widths and (their reaction zones) propagate with global mean speeds in the interval $[c_0,c_1]$.  They also eventually grow in time on any closed interval of values of $u$ contained in $(0,1)$.  Specifically,  we will prove the following conclusion for typical solutions $u$.

\medskip
{\it Conclusion (C):  For any $\eps\in(0,\tfrac 12)$, there are $\ell_\eps,m_\eps, T_\eps\in(0,\infty)$  such that  
\beq \lb{1.7}
\sup_{t> t_0+T_\eps} L_{u,\eps}(t)\le \ell_\eps \qquad\text{and}\qquad
\inf_{\substack{(t,x)\in(t_0+T_\eps,\infty) \times \bbR^d \\ u(t,x)\in[\eps,1-\eps]}} u_t(t,x) \ge m_\eps.
\eeq
In particular, $L^{u,\eps}\le \ell_\eps$, so $u$ has a bounded width.
Moreover, if a pure ignition $f_1$
 satisfies \eqref{1.4e}, then $u$ propagates with global mean speed in $[c_0,c_1]$.}
 \medskip

Moreover, $\ell_\eps,m_\eps$ as well as $\tau_{\eps,\delta}$ from Definition \ref{D.1.1b} will depend on some uniform bounds on the reaction, but neither on the reaction itself nor on the solution.  That is, the spatial scale on which the transition from $u\sim 0$ to $u\sim 1$ happens as well as the temporal scale on which the global mean speed of (the reaction zone of) $u$ is observed to be in $[c_0,c_1]$, will become independent of $f,u$ after an initial time interval.

Note that such expanding sets may also be weak solutions of appropriate Hamilton-Jacobi equations.
Connection of the two types of PDE is well-established in the homogenization theory for various types of media (e.g., periodic or  stationary ergodic), see for instance \cite{Freidlin, LioSou}.  It will be explored, via our results, for general disordered media elsewhere.

\bigskip
\noindent
{\bf Solutions of the Cauchy Problem with Bounded Widths}
\smallskip
\smallskip

We will first show that  for ignition reactions, (C) holds in dimensions $d\le 3$, but not in dimensions $d\ge 4$ (under the same qualitative hypotheses).

A crucial additional (and necessary)  hypothesis, which is automatically satisfied in the case of constant ignition temperature $\tht_0$, relates to the following definition (see Remarks 1 and 2 below).  It says that if for any  $x\in\bbR^d$ we increase  $u$ from 0 to 1, once $f(x,u)$ becomes large enough, it cannot become arbitrarily small until $u\sim 1$, as illustrated in Figure \ref{fig2}.

\begin{figure}[ht] 
\centering \scalebox{0.29}{\includegraphics{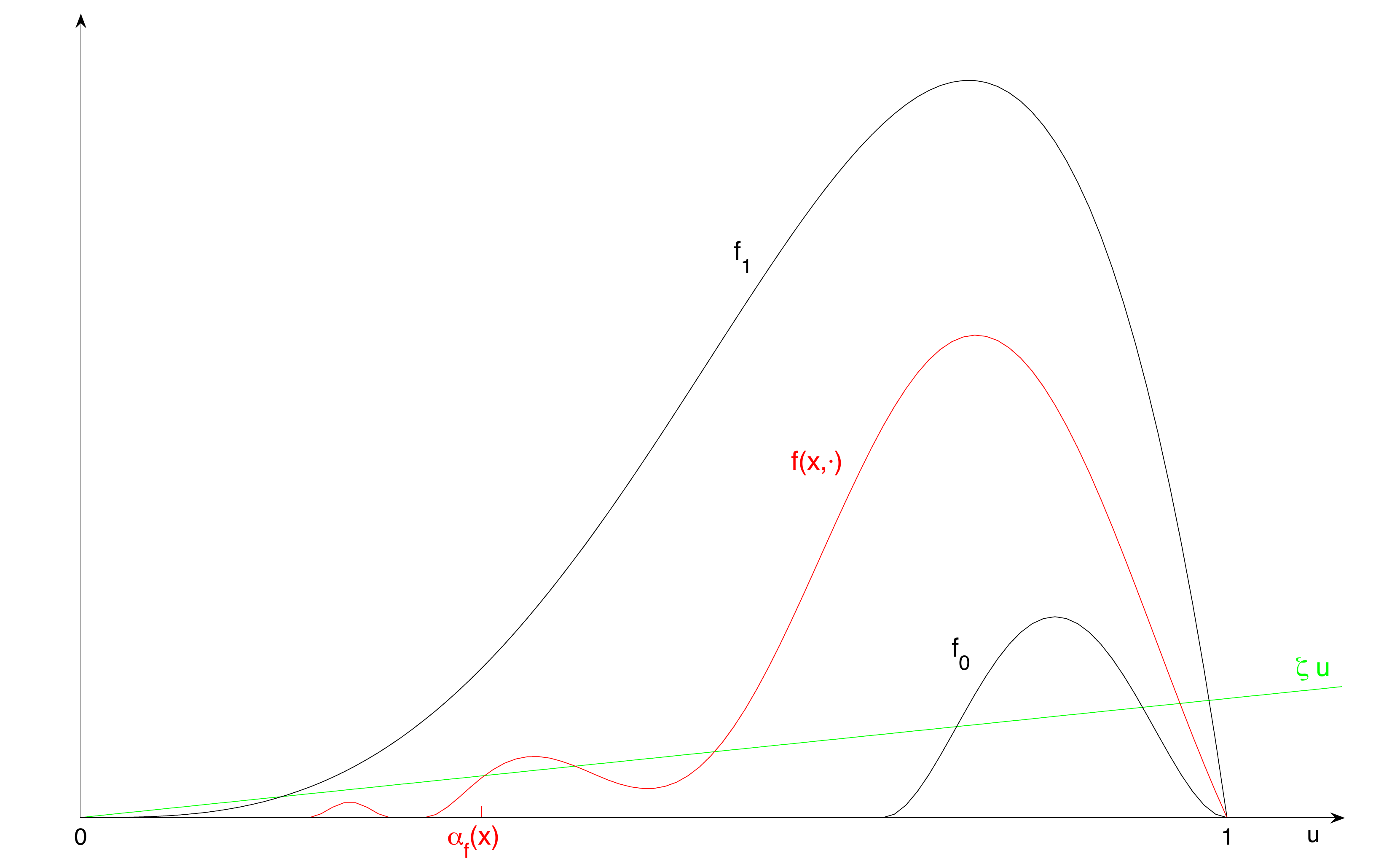}}
 \caption{Example of a reaction from Definition \ref{D.1.1a} (at some fixed $x\in\bbR^d$).}
 \label{fig2} 
 \end{figure}

\begin{definition} \lb{D.1.1a}
Let $f_0,K,\tht$ be as in (H) and let $\zeta, \eta>0$.  If $f$ satisfies (H), define
\beq \lb{1.4a}
\alpha_f(x) = \alpha_f(x;\zeta):= \inf \{ u\ge 0 \,|\, f(x,u)> \zeta u \},
\eeq
(with $\inf\emptyset=\infty$) and let $F(f_0,K,\tht,\zeta,\eta)$ be the set of all $f$ satisfying (H) such that
\beq \lb{1.4b}
 \inf_{\substack{x\in\bbR^d \\  u\in[\alpha_f(x),\tht_0]}} f(x,u)  \ge \eta.
\eeq
\end{definition}

{\it Remarks.} 1.  We will require that $f\in F(f_0,K,\tht,\zeta,\eta)$ for some {\it not too large} $\zeta>0$ and some $\eta>0$.  This assumption is physically relevant and encompasses a large class of functions.   A natural example is the pure ignition reaction from Remark 1 after (H),
when also
\beq \lb{1.4c}
 \til\eta:=\inf_{\substack{x\in\bbR^d \\  u\in[\til\tht(x)+\delta,\tht_0]}} f(x,u) >0
\eeq 
for some $\delta>0$  (in that case $f\in F(f_0,K,\tht,\zeta,\eta)$ for any $\zeta\ge \tfrac K\tht \delta$ and $\eta\in(0,\til\eta]$).  
\smallskip

2. Notice that this definition is not necessary when $f$ has a constant ignition temperature:  if $f(x,u)= 0$ for $(x,u)\in \bbR^d\times[0,\tht_0]$, then $f$ from (H) is in $F(f_0,K,\tht,\zeta,\eta)$ for any $\zeta,\eta>0$.
This is the case in Theorem \ref{T.1.0} (the special case $f(x,u)=a(x)f_0(u)$ was also considered in \cite{MRS,NolRyz,MNRR} in one spatial dimension).
\smallskip

3. Note that $F(f_0,K,\tht,\zeta,\eta)$ is spatially translation invariant and closed under locally uniform convergence of functions.  It is also decreasing in its odd arguments and increasing in the even ones.  In particular, $F(f_0,K,\tht,\zeta,\eta)\subseteq F(f_0,K,0,\zeta,\eta)$.   These facts will be useful later, as well as the obvious $\alpha_f(x)\ge \tfrac \eta K$ for $f\in F(f_0,K,\tht,\zeta,\eta)$. 
\smallskip

Without (some version of) the assumption from Remark 1, solutions of \eqref{1.1} need not have bounded widths even when $d=1$ and $f$ is a homogeneous ignition reaction!  Indeed, assume that $f:[0,1]\to[0,\infty)$ is such that $f(u)=0$ for $u\in[0,\tfrac 14]$, $f(u)>0$ for $u\in(\tfrac 14,\tfrac 12)$, and $f(u)= 2f(u-\tfrac 12)$ for $u\in [\tfrac 12,1]$.  Such $f$ vanishes on $[\tfrac 12,\tfrac 34]$ and so belongs to $F(f_0,K,\tht,\zeta,\eta)$ {\it only for large $\zeta$} (specifically $\zeta\ge\|f(u)/u\|_\infty$).  

For such $f$, there obviously is a traveling front solution  $u(t,x)=U(x-ct)$ of \eqref{1.1} connecting 0 and $\tfrac 12$ (i.e., such that $\lim_{s\to-\infty}U(s)=\tfrac 12$ and $\lim_{s\to\infty}U(s)=0$) and another $u(t,x)=\tfrac 12 + U(\sqrt 2(x-\sqrt 2ct))$ connecting $\tfrac 12$ and 1.  Their speeds are $0<c<\sqrt 2c$ and a simple comparison principle argument shows that all spark-like and front-like solutions have a linearly in time growing {\it propagating terrace}:
\beq \lb{1.4d}
\lim_{t\to\infty} \sup_{x\in [(c+\del)t, (\sqrt 2c-\del)t]} \left| u(x,t)-\frac 12 \right| = 0
\eeq
for any $\del>0$ (see \cite{DGM} for further results of this nature).  In particular, they do not have bounded widths.
Of course, for such solutions one can separately study the transition from 0 to $\tfrac 12$ and that from $\tfrac 12$ to 1, using our results.  Hence, the latter can also be applied in some situations when \eqref{1.4b} is not satisfied for any $\eta>0$ (and some not too large $\zeta>0$).

We are now ready to state our first main result, which applies to general {\it spark-like} and {\it front-like} initial data $u_0\in[0,1]$.  Specifically, we will assume that
either there are $x_0\in\bbR^d$,  $R_2\ge R_1>0$, and $\eps_1,\eps_2>0$ such that
\beq \lb{1.5}
(\tht_0+\eps_1) \chi_{B_{R_1}(x_0)}(x) \le u_0(x)\le e^{-\eps_2(|x-x_0|-R_2)},
\eeq
or there are $e\in\bbS^{n-1}$, $R_2\ge R_1$, and $\eps_1,\eps_2>0$ such that
\beq \lb{1.6}
(\tht_0+\eps_1) \chi_{\{x\,|\,x\cdot e<R_1\}}(x) \le u_0(x)\le e^{-\eps_2(x\cdot e-R_2)}.
\eeq
In \eqref{1.5} we also  assume that $R_1$ is large enough (depending on $\eps_1$) to guarantee {\it spreading} (i.e., $\lim_{t\to\infty}u(x,t)= 1$ locally uniformly in $\bbR^d$), because otherwise one might have {\it quenching} (i.e., $\lim_{t\to\infty} \|u(t,\cdot)\|_\infty =0$) for ignition reactions.

\begin{theorem} \lb{T.1.2}
(i) Let $f_0,K,$ and $\tht>0$ be as in (H) and let $\eta>0$, $\zeta\in(0,c_0^2/4)$, and \hbox{$f\in F(f_0,K,\tht,\zeta,\eta)$.}  Let $u$ solve \eqref{1.1}, \eqref{1.2} with spark-like or front-like $u_0\in[0,1]$ as above.
If $d\le 3$, then (C) holds with $\ell_\eps,m_\eps$ depending only on $\eps,f_0,K,\zeta,\eta$, and  $\tau_{\eps,\delta}$ in Definition~\ref{D.1.1b} also depending on  $\delta,f_1$.
%

(ii) If  $d\ge 4$, then there is $f$ as in (H) with $\tht>0$ and $f(x,u)= 0$ for $(x,u)\in \bbR^d\times[0,\tht_0]$ (so that $f\in F(f_0,K,\tht,\zeta,\eta)$ for any $\zeta,\eta>0$) such that all claims in (C) are false for any  $u_0\in[0,1]$ supported 
in the left half-space  for which $\limsup_{t\to\infty}\|u(t,\cdot)\|_\infty>0$. 
\end{theorem}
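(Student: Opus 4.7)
I would reduce conclusion (C) to the quantitative no-escape estimate: for $\eps\in(0,\tfrac12)$ and $t$ sufficiently large, $L_{u,\eps}(t)\le\ell_\eps$ with $\ell_\eps$ depending only on $\eps,f_0,K,\zeta,\eta$. Granted this, the bound $u_t\ge m_\eps$ on $\{u\in[\eps,1-\eps]\}$ follows from parabolic Schauder/Harnack estimates applied on a bounded ball around each such point that already contains values exceeding $1-\eps$, together with the lower bound $f\ge f_0$ producing a uniform positive growth rate there. The global-mean-speed bounds in Definition~\ref{D.1.1b} come by comparison of $\Omega_{u,\eps}(t+\tau)$ and $\Omega_{u,1-\eps}(t+\tau)$ with traveling fronts of $f_0$ (for the lower speed $c_0$) and of $f_1$ (for the upper speed $c_1$), using $\ell_\eps$ to interchange $\eps$- and $(1-\eps)$-super-level sets at uniform cost.

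\textbf{The no-escape estimate.} I would split $u$ into three regimes at each $(t,x)$ of interest. For $u\ge \tht_0$, the lower bound $f(x,u)\ge f_0(u)$ and comparison with traveling fronts of $v_t=\Delta v+f_0(v)$ push $u$ above $1-\eps$ on balls growing at speed $c_0$, yielding both the initial spreading and the target value at bounded distance. In the intermediate regime $u\in[\alpha_f(x;\zeta),\tht_0]$, the lower bound \eqref{1.4b} yields $f(x,u)\ge\eta$, so a local barrier built from subsolutions of $w_t=\Delta w+\eta$ forces $u$ above $\tht_0$ within a time bounded in terms of $K,\eta$. In the small regime $u<\alpha_f(x;\zeta)$, the estimate $f(x,u)\le\zeta u$ makes $u$ a subsolution of the linear equation $v_t=\Delta v+\zeta v$, whose fundamental solution spreads at speed exactly $2\sqrt{\zeta}$. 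The hypothesis $2\sqrt{\zeta}<c_0$ provides the essential speed gap: it prevents small values of $u$ from outrunning $\{u\ge\alpha_f\}$ by more than a bounded amount, since otherwise propagation backward in time along the linear flow would contradict the speed $c_0$ already realized on $\{u\ge\alpha_f\}$.

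\textbf{Where $d\le 3$ enters.} The regime-by-regime argument does not a priori exclude persistent wide plateaus on which $u$ is trapped in $[\alpha_f,1-\eps]$ over arbitrarily large regions, which would block the passage from $\alpha_f$-level to $(1-\eps)$-level; this is the main technical obstacle, and I would resolve it by a compactness-and-Liouville argument. If the no-escape estimate failed along $(t_n,x_n)$ with widening transition zones, parabolic regularity and diagonal extraction produce a time-monotone entire solution $\tilde u$ on $\bbR\times\bbR^d$ of a limit equation $\tilde u_t=\Delta\tilde u+\tilde f(x,\tilde u)$ with $\tilde f\in F(f_0,K,\tht,\zeta,\eta)$; the monotone limits $p^\pm(x):=\lim_{t\to\pm\infty}\tilde u(t,x)$ are then bounded stationary solutions of $\Delta p+\tilde f(x,p)=0$ differing on an unbounded set and trapped strictly between $0$ and $1$. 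A further sliding limit in the direction along which the plateau widens reduces the picture to a non-constant bounded stationary solution on $\bbR^{d-1}$ valued in $(0,1)$, and a Liouville-type theorem for semilinear elliptic equations with ignition-type nonlinearities rules such solutions out precisely when $d-1\le 2$.

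\textbf{Plan for (ii).} For $d\ge 4$ I would run the Liouville argument in reverse. For a suitable pure-ignition $g$, a shooting argument for the radial ODE $p''+\tfrac{d-2}{r}p'+g(p)=0$ on $\bbR^{d-1}$ (with $d-1\ge 3$) produces a non-constant bounded stationary solution $p$ with $p\ge\tht_0+\eps_1$ and $p(r)\to p_\infty\in(\tht_0,1-\eps_0)$; extending trivially in one extra variable yields a bounded stationary $\tilde p$ on $\bbR^d$ with values in $[\tht_0+\eps_1,1-\eps_0]$. The counterexample $f$, taken with constant ignition temperature so that $f\in F(f_0,K,\tht,\zeta,\eta)$ for every $\zeta,\eta>0$, would be built so that $\tilde p$ is a stationary barrier for the Cauchy flow in the transverse directions, while the lower bound $f\ge f_0$ still drives propagation in the $x_1$-direction from any left-half-space datum. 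Any non-quenching Cauchy solution is then eventually trapped under $\tilde p$ on an unbounded transverse tube: $\Omega_{u,\eps}(t)$ contains this tube for small $\eps$, while $\Omega_{u,1-\eps}(t)$ stays bounded in the transverse directions, giving $L_{u,\eps}(t)\to\infty$ and thus falsifying every claim in (C).
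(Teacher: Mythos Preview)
Your strategy for (i) captures the right ingredients --- the speed gap $2\sqrt\zeta<c_0$, comparison with the linear flow for small values, compactness to an entire limit, and a Liouville obstruction --- and your derivation of $u_t\ge m_\eps$ and the mean-speed bounds from the width estimate is correct. For $d\le 2$ the paper's argument is essentially what you describe: a limit along a failing sequence produces a stationary $v\in[0,1]$ on $\bbR^d$ with $\tilde f(0,v(0))>0$, and Lemma~\ref{L.2.2} forces $v$ constant. But for $d=3$ your plan has a real gap. Non-constant bounded stationary solutions \emph{do} exist on $\bbR^3$, and the paper does not slide to reduce to $\bbR^{d-1}$. Your step ``a further sliding limit in the direction along which the plateau widens reduces the picture to a non-constant bounded stationary solution on $\bbR^{d-1}$'' has no mechanism behind it: since $f$ is inhomogeneous in all coordinates, sliding along $x_n\to\infty$ changes $f$ as well, and nothing forces the limit to become independent of one variable. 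What the paper actually uses for $d=3$ is only the quantitative integral bound \eqref{2.3}, which limits how many unit cubes near any given point can carry $v>\alpha_f$; it then tracks a supersolution of a linear equation with large forcing supported on those sparse cubes, and controls it via the Feynman--Kac formula together with a Brownian-motion hitting estimate (Lemma~\ref{L.3.4}). This probabilistic step is the heart of the $d=3$ argument and is absent from your outline.

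For (ii) two things are off. First, a radial solution of $p''+\tfrac{d-2}{r}p'+g(p)=0$ on $\bbR^{d-1}$ with pure-ignition $g$ cannot satisfy $p(r)\to p_\infty\in(\tht_0,1-\eps_0)$: convergence forces $g(p_\infty)=0$, hence $p_\infty\in[0,\tht_0]\cup\{1\}$. The paper's $p$ in fact decays to $0$ like $|\tilde x|^{3-d}$ in the transverse variables. Second, the failure of (C) is not transverse trapping but a two-speed terrace in $x_1$: one boosts $f$ on the range $(\tfrac12,p(x))$ so that the transition $0\to p$ propagates in $x_1$ strictly faster than the transition $p\to 1$ (Lemma~\ref{L.9.1} caps the latter at a fixed speed $c$, while the boost drives the former above $2c$). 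Both $\Omega_{u,\eps}(t)$ and $\Omega_{u,1-\eps}(t)$ advance in $x_1$, but at different speeds, so $L_{u,\eps}(t)$ grows linearly; your picture ``$\Omega_{u,1-\eps}(t)$ stays bounded in the transverse directions'' is not what happens, and a barrier bounded below by $\tht_0+\eps_1$ would in any case be overtaken by the $c_0$-spreading of values near $1$.
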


{\it Remarks.}  1.  As noted before, the hypothesis $\zeta<c_0^2/4$ is crucial in (i).  It guarantees that the reaction at small $u$ (where $f(x,u)\le \zeta u$) is not strong enough to cause spreading at speeds $\ge c_0$.  This is because spreading speeds for homogeneous  reactions bounded above by $\zeta u$ are no more than $2\sqrt\zeta<c_0$.  Since $f\ge f_0$ has spreading speed no less than $c_0$, one should then expect spreading to be driven by ``intermediate'' values of $u$ (above $\alpha_f(x)$ and not too close to 1, where $f$ is small).  Thus $u$ would be  a ``pushed'' solution, and one can hope for  it to have a bounded width, provided one can also show that values of $u$ close to 1 do not ``trail'' far behind the intermediate ones.  We will prove the latter for $d\le 3$ but also show in (ii) that it fails in general for $d\ge 4$.
\smallskip


2. Note that the second claim in \eqref{1.7} and parabolic regularity shows that $\Omega_{u,\eps}(t)$ grows with {\it instantaneous speed} greater than some  positive constant at all times $t\ge t_0+T_\eps$ in (i).  An upper bound on the instantaneous speed of growth does not exist in general, however, because for $\eps\in(0,\tfrac 12)$, $\Omega_{u,\eps}(t)$ may acquire new connected components (which then soon merge with the ``main'' component) as time progresses.
\smallskip

3.  As the proof of (i) shows, $T_\eps$ in (C) depends on $\eps,f_0,K,\zeta,\eta,\tht,R_2-R_1,\eps_1,\eps_2$, and  $T_{\eps,\delta}$ in Definition \ref{D.1.1b} also depends on  $\delta, f_1$.  
\smallskip

4. The result extends to monostable reactions in a weaker form.  (ii) holds without change (the counter-example we construct is easily modified) but in (i) we need to assume that either there are $R_1,R_2,\eps_1>0$ ($R_1$ sufficiently large, depending on $\eps_1$) and $x_0\in\bbR^d$ such that
\beq \lb{1.5a}
(\tht_0+\eps_1) \chi_{B_{R_1}(x_0)}(x) \le u_0(x)\le \chi_{B_{R_2}(x_0)}(x),
\eeq
or there are $R_1,R_2\in\bbR$, $\eps_1,\eps_2>0$, and  $e\in\bbS^{n-1}$ such that
\beq \lb{1.6a}
(\tht_0+\eps_1) \chi_{\{x\,|\,x\cdot e<R_1\}}(x) \le u_0(x)\le (1-\eps_2) \chi_{\{x\,|\,x\cdot e<R_2\}}(x).
\eeq
Then for any $\eps\in(0,\tfrac 12)$ there are $\ell_\eps,T_\eps\in(0,\infty)$, depending  on $\eps,f_0,K,\zeta,\eta,\eps_1$, and either on $R_2$ (for \eqref{1.5a}) or on $R_2-R_1,\eps_2$ (for \eqref{1.6a}), such that  $L_{u,\eps}(t)\le \ell_\eps$ for $t> t_0+T_\eps$. 
\smallskip

%

%


The first step in the proof of Theorem \ref{T.1.2}(i) will be to consider general solutions with $u_t\ge 0$.  
That is, such that on $\bbR^d$,
\beq \lb{3.6a}
\Delta u_0(\cdot) + f(\cdot,u_0(\cdot))\ge 0,
\eeq
which then guarantees $u_t\ge 0$ because  $v:=u_t$ solves $v_t=\Delta v + f_u(x,u(x))v$ with $v(0,x)\ge 0$. 
For $d\le 3$ we will show  that if the width of the reaction zone of such $u$ is controlled at the initial time $t_0$ (see \eqref{1.6b} below), then the conclusions of Theorem \ref{T.1.2}(i) continue to hold.  This step is  related to our proof of existence of transition fronts in \cite{ZlaGenfronts}, but will be considerably more involved, particularly for $d=3$.  

This latter result applies to any such solution $u$ (as well as solutions trapped between time-shifts of such $u$), not just the spark-like or front-like ones, and is stated next.  We let
\beq \lb{1.6b}
L_{u,\eps,\eps'}(t) := \inf \left\{L>0 \,\big|\, \Omega_{u,\eps}(t)\subseteq B_L \left(\Omega_{u,\eps'}(t)\right) \right\}
\eeq
be the width of the transition zone from $\eps$ to $\eps'$.  We will assume that $L_{u,\eps,\eps'}(t_0)<\infty$ for each $\eps>0$ and some fixed $\eps'>\tht_0$.  Here $\eps'$ can be arbitrary when $d\le 2$, and equals $1-\eps_0$ when $d=3$ (with $\eps_0=\eps_0(f_0,K)>0$ from Lemma \ref{L.2.1} below).  This choice of $\eps'$ will guarantee spreading for any solution satisfying \eqref{3.6a} and $u(t_0,x)\ge \eps'$ for some $x\in\bbR^{d}$.

\begin{theorem} \lb{T.1.3}
Let $d\le 3$, let $f_0,K,$ and $\tht>0$ be as in (H), and let $\eta>0$, $\zeta\in(0,c_0^2/4)$, and $f\in F(f_0,K,\tht,\zeta,\eta)$.  Let $u$ solve \eqref{1.1}, \eqref{1.2} with $u_0\in[0,1]$ satisfying \eqref{3.6a}.

(i) If $\eps'$ is as above and  $L_{u,\eps,\eps'}(t_0)<\infty$  for each  $\eps>0$,  then (C) holds with $\ell_\eps,m_\eps$ depending only on $\eps,f_0,K,\zeta,\eta$, and $\tau_{\eps,\delta}$ in Definition \ref{D.1.1b} also depending on  $\delta,f_1$.

(ii) If $u$ is as in (i), and a solution $v$ of \eqref{1.1} satisfies 
\[
u(t_0,\cdot)\le v(t_0+\tau,\cdot)\le u(t_0+2\tau,\cdot)
\]
 for some $\tau>0$, then (C) holds for $v$ with $\ell_\eps,m_\eps,\tau_{\eps,\delta}$ as in (i)
 (so independent of $\tau$).
\end{theorem}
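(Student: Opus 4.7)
The plan is to prove both parts by compactness/contradiction, extending the entire-solution technique from \cite{ZlaGenfronts} and reducing every failure to the existence of a nontrivial bounded stationary solution of a limit PDE in the class $F(f_0,K,\tht,\zeta,\eta)$, which the analysis of Section~\ref{S8} excludes in $d\le 3$.

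For part (i), I first observe that the hypothesis $\Delta u_0 + f(\cdot, u_0) \ge 0$ propagates to $u_t \ge 0$ for all $t > t_0$, since $v := u_t$ solves the linear parabolic equation $v_t = \Delta v + f_u(x,u)\,v$ with non-negative initial data; hence every $\Omega_{u,\eps'}(t)$ is non-decreasing in $t$. The finiteness of $L_{u,\eps,\eps'}(t_0)$ for all $\eps > 0$, with the sharpened threshold $\eps' = 1 - \eps_0$ in $d = 3$ from Lemma~\ref{L.2.1}, together with $f \ge f_0$ and parabolic comparison, forces $u \to 1$ locally uniformly.  The core of (i) is the uniform width bound, proved by contradiction:  if for some $\eps > 0$ there exist $t_n \to \infty$ and $x_n \in \bbR^d$ with $u(t_n, x_n) \ge \eps$ and $u(t_n, \cdot) \le 1 - \eps$ on $B_n(x_n)$, then translating $u_n(t,x):=u(t+t_n,x+x_n)$ and $f_n(x,w):=f(x+x_n,w)$ and invoking parabolic regularity (together with closure of $F(f_0,K,\tht,\zeta,\eta)$ under locally uniform convergence) yields an entire solution $U$ of $U_t = \Delta U + \tilde f(x,U)$ with $\tilde f\in F(f_0,K,\tht,\zeta,\eta)$, $U_t \ge 0$, $U(0,0)\ge\eps$, and $U(0,\cdot) \le 1-\eps$ on $\bbR^d$. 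The limits $U_\pm := \lim_{t \to \pm\infty} U(t,\cdot)$ are bounded stationary solutions of the limit PDE with $U_- \le U(0,\cdot) \le 1-\eps$.  If either $U_-$ or $U_+$ is nontrivial, the Liouville-type result of Section~\ref{S8} gives a contradiction.  Otherwise $U$ is a full transition from $0$ to $1$ with $U(0,\cdot)\le 1-\eps$ everywhere; a further compactness extraction at a point where $U=\eps/2$, combined with the mismatch between the $c_0$-lower-speed forced by $\tilde f \ge f_0$ on intermediate values and the sub-$c_0$ upper speed forced by $\zeta<c_0^2/4$ on the range $[0,\alpha_{\tilde f}]$ where $\tilde f \le \zeta u$, again produces a nontrivial bounded stationary solution of the limit PDE and another contradiction with Section~\ref{S8}.

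The derivative bound $u_t \ge m_\eps$ where $u \in [\eps,1-\eps]$ follows from the same scheme: a failing sequence yields a limit $U$ with $U_t(0,0)=0$, and the strong maximum principle applied to the non-negative $U_t$ (which solves a linear parabolic equation) forces $U_t\equiv 0$, making $U$ itself a nontrivial bounded stationary solution satisfying the width bound just proved --- again excluded by Section~\ref{S8}.  The global mean speed bounds in Definition~\ref{D.1.1b} are then standard:  the lower bound $c_0-\delta$ comes from inserting traveling fronts of $f_0$ as moving subsolutions anchored to $\Omega_{u,1-\eps}(t)$ via $f\ge f_0$, and the upper bound $c_1+\delta$ dominates $u$ above using the spreading solution for $f_1$.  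For part (ii), the comparison principle applied to the sandwich at time $t_0+\tau$ gives $u(s-\tau,\cdot) \le v(s,\cdot) \le u(s+\tau,\cdot)$ for all $s\ge t_0+\tau$.  A direct transfer of the width bound through this sandwich would produce constants depending on $\tau$; instead I rerun the compactness argument directly on $v$.  Any failure of (C) for $v$ with the constants of (i) along some $(s_n,x_n)$ with $s_n\to\infty$ produces, via simultaneous translation of both $u$ and $v$, an entire solution $V$ of the limit PDE trapped between $U(\cdot-\tau,\cdot)$ and $U(\cdot+\tau,\cdot)$, where $U$ is an entire time-monotone solution itself satisfying the full conclusions of (i) with the same universal constants.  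The trap together with the spreading of $U$ forces $V(t,\cdot)\to 1$ as $t\to\infty$ and $V(t,\cdot)\to 0$ as $t\to-\infty$, so $V$ is a full transition.  The same backward-limit / strong-maximum-principle / Liouville analysis applied to $V$ generates a nontrivial bounded stationary solution of the limit PDE, contradicting Section~\ref{S8}.  The $\tau$-independence of $\ell_\eps, m_\eps, \tau_{\eps,\delta}$ is automatic, since $\tau$ does not enter the hypotheses on the limit equation, and the constants coming from the limit $U$ and from the analysis of $V$ are precisely the universal ones from (i).

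The principal obstacle, present already in (i), is the Liouville step:  one must exclude nontrivial bounded stationary solutions of $\Delta p + g(x,p) = 0$ with $g \in F(f_0,K,\tht,\zeta,\eta)$ (together with the bounded-width constraints produced by the compactness argument) when $d\le 3$.  The counter-example in Theorem~\ref{T.1.2}(ii) shows that this fails in $d\ge 4$ via an obstruction which, as explained in Section~\ref{S8}, corresponds to the existence of non-constant bounded stationary solutions in $\bbR^{d-1}$.  The case $d=3$ is the tightest and is precisely why the hypothesis of (i) uses $\eps' = 1 - \eps_0$ rather than an arbitrary $\eps' > \tht_0$.
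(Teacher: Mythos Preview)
Your overall architecture (compactness to an entire solution, then rule it out) is indeed what the paper uses for part (ii), but your execution of the key step is a genuine gap, and for part (i) the paper's route is quite different from yours.

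\medskip
\textbf{The gap.} You repeatedly invoke ``the Liouville-type result of Section~\ref{S8}'' to exclude nontrivial bounded stationary solutions $p\in(0,1)$ of $\Delta p + g(x,p)=0$ with $g\in F(f_0,K,\tht,\zeta,\eta)$. Section~\ref{S8} contains no such result (it is the proof of Theorem~\ref{T.1.5}). The only Liouville statement in the paper is Lemma~\ref{L.2.2}, and it yields nonexistence of nontrivial stationary $p$ only when $d\le 2$; in $d=3$ it gives merely the integral bound \eqref{2.3}, which does \emph{not} preclude such $p$. So the step ``$U_\pm$ nontrivial $\Rightarrow$ contradiction'' is unproved in $d=3$. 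Your fallback in the case $U_-\equiv 0$, $U_+\equiv 1$ --- ``a further compactness extraction \dots combined with the speed mismatch \dots again produces a nontrivial bounded stationary solution'' --- is precisely the content of Lemma~\ref{L.3.2}, and in $d=3$ that lemma is the hardest ingredient of the whole paper (Section~\ref{S5}): the integral bound \eqref{2.3} is fed into a Feynman--Kac representation and a Brownian-hitting estimate (Lemma~\ref{L.3.4}) to show that the sub-$c_0$ region cannot outrun the $c_0$-spreading. None of this is captured by a one-line Liouville appeal.

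\medskip
\textbf{How the paper actually proceeds.} For (i) the paper does \emph{not} argue by contradiction. It introduces the quantitative functionals $Z_y(t)$ and $Y^h_y(t)$ of \eqref{3.1}--\eqref{3.2}, proves the speed comparison Lemma~\ref{L.3.1} and the crucial Lemma~\ref{L.3.2}, and combines them into the direct estimate \eqref{3.9}--\eqref{3.9a} of Theorem~\ref{T.3.3}. The width bound $\ell_\eps=M_{\eps/2}$ and then $m_\eps$ and the speed interval follow immediately (Section~\ref{S4}). For (ii) the paper does run a compactness argument (Theorem~\ref{T.5.1}), but with two ingredients you are missing: first, a \emph{preliminary} $\tau$-dependent width bound on $v$ from Theorem~\ref{T.3.3}(ii), needed so that the extracted limit has bounded width at all; second, the limit is handled not by a Liouville theorem but by Theorem~\ref{T.1.5}(ii) (the limit entire solution has $u_t>0$) followed by Corollary~\ref{C.3.3a}, which again rests on Theorem~\ref{T.3.3} and hence on Lemma~\ref{L.3.2}. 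In short, even the compactness route cannot avoid the hard $d=3$ analysis; it is hidden inside Corollary~\ref{C.3.3a} rather than bypassed.
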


{\it Remarks.}  
1.  In (i), $T_\eps$  in (C) depends on $\eps,f_0,K,\zeta,\eta,\tht, u_0$, the dependence on $u_0$ being only via the number  $L_{u,h,\eps'}(t_0)$ with $h:=\min\left\{ \tht(c_0^2-4\zeta)(c_0^2+4\zeta)^{-1},\frac\eta{4K},1-\eps',\tfrac \eps 2 \right\}$ (see the proof);  $T_{\eps,\delta}$ in Definition \ref{D.1.1b} also depends on  $\delta, f_1$.   In (ii) they also depend on $\tau$.

2.  (i) extends to monostable $f$ if we also assume $\sup_{\eps\in(0,1)} \eps e^{\sqrt\zeta L_{u,\eps,\eps'}(t_0)}< \infty$, but with  global mean speed  in $[c_0,c_Y']$, where $c_Y'$ is from \eqref{3.0a} below.
\smallskip

3. (ii) also extends to monostable $f$ if we assume $\sup_{\eps\in(0,1)} \eps e^{\sqrt\zeta L_{u,\eps,\eps'}(t_0)}< \infty$, but with $\tau$-dependent $\ell_\eps,\tau_{\eps,\delta}$, without the second claim in \eqref{1.7}, and with global mean speed in $[c_0,c_Y']$.\smallskip

Notice that in (ii), the bounds in (C) are independent of the time shift $\tau$.  To prove this, we will first need to show such solution-independent bounds for entire  solutions with bounded widths when $d\le 3$.  In particular, as long as such a solution has a bounded width, the bound on $\sup_{t\in\bbR} L_{u,\eps}(t)$ (for $\eps\in(0,\tfrac 12)$) will in fact {\it only depend on $\eps, f_0,K,\zeta,\eta$.}  It will then suffice to show, using parabolic regularity, that the solutions from (ii) asymptotically look  like entire solutions with bounded widths, where the bounds involved will be allowed to depend on $\tau$.

A crucial ingredient in this will be the proof that entire solutions with bounded widths satisfy $u_t\ge 0$ (in all dimensions).  Such a result was previously  proved in \cite{BH3} for transition fronts in a closely related setting.  This and the uniform bounds for entire solutions with bounded widths are stated in Theorem \ref{T.1.5} below.

Theorem \ref{T.1.2}(i) is proved similarly to Theorem \ref{T.1.3}(ii), but the solution will be sandwiched between time-shifts of a time-increasing solution, perturbed by certain exponentially in space decreasing functions.  We will therefore also need to prove stability of spark-like and front-like time-increasing solutions with respect to such perturbations.  This could be extended to other situations where time-increasing solutions with some specific profiles are stable with respect to appropriate (exponentially decreasing) perturbations.  For instance, one could handle in this way {\it cone-like} solutions, with initial data exponentially decreasing inside a $d$-dimensional cone and converging to 1 outside it.  We will not pursue this direction here.

We also note that these results cannot be extended to arbitrary spreading solutions, even for homogeneous pure ignition reactions $f(x,u)=f_0(u)$ and $d=1$.  Indeed, the author showed \cite{ZlaSharp} that then there exists a unique $M>0$ such that the solution of \eqref{1.1}, \eqref{1.2} with $u_0:=\chi_{[-M,M]}$ converges locally uniformly to $\tht_0$ as $t\to\infty$.  If we now let $R\gg 1$ and  $u_0:=\chi_{[-R,R]}+\sum_{n=1}^\infty \chi_{[a_n-M,a_n+M]}$ with sufficiently rapidly  growing  $a_n$, the solution $u$ will have increasingly long plateaus as $t\to\infty$.  Specifically, there will be $t_n,b_n\to\infty$ such that
\[
\lim_{n\to\infty} \sup_{x\in[a_n-b_n,a_n+b_n]} |u(t_n,x)-\tht_0|=0.
\]
Such $u$ therefore does not even have a {\it semi-bounded} width!

Finally,  most of the argument for $d\le 3$ also applies  if $d\ge 4$,  the one exception being Lemma~\ref{L.3.2} below.  The reason it fails for $d\ge 4$ lies in Lemma \ref{L.2.2}, which only excludes existence of equilibrium solutions to \eqref{1.1} which are independent of one coordinate when $d\le 3$.  Such solutions will be the basis of the counter-example proving Theorem \ref{T.1.2}(ii).

\bigskip
\noindent
{\bf Extensions to More General Reactions, Equilibria, and Solutions}
\smallskip
\smallskip

Let us now discuss the more general case when typical solutions transition from some equilibrium $u^-$ to another equilibrium $u^+$ (instead from 0 to 1), with $u^-<u^+$ and 
\beq\lb{1.19}
0<\inf_{x\in\bbR^d} [u^+(x)-u^-(x)] \le \sup_{x\in\bbR^d} [u^+(x)-u^-(x)]<\infty
\eeq
(the case $u^->u^+$ is identical, as one can consider the equation for $-u$ instead).  Our goal is to extend the positive results in Theorems \ref{T.1.2}(i) and \ref{T.1.3} to such situations.

We will assume $u^-\equiv 0$ without loss, because the general case is immediately reduced to this by taking $v:=u-u^-$, which solves \eqref{1.1} with  $f$ replaced by 
\[
g(x,v):=f(x,v+u^-(x))-f(x,u^-(x)).
\]  
Obviously, we can also assume $u^+\le 1$, by \eqref{1.19} and after scaling in $u$.

Thus we will now assume the following generalization of (H).

\medskip
{\it Hypothesis (H'):  $f$ is Lipschitz with constant $K\ge 1$ and 
\[
f(x,0)=0 \qquad \text{for $x\in\bbR^d$.}  
\]
There are also $0<\tht_0< \tht_1\le 1$ and Lipshitz  $f_0:[0,\tht_1]\to\bbR$  with $f_0(0)=f_0(\tht_0)=f_0(\tht_1)=0$, $f_0(u)< 0$ for $u\in(0,\tht_0)$, and $f_0(u)>0$ for $u\in(\tht_0,\tht_1)$,  such that $\int_0^{\tht_1} f_0(u)du>0$ and
\[
f(x,u)\ge f_0(u) \qquad \text{for $(x,u)\in \bbR^d\times [0,\tht_1]$}.
\]
Furthermore, we assume that there is an equilibrium solution $u^+$  of \eqref{1.1} with 
\beq \lb{1.20}
\tht_0<\inf_{x\in\bbR^d} u^+(x)\le\sup_{x\in\bbR^d} u^+(x)\le 1,
\eeq
and we have
\beq\lb{1.24}
\text{$f(x,u)\ge 0$ when $u< 0$} \qquad \text{and} \qquad \text{$f(x,u)\le f(x,u^+(x))$ when $u> u^+(x)$}
\eeq
Finally, there is $\tht\in[0,\tfrac {\tht_0}3]$ such that $f$ is non-increasing in $u$ on $[0,\tht]$ and on $[u^+(x)-\tht,u^+(x)]$ for each $x\in\bbR^d$ ($\tht=0$ obviously always works but we will obtain stronger results when $\tht>0$).}  
\medskip

That is, $f_0$ is now a {\it pure bistable reaction} (while $f_1$ in \eqref{1.4e} will still be pure ignition or pure monostable), so $f$ could be any mix of different reaction types.  The hypothesis $\int_0^{\tht_1} f_0(u)du>0$ is necessary for solutions of \eqref{1.1}, \eqref{1.2}, with reaction $f_0$ and large enough $u_0\in[0,\tht_1]$, to spread (i.e.,  $\lim_{t\to\infty} u(t,x)= \tht_1$ locally uniformly).  In fact, it guarantees that the front/spreading speed $c_0$ for this $f_0$ (which corresponds to the traveling front for $f_0$ connecting $0$ and $\tht_1$, and is unique just as for ignition reactions) is positive.
Thus, typical non-negative solutions of \eqref{1.1} transition {\it away from} $u=0$.  Transition {\it to} $u^+$ is, however, not guaranteed by (H') only.  Finally, \eqref{1.24} will be needed in Theorem \ref{T.1.12} to extend our results to solutions which are not necessarily between 0 and $u^+$.

We next need to generalize Definitions \ref{D.1.1}--\ref{D.1.1a} to the case at hand.  We will first consider solutions $0\le u \le u^+$ (henceforth denoted $u\in[0,u^+]$), when \eqref{1.24} is of no consequece.
Definitions \ref{D.1.1} and \ref{D.1.1b} are unchanged for such $u$, but  use (for $\eps\in (0,\tfrac{1}2)$)
\begin{align}
 \Omega_{u,\eps}(t) & :=\{x\in\bbR^d\,|\, u(t,x) \ge \eps \}, 
 \lb{1.20a}
\\  \Omega_{u,1-\eps}(t) & :=\{x\in\bbR^d\,|\, u(t,x) \ge u^+(x)-\eps \}. 
\lb{1.20b}
\end{align}

Definition \ref{D.1.1a}, on the other hand, needs to be changed because $f(x,u^+(x))\not\ge 0$ in general.  The motivation for this new form comes from the proofs of Theorems \ref{T.1.2}(i) and \ref{T.1.3}, specifically from the use of Lemma \ref{L.2.2} below in the proof of the $d=3$ case of Lemma \ref{L.3.2}.

\begin{definition} \lb{D.1.10}
Let $f_0,K,\tht$ be as in (H') and $\zeta, \eta>0$.  If $f$ satisfies (H'), define $\alpha_f(x;\zeta)$ as in \eqref{1.4a}.
Finally, let $F'(f_0,K,\tht,\zeta,\eta)$ be the set of all $(f,u^+)$ satisfying (H') such that $\alpha_f(x;\zeta)\ge \eta$ for all $x\in\bbR^d$ and  any equilibrium solution $p$ of \eqref{1.1} with $0<p<u^+$ satisfies
\beq \lb{1.21}
\sup_{x_0\in\bbR^d} \sum_{n\ge 1} \frac1{1+d(x_0,\calC_n)}  \le \frac 1\eta.
\eeq
Here $d(\cdot,\cdot)$ is the distance in $\bbR^d$ and  $\calC_1,\calC_2,\dots$ are all (distinct) unit cubes in $\bbR^d$,  whose corners have integer coordinates, such that $p(x)> \alpha_f(x;\zeta)$ for some $x\in\calC_n$.
\end{definition}

{\it Remarks.}  1.  The advantage of \eqref{1.4b}, relative to \eqref{1.21},  is that the former is a local condition while the latter is not.  Thus \eqref{1.21} is more difficult to check.  An obvious sufficient condition is when $p(\cdot)\le\alpha_f(\cdot ;\zeta)$  for each equilibrium $0<p<u^+$ (with $\zeta<c_0^2/4$, so that our results apply), which may be proved under some {\it quantitative} local hypotheses on $f$.  A simple such example is when $d=1=\tht_1$ and $f$ is sufficiently close to a homogeneous reaction $f_0$ as in (H') with $\int_0^\beta f_0(u) du>0$, where $\beta\in(\tht_0,1)$ is smallest number  such that $f_0(\beta)=c_0^2\beta/4$.
\smallskip

2.  Lemma \ref{L.2.2} shows that in the setting of (H),  \eqref{1.4b} implies \eqref{1.21} when $d\le 3$ (but not when $d\ge 4$), although with a different $\eta>0$.
\smallskip

3. \eqref{1.21} will cause typical solutions between 0 and $u^+$ to transition to $u^+$ (instead of to some other equilibrium $p<u^+$), and also to have a bounded width.  The latter need not be true without a condition like \eqref{1.21}, as is demonstrated by the example in the proof of Theorem \ref{T.1.2}(ii), for which the sum in \eqref{1.21} diverges, albeit slowly (as $\log n$).
\smallskip

Note that unlike $F(f_0,K,\tht,\zeta,\eta)$, the set $F'(f_0,K,\tht,\zeta,\eta)$ may be neither spatially translation invariant (although it would be if the $\calC_n$ were integer translations of any fixed unit cube $\calC$, and the sup in \eqref{1.21} were also taken over all such $\calC$) nor closed with respect to locally uniform convergence 
(i.e., locally uniform convergence for $q(x,u):=(f(x,u),u^+(x))$ on $\bbR^d\times\bbR$).
Since these properties will be essential in our analysis, in the following generalization of Theorems \ref{T.1.2}(i) and \ref{T.1.3} we will work with subsets $\calF\subseteq F'(f_0,K,0,\zeta,\eta)$ which possess them both (an example is the closure of all translations of a given $(f,u^+)$ with respect to locally uniform convergence).  We will denote $\calF_\tht:=\calF\cap F'(f_0,K,\tht,\zeta,\eta)$ for $\tht\ge0$, which then also has the same properties.

\begin{theorem} \lb{T.1.11}
Let  $f_0,K,$ and $\tht>0$ be as in (H') and let $\eta>0$, $\zeta\in(0,c_0^2/4)$, and $\calF \subseteq F'(f_0,K,0,\zeta,\eta)$  be spatially translation invariant and closed with respect to locally uniform convergence.  Let $(f,u^+)\in\calF_\tht$ and  let $u$   solve \eqref{1.1}, \eqref{1.2} with $u_0\in[0,u^+]$. 

(i) If $d\ge 1$ and $u_0$ satisfies \eqref{1.5} or \eqref{1.6}, then  (C)
holds with $1-\eps$ replaced by $u^+(x)-\eps$ in \eqref{1.7}, with $\ell_\eps,m_\eps$ depending only on $\eps,\calF$, and $\tau_{\eps,\delta}$ in Definition \ref{D.1.1b} also depending on  $\delta,f_1$.

(ii) If $d\ge 1$, $u_0$ satisfies \eqref{3.6a}, and $L_{u,\eps,1-\eps_0}(t_0)<\infty$  for $\eps_0>0$ from Lemma \ref{L.11.1} and each  $\eps>0$,  then (C)  holds for $u$ and for $v$ as in Theorem \ref{T.1.3}(ii), with $1-\eps$ replaced by $u^+(x)-\eps$ in \eqref{1.7}, with $\ell_\eps,m_\eps$ depending only on $\eps,\calF$, and $\tau_{\eps,\delta}$ in Definition \ref{D.1.1b} also depending on  $\delta,f_1$.
\end{theorem}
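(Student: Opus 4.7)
My plan is to adapt the proof architecture of Theorems \ref{T.1.2}(i) and \ref{T.1.3} to the setting of (H'), making three structural substitutions. First, the constant target $1$ is replaced by the $x$-dependent equilibrium $u^+$, and super-level sets near $1$ are redefined via \eqref{1.20b}. Second, the lower barrier $f_0$ is now bistable with $\int_0^{\tht_1}f_0>0$, so it still has a unique, positive traveling-front speed $c_0$. Third, and most importantly, the dimension restriction $d\le 3$ in Theorems \ref{T.1.2}(i) and \ref{T.1.3} is removed because the role of Lemma \ref{L.2.2} (which rules out non-constant equilibria between $0$ and $1$ in $\bbR^{d-1}$ only for $d\le 3$) is now played by the hypothesis \eqref{1.21} built into $F'$. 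I would prove (ii) first and then deduce (i) from it via a sandwich construction, as in the original proof of Theorem \ref{T.1.2}(i).

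The heart of the argument is the analog of Theorem \ref{T.1.5} for entire solutions $u\in[0,u^+]$ of \eqref{1.1} with bounded widths and $(f,u^+)\in\calF_\tht$. First, $u_t\ge 0$ for such $u$ follows by a sliding-in-time argument as in Berestycki--Hamel, using \eqref{1.20} and \eqref{1.24} together with $\tht>0$ to propagate the ordering from the tails into the bulk. Second, and this is the main obstacle, I need the uniform width bound $\sup_{t\in\bbR} L_{u,\eps}(t)\le \ell_\eps$ with $\ell_\eps$ depending only on $\eps,\calF$. Arguing by contradiction, a sequence $(f_n,u_n^+)\in\calF$ and entire $u_n\in[0,u_n^+]$ with widths $L_{u_n,\eps}(t_n)\to\infty$ yield, after spatial translation and a parabolic-regularity subsequential limit, an entire time-monotone solution $\tilde u\in[0,\tilde u^+]$ for some $(\tilde f,\tilde u^+)\in\calF_0$ (here I use translation invariance and closure of $\calF$ under locally uniform convergence) whose transition zone is infinite at $t=0$. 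Passing $t\to-\infty$ on $\tilde u$ produces an equilibrium $p$ of \eqref{1.1} for $\tilde f$ with $0\le p\le \tilde u^+$, neither identically $0$ nor identically $\tilde u^+$; the set where $p(x)>\alpha_{\tilde f}(x;\zeta)$ must be nonempty (otherwise $\zeta<c_0^2/4$ forces $p\equiv 0$ by comparison with a linearized supersolution), and the unbounded transition width of $\tilde u$ forces this set to contain points arbitrarily far from any fixed $x_0$ along an infinite sequence of scales, so that $\sum_n (1+d(x_0,\calC_n))^{-1}=\infty$. This contradicts $(\tilde f,\tilde u^+)\in F'(f_0,K,0,\zeta,\eta)$, which is the key place where \eqref{1.21} is used and where the dimension restriction of the original proof is bypassed.

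Once the entire-solution bounds are in hand, part (ii) follows along the lines of Theorem \ref{T.1.3}(ii): a solution $u$ satisfying \eqref{3.6a} with $L_{u,\eps,1-\eps_0}(t_0)<\infty$ is time-monotone and can be compared, via parabolic-regularity limits, with entire time-monotone solutions in $\calF_\tht$; this yields $\ell_\eps,m_\eps$ depending only on $\eps,\calF$, with the positive lower bound $m_\eps$ on $u_t$ on $\{u\in[\eps,1-\eps]\}$ coming from strict time-monotonicity of the limiting entire solutions plus standard interior parabolic estimates. A sandwiching solution $v$ with $u(t_0,\cdot)\le v(t_0+\tau,\cdot)\le u(t_0+2\tau,\cdot)$ inherits the same bounds, independently of $\tau$, exactly as in Theorem \ref{T.1.3}(ii). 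The global mean speed is trapped in $[c_0,c_1]$ by comparison from below with the unique bistable front for $f_0$ and from above with front solutions for $f_1$. Finally, for (i) one sandwiches the spark- or front-like $u$ between time-shifts of a time-monotone solution modulo exponentially decaying corrections; the hypothesis $\zeta<c_0^2/4$ is precisely what allows these corrections to be absorbed into super- and subsolutions with spreading speed strictly below $c_0$, reducing the problem to part (ii).
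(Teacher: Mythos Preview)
Your overall plan---adapt the proofs of Theorems \ref{T.1.2}(i) and \ref{T.1.3}, replacing $1$ by $u^+$ and replacing the role of Lemma \ref{L.2.2} by hypothesis \eqref{1.21}---matches the paper's strategy. However, the specific way you propose to use \eqref{1.21} is different from the paper's, and your argument contains a genuine gap.

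The paper does \emph{not} bypass the $Y/Z$ machinery of Section \ref{S3}. It keeps Lemmas \ref{L.3.1} and \ref{L.3.2}, Theorem \ref{T.3.3}, and Corollary \ref{C.3.3a} intact, and proves the analog of Lemma \ref{L.3.2} for all $d\ge 1$ by following the $d=3$ argument of Section \ref{S5} (the Feynman--Kac / Brownian-motion estimate via Lemma \ref{L.3.4}). The only change is that the density bound \eqref{3.13b} on the cubes where an equilibrium $p$ exceeds $\alpha_f$---which in the original setting came from Lemma \ref{L.2.2}---is now read off directly from \eqref{1.21}. Everything downstream (Theorem \ref{T.3.3}, Corollary \ref{C.3.3a}, Theorem \ref{T.5.1}, and the sandwich arguments) then carries over verbatim.

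Your alternative route tries to skip all of this: you argue by contradiction, pass to a limiting entire time-monotone solution $\tilde u$ with infinite width at $t=0$, take a $t\to-\infty$ limit to produce an equilibrium $p$, and then claim that the set $\{p>\alpha_{\tilde f}\}$ must make $\sum_n(1+d(x_0,\calC_n))^{-1}$ diverge. That last implication is unjustified. ``Points arbitrarily far from $x_0$ along an infinite sequence of scales'' does not force divergence of a harmonic-type series; the cubes could sit at distances $r_n\to\infty$ with $\sum r_n^{-1}<\infty$ and \eqref{1.21} would be satisfied. Hypothesis \eqref{1.21} is a \emph{density} condition, not a nonexistence condition, and nothing in your compactness argument produces the required density of bad cubes. (There is also a secondary issue: with $\tilde u_t\ge 0$, the $t\to-\infty$ limit $p$ lies \emph{below} $\tilde u(0,\cdot)$, so you do not automatically get $p\not\equiv 0$ from $\tilde u(0,0)\ge\eps$; you would need the $t\to+\infty$ limit instead, and then must rule out $q\equiv u^+$.) The paper's approach avoids all of this by using \eqref{1.21} quantitatively inside the probabilistic estimate, exactly where Lemma \ref{L.2.2} was used in the $d=3$ case.
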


{\it Remarks.}  1.  Here  $T_\eps$ in (C) and $T_{\eps,\del}$ in Definition \ref{D.1.1b} depend on the same parameters as in Theorems \ref{T.1.2} (in (i)) and \ref{T.1.3} (in (ii)), but with $f_0,K,\zeta,\eta$ replaced by $\calF$.  This is also the case in Theorem~\ref{T.1.12} below, but there $T_\eps$ and $T_{\eps,\del}$ depend also on $\|u_0\|_\infty$.
\smallskip

2. These results again extend to the case $\tht=0$ in the slightly weaker form from Remark 4 after Theorem \ref{T.1.2} and Remarks 2,3 after Theorem \ref{T.1.3}.
\smallskip

Next, we consider extensions of our results to solutions that are not necessarily between the equilibria which they connect. 
We first  need to extend Definitions \ref{D.1.1} and \ref{D.1.1b} in a physically relevant manner to such solutions (we will do so for general $u^\pm$).
Namely, we will consider $u$ to be $\eps$-close to $u^\pm$ at $(t,x)$ if $|u(t,y)-u^\pm(y)|<\eps$ for all $y$ in a  ball centered at $x$, whose size grows to $\infty$ as $\eps\to 0$.  It will therefore be useful to define for $A\subseteq\bbR^d$,
\[
\text{$r$-int}\, A:=\{x\in A \,|\, B_r(x)\subseteq A \}.
\]

\begin{definition} \lb{D.1.4} 
Let $u^\pm$ be equilibrium solutions of \eqref{1.1} with  bounded Lipschitz $f$, satisfying \eqref{1.19}.  For a solution $u$ of \eqref{1.1} on $(t_0,\infty)\times\bbR^d$,  define (for $\eps\in(0,\tfrac12)$)
\[
\Omega_{u,\eps}(t):=\left\{x\in\bbR^d\,\big|\, |u(t,x)-u^-(x)|\ge \eps \right\}, 
\]
\[
\Omega_{u,1-\eps}(t):= \left\{x\in\bbR^d\,\big|\, |u(t,x)-u^+(x)|\le \eps \right\}, 
\]
\beq \lb{1.3b}
L_{u,\eps}(t) := \inf \left\{L>0 \,\big|\, \Omega_{u,\eps}(t)\subseteq B_L \left(\text{$\tfrac 1\eps$-int}\, \Omega_{u,1-\eps}(t) \right) \right\},
\eeq
\[
L_{u,1-\eps}(t) := \inf \left\{L>0 \,\big|\, \bbR^d\setminus 
\Omega_{u,1-\eps}(t) \subseteq B_L \left( \text{$\tfrac 1{\eps}$-int}\,\left[ \bbR^d\setminus
\Omega_{u,\eps}(t) \right] \right) \right\}, 
\]
\[
J_{u,\eps}(t) := \inf \left\{L>0 \,\big|\, \bbR^d= B_L \left(\text{$\tfrac 1\eps$-int}\, \Omega_{u,1-\eps}(t)  \cup  \text{$\tfrac 1{\eps}$-int}\,\left[ \bbR^d\setminus \Omega_{u,\eps}(t) \right] \right) \right\}. 
\]
We say that $u$ has a {\it bounded width (with respect to $u^\pm$)}  
if \eqref{1.4} holds for any $\eps\in(0,\tfrac 12)$, a {\it doubly-bounded width} if  \eqref{1.4} holds for any $\eps\in(0,\tfrac 12)\cup (\tfrac 12,1)$,  and a {\it semi-bounded width}  
if \eqref{1.4f} holds for any $\eps\in(0,\tfrac 12)$.  Definition \ref{D.1.1b} remains the same but with these new $\Omega_{u,\eps}(t)$.
\end{definition}

Parabolic regularity and strong maximum principle show that if $u^-\le u\le u^+$, then this new definition of bounded/doubly-bounded/semi-bounded width is equivalent to the one using \eqref{1.3}--\eqref{1.3aa} and these new $\Omega_{u,\eps}(t)$ (which are those from \eqref{1.20a}, \eqref{1.20b} if also $u^-=0$).  In fact, while  the new $L_{u,\eps}(t)$ 
is larger than the original one for such $u$, it is finite for all $\eps\in(0,\tfrac 12)$ resp.~all $\eps\in(0,\tfrac 12)\cup(\tfrac 12,1)$ as long as the same is true for the original $L_{u,\eps}(t)$. 


Finally, let us extend the definition of {\it spark-like} and {\it front-like} initial data as follows.  We will assume that
either there are $x_0\in\bbR^d$,  $R_2\ge R_1>0$, and $\eps_1,\eps_2>0$ such that
\beq \lb{1.22}
(\tht_0+\eps_1) \chi_{B_{R_1}(x_0)}(x) - e^{-\eps_2(|x-x_0|-R_2)} \chi_{\bbR^d\setminus B_{R_1}(0)}(x)\le u_0(x)\le e^{-\eps_2(|x-x_0|-R_2)}
\eeq
(with $R_1$ sufficiently large, depending on $\eps_1,\eps_2,R_2-R_1$, to guarantee spreading),
or there are $e\in\bbS^{n-1}$, $R_2\ge R_1$, and $\eps_1,\eps_2>0$ such that
\beq \lb{1.23}
(\tht_0+\eps_1) \chi_{\{x\,|\,x\cdot e<R_1\}}(x) -e^{-\eps_2(x\cdot e-R_2)} \chi_{\{x\,|\,x\cdot e\ge R_1\}}(x) \le u_0(x)\le e^{-\eps_2(x\cdot e-R_2)}.
\eeq

\begin{theorem} \lb{T.1.12}
Consider the setting of Theorem \ref{T.1.11} but with $u_0$ only bounded.

(i) Theorem \ref{T.1.11}(i) holds with \eqref{1.5}/\eqref{1.6} replaced by \eqref{1.22}/\eqref{1.23}, provided that
in the case of \eqref{1.23}, ``$\le$'' is replaced by ``$<$'' in \eqref{1.24} for all $(f,u^+)\in\calF$.

(ii) Theorem \ref{T.1.11}(ii) holds, provided that 
``$\le$'' and ``$\ge$'' are replaced by ``$<$'' and ``$>$'' in \eqref{1.24} for all $(f,u^+)\in\calF$.
\end{theorem}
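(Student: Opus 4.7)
My plan is to reduce Theorem~\ref{T.1.12} to Theorem~\ref{T.1.11} by sandwiching $u$ between two solutions of \eqref{1.1} whose initial data at some time $t_0+T_1$ lie in $[0,u^+]$ and satisfy the spark-like \eqref{1.5} or front-like \eqref{1.6} condition (with slightly adjusted constants). The preliminary step is to quantify how quickly $u$ relaxes into a neighborhood of $[0,u^+]$. The key observation is that \eqref{1.24} together with $\Delta u^+ + f(\cdot,u^+)=0$ makes $w := (u-u^+)_+$ a weak subsolution of the heat equation, since on $\{u>u^+\}$ one has $w_t-\Delta w = f(x,u)-f(x,u^+(x))\le 0$, and analogously $v := (u)_-$ is a heat subsolution using $f(x,u)\ge 0$ on $\{u<0\}$.

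In the spark-like case \eqref{1.22}, both $w(t_0,\cdot)$ and $v(t_0,\cdot)$ are bounded by the exponential tail $e^{-\eps_2(|x-x_0|-R_2)}$ --- the former because $\inf u^+>\tht_0>0$ forces $w(t_0,\cdot)$ to be supported in a bounded set, the latter directly from the lower bound in \eqref{1.22} --- and hence lie in $L^1(\bbR^d)$. Standard heat-kernel estimates then give $\|w(t,\cdot)\|_\infty+\|v(t,\cdot)\|_\infty\to 0$ uniformly. In the front-like case \eqref{1.23} the data only have half-space support, so heat decay alone is insufficient; this is precisely why the strict version of \eqref{1.24} is required in Theorem~\ref{T.1.12}. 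Combining strict inequality with the closedness of $\calF$ under locally uniform convergence via a standard compactness-translation argument yields a uniform quantitative gap: for each $M>0$ there is $\gamma=\gamma(\calF,M)>0$ with $f(x,u^+(x))-f(x,u^+(x)+s)\ge \gamma s$ for $s\in[0,M]$, and analogously $f(x,-s)\ge \gamma s$ for $s\in[0,M]$. A direct computation using $\Delta u^+=-f(\cdot,u^+)$ then shows that $\bar V(t,x) := u^+(x)+Me^{-\gamma(t-t_0)}$ is a supersolution of \eqref{1.1} with $\bar V(t_0,\cdot)\ge u_0$ whenever $M\ge\|u_0\|_\infty$, so the comparison principle gives $u(t,x)\le u^+(x)+Me^{-\gamma(t-t_0)}$; the analogous subsolution $-Me^{-\gamma(t-t_0)}$ gives $u(t,x)\ge -Me^{-\gamma(t-t_0)}$. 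Either way, for any $\eta>0$ there exists $T_1=T_1(\eta,\calF,M)$ such that $-\eta\le u(t,x)\le u^+(x)+\eta$ for all $t\ge t_0+T_1$ and $x\in\bbR^d$.

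With this relaxation in hand, I would construct $\underline u,\bar u$ solving \eqref{1.1} from time $t_0+T_1$ with initial data in $[0,u^+]$ satisfying \eqref{1.5}/\eqref{1.6} and obeying $\underline u\le u\le\bar u$ thereafter; the data are built by truncating $u(t_0+T_1,\cdot)$ into $[0,u^+]$ and absorbing the $\eta$-excess above and below into a small shift together with the already-established exponential envelope. The inner bound $\ge \tht_0+\tfrac{\eps_1}{2}$ on a slightly smaller ball or half-space is preserved via parabolic Harnack from the stronger inner bound in \eqref{1.22}/\eqref{1.23}, and the exponential tail on $\bar u(t_0+T_1,\cdot)$ is inherited via the Step~1 supersolution bound on $(u-u^+)_+$. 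Theorem~\ref{T.1.11} then gives (C) for both $\underline u$ and $\bar u$ with constants depending only on $\eps$ and $\calF$, and the sandwich transfers the bounded-width conclusion to $u$: $\Omega_{u,\eps}(t)\subseteq\Omega_{\bar u,\eps}(t)$ by $u\le\bar u$, while $\Omega_{\underline u,1-\eps}(t)\subseteq\Omega_{u,1-\eps}(t)$ by $\underline u\le u$. The Hausdorff distance between the super-level sets of $\bar u$ and $\underline u$ is controlled by the time-shift trapping argument of Theorem~\ref{T.1.11}(ii), since $\underline u$ and $\bar u$ are themselves trapped between time-shifts of a single nice solution. The time-derivative lower bound and propagation-speed claim in (C) then follow by the same sandwich combined with parabolic regularity. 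Part (ii) of Theorem~\ref{T.1.12} is handled in exact parallel, with the hypotheses \eqref{3.6a} and $L_{u,\eps,1-\eps_0}(t_0)<\infty$ passing to $\underline u,\bar u$ after the transient. The main obstacle is Step~1 in the front-like case: converting the pointwise strict inequality in \eqref{1.24} into a uniform gap $\gamma>0$ genuinely requires the closedness of $\calF$ under locally uniform convergence, without which members of $\calF$ could degenerate to a reaction where the strict inequality becomes an equality in the limit.
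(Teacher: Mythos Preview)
Your high-level strategy (first quantify the relaxation of $u$ into a neighborhood of $[0,u^+]$, then reduce to Theorem~\ref{T.1.11}) is the same as the paper's, and your argument for \eqref{11.6} is essentially correct, up to one minor point: from the strict inequality in \eqref{1.24} and closedness of $\calF$ you only get $g(s):=\sup_{(f,u^+,x)}\bigl[f(x,u^+(x)+s)-f(x,u^+(x))\bigr]<0$ for $s>0$, not the linear bound $g(s)\le-\gamma s$. This is harmless (the ODE $\kappa'=g(\kappa)$ still forces $\kappa(t)\to 0$), but your exponential rate is not justified as stated.

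The real gaps are downstream. First, the sandwich construction does not work as written: if $u(t_0+T_1,\cdot)$ exceeds $u^+$ anywhere (even by $\eta$), there is \emph{no} solution $\bar u$ of \eqref{1.1} with $\bar u_0\in[0,u^+]$ and $\bar u\ge u$, because $\bar u\le u^+$ for all time. Truncation of the data moves you in the wrong direction. The paper resolves this by constructing explicit \emph{super-solutions} (not solutions) of the form $w(t+\beta(t),x)+e^{\eps_2^2 t-\eps_2(x_1-R_2)}$ with $w\in[0,u^+]$ a time-increasing solution and $\beta(t)=\tau-e^{-\eps_2^2 t}$; the exponential tail absorbs the overshoot and the $\beta'(t)w_t$ term compensates the reaction error. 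A matching \emph{sub-solution} $w(t-1+e^{-\eps_2^2 t},x)-e^{\eps_2^2 t-\eps_2(x_1-r)}$ is built in the remark at the end of Section~\ref{S4a}; it is only a sub-solution on the region where either \eqref{5.8} holds or $v\ge u^+(x)-\tht$, and the paper uses the spreading estimate \eqref{11.8} to verify that $u$ dominates it there. The output is then precisely the sandwich \eqref{5.2}, after which the (H') version of Theorem~\ref{T.5.1} is invoked directly.

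Second, the $u_t$ lower bound in (C) cannot be deduced from a sandwich: $\underline u\le u\le\bar u$ together with $\underline u_t,\bar u_t\ge m$ implies nothing about $u_t$. The paper obtains it inside Theorem~\ref{T.5.1} by a compactness argument that passes to an entire solution with bounded width, applies Theorem~\ref{T.1.5}(ii) to get $u_t>0$ for the entire solution, and then Corollary~\ref{C.3.3a}. Your phrase ``the same sandwich combined with parabolic regularity'' does not supply this mechanism. For part~(ii), note also that \eqref{3.6a} gives $u_t\ge 0$ and hence $u\le u^+$ from the start; the paper uses this together with the strict inequalities to get the lower relaxation, and then runs the proof of Theorem~\ref{T.1.11}(ii) with $c_0$ replaced by $c_\gamma$ for $\gamma$ close to $0$, rather than constructing separate $\underline u,\bar u$.
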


{\it Remarks.}  1.  The extra condition in (i) guarantees  $\limsup_{t\to \infty} \sup_{x\in\bbR^d} [u(t,x)-u^+(x)]\le 0$ for any bounded $u_0$, uniformly in $\calF$.  This as well as (i) also hold for  from \eqref{1.23} if instead we assume $\limsup_{x\cdot e\to -\infty} [u_0(x)-u^+(x)]\le 0$, but then $T_\eps,T_{\eps,\del}$ in (i) depend on $u_0$ also via the rate of this decay (cf. Remark 1 after Theorem \ref{T.1.11}).

\smallskip

2.  The extra condition in (ii) guarantees $\limsup_{t\to \infty} \sup_{x\in\bbR^d} [u(t,x)-u^+(x)]\le 0$ and $\liminf_{t\to \infty} \inf_{x\in\bbR^d} u(t,x)\ge 0$ for any bounded $u_0$, uniformly in $\calF$.  
\smallskip


3.  Theorems \ref{T.1.2}(i) and \ref{T.1.3} extend similarly to solutions $u$ not necessarily in $[0,1]$.
\smallskip

\bigskip
\noindent
{\bf Entire Solutions with Bounded Widths}
\smallskip
\smallskip

Finally, let us turn to the discussion of the above-mentioned  entire solutions of \eqref{1.1}.  

\begin{definition} \lb{D.1.4a}
Let $u^\pm$ be equilibrium solutions of \eqref{1.1} satisfying \eqref{1.19}.  A {\it transition solution (connecting $u^-$ to $u^+$)} for \eqref{1.1} is a bounded entire 
solution $u$ of \eqref{1.1} which satisfies
\beq \lb{1.8}
\lim_{t\to\pm\infty} u(t,x) = u^\pm(x)
\eeq
locally uniformly on $\bbR^d$.  
\end{definition}

As above, we will assume $u^-\equiv 0$ without loss in the following.

\begin{theorem} \lb{T.1.5}
Let $u^-\equiv 0$ and  $u^+$ satisfy \eqref{1.19} and be equilibrium solutions of \eqref{1.1} with some Lipschitz $f$, satisfying \eqref{1.24} (but not necessarily (H')).  Let $u\not\equiv 0,u^+$ be a bounded entire solution of \eqref{1.1} which has a bounded width with respect to $0,u^+$.

(i) We have $0<u<u^+$.

(ii)  If $u$ propagates with a positive global mean speed, then $u$ is a transition solution.
 If,  in addition, there is  $\tht>0$ such that $f$ is non-increasing in $u$ on $[0,\tht]$ and on $[u^+(x)-\tht,u^+(x)]$ for each $x\in\bbR^d$, then $u_t>0$. 

(iii) Assume  $f_0,K,$ and $\tht>0$ are as in (H') and $\eta>0$, $\zeta\in(0,c_0^2/4)$, $\calF \subseteq F'(f_0,K,0,\zeta,\eta)$  is spatially translation invariant and closed with respect to locally uniform convergence. If $(f,u^+)\in\calF_\tht$,  
then  (C) holds for $u$, with $t_0+T_\eps$ replaced by $-\infty$ and $1-\eps$ by $u^+(x)-\eps$ in \eqref{1.7},  with $\ell_\eps,m_\eps$ depending only on $\eps,\calF$, and  $\tau_{\eps,\delta}$ in Definition \ref{D.1.1b} also depending on  $\delta,f_1$.
\end{theorem}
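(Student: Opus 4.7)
\smallskip

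\noindent\emph{Part (i).} To see $u\ge 0$, set $m:=\inf_{(t,x)}u(t,x)$ and suppose $m<0$ for contradiction. Choose $(t_n,x_n)$ with $u(t_n,x_n)\to m$ and set $v_n(t,x):=u(t+t_n,x+x_n)-m\ge 0$; by parabolic regularity $\{v_n\}$ is locally precompact. Since \eqref{1.24} gives $f(x,m)\ge 0$, the Lipschitz bound on $f$ yields the $x$-independent inequality $(v_n)_t-\Delta v_n+Kv_n\ge 0$, so any local limit $\tilde v\ge 0$ with $\tilde v(0,0)=0$ is identically zero on $(-\infty,0]\times\bbR^d$ by the strong minimum principle. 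Consequently, for each $R>0$ one has $u(t_n,\cdot)<m/2<0$ on $B_R(x_n)$ when $n$ is large, placing $B_R(x_n)\subseteq\Omega_{u,|m|/2}(t_n)$ yet $B_R(x_n)\cap\Omega_{u,1-|m|/2}(t_n)=\emptyset$ (since $|u-u^+|\ge u^+-m\ge\tht_0>|m|/2$ provided $|m|<2\tht_0$). Bounded width at level $\eps:=|m|/2$ then demands a $y_n\in B_{L^{u,\eps}}(x_n)$ with $B_{1/\eps}(y_n)\subseteq\Omega_{u,1-\eps}(t_n)$, forcing the disjointness $|y_n-x_n|\ge R+1/\eps$ --- impossible once $R>L^{u,\eps}$. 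Symmetrically, $f(x,u)\le f(x,u^+(x))$ for $u>u^+(x)$ yields $u\le u^+$. The strict inequalities $0<u<u^+$ then follow from the strong maximum principle applied to $u$ and to $u^+-u$ (each a non-negative solution of an appropriate linear parabolic equation), since $u\not\equiv 0,u^+$.

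\smallskip

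\noindent\emph{Part (ii).} For the transition behavior as $t\to-\infty$, suppose $\limsup_{t\to-\infty}u(t,x_0)>\eps_0$ for some $\eps_0>0$ and $x_0\in\bbR^d$, and pick $t_n\to-\infty$ with $u(t_n,x_0)>\eps_0$. For any $\eps\in(0,\eps_0]$ and any fixed $s\in\bbR$, the first inclusion in Definition~\ref{D.1.1b} with $\tau=s-t_n$ gives $B_{(c-\delta)(s-t_n)}(x_0)\subseteq\Omega_{u,1-\eps}(s)$, so $\Omega_{u,1-\eps}(s)=\bbR^d$ upon sending $n\to\infty$, i.e.\ $u(s,y)\ge u^+(y)-\eps$ everywhere. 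Letting $\eps\to 0$ and combining with $u\le u^+$ from Part~(i) yields $u\equiv u^+$, contradicting the hypothesis. Hence $u(t,x_0)\to 0$ as $t\to-\infty$ for each $x_0$, and local uniformity follows from parabolic equicontinuity. For $t\to+\infty$, pick any $(t_*,y_0)$ with $u(t_*,y_0)>\eps$ (exists since $u\not\equiv 0$); the same first inclusion yields $\Omega_{u,1-\eps}(t)\supseteq B_{(c-\delta)(t-t_*)}(y_0)$ for $t>t_*+\tau_{\eps,\delta}$, covering any fixed $x_0$ for $t$ large, so $u(t,x_0)\to u^+(x_0)$. Thus $u$ is a transition solution. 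Monotonicity follows by adapting the sliding method of \cite{BH3} to the bounded-width setting: with $\phi^h(t,x):=u(t+h,x)-u(t,x)$ for $h>0$ solving a linear parabolic equation with bounded coefficients and vanishing locally uniformly as $t\to\pm\infty$ by the transition property just established, the non-increasing hypotheses of $f$ on $[0,\tht]$ and $[u^+-\tht,u^+]$ enable the maximum principle near each equilibrium. A translation-limit / strong-minimum-principle extraction then rules out $\inf\phi^h<0$, giving $u_t\ge 0$; strict positivity $u_t>0$ follows from the strong maximum principle applied to $w:=u_t$, which solves $w_t=\Delta w+f_u(x,u)w$.

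\smallskip

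\noindent\emph{Part (iii).} With $u_t>0$ from Part~(ii) and $u(t_0,\cdot)\in[0,u^+]$ from Part~(i), each initial slice satisfies \eqref{3.6a} (since $\Delta u(t_0,\cdot)+f(\cdot,u(t_0,\cdot))=u_t(t_0,\cdot)\ge 0$), and bounded width supplies $L_{u,\eps,1-\eps_0}(t_0)<\infty$ for every $\eps>0$ (with $\eps_0$ from Lemma~\ref{L.11.1}). Theorem~\ref{T.1.11}(ii) therefore applies to $u$ restricted to $[t_0,\infty)\times\bbR^d$ and provides $\ell_\eps,m_\eps,\tau_{\eps,\delta}$ depending only on $\eps,\calF$ (and on $\delta,f_1$ for $\tau_{\eps,\delta}$) such that the bounds in \eqref{1.7} (with $u^+(x)-\eps$ replacing $1-\eps$) and the global mean speed property hold for every $t>t_0+T_\eps$. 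Since $t_0\in\bbR$ is arbitrary, letting $t_0\to-\infty$ extends these conclusions to all $t\in\bbR$, yielding (C) with $t_0+T_\eps$ replaced by $-\infty$.

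\smallskip

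The principal obstacle is in Part~(i) and in the monotonicity claim of Part~(ii): $f$ is not assumed translation-invariant, so the nonlinear PDE is not preserved by passing to translation limits. The compactness arguments must therefore be run on $x$-independent scalar parabolic inequalities delivered by the Lipschitz bound on $f$ and the monotonicity hypotheses near each equilibrium, with bounded width supplying the purely geometric contradiction about super-level sets.
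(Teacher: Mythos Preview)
Your Part~(i) is correct and is a genuinely different route from the paper's.  The paper shifts the full triple $(f,u^+,u)$ by $(-t_n,-x_n)$, extracts a locally uniform limit $(\tilde f,\tilde u^+,\tilde u)$ with $\tilde u\le \tilde u^++m_1$ touching $\tilde u^++m_1$ at the origin, and then applies the strong maximum principle using that $\tilde u^++m_1$ is a supersolution of the limiting nonlinear equation.  You avoid shifting $f$ altogether by passing instead to the $x$-independent linear inequality $(v_n)_t-\Delta v_n+Kv_n\ge 0$ and then reading off a contradiction directly from the geometry encoded in bounded width.  Both work; yours is a bit more hands-on but has the pleasant feature of never needing the limit to solve a PDE.  (One cosmetic fix: the theorem does not assume (H'), so $\tht_0$ is undefined --- replace it by $\inf_x u^+(x)>0$, which is what \eqref{1.19} gives you, and choose $\eps<\tfrac12\min\{|m|,\inf u^+,1\}$.)

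The transition-solution half of Part~(ii) is exactly the paper's argument.  The monotonicity half, however, has a real gap.  Your sketch proposes to run a one-shot sliding argument on $\phi^h$, using that $\phi^h\to 0$ locally uniformly as $t\to\pm\infty$ and that the monotonicity of $f$ near $0$ and $u^+$ ``enables the maximum principle near each equilibrium.''  The trouble is the middle region: where $u(t,x)\in(\tht,u^+(x)-\tht)$ you have only $\phi^h_t-\Delta\phi^h=c(t,x)\phi^h$ with $|c|\le K$, and the function $w:=\phi^h+\eps\ge 0$ does \emph{not} satisfy a clean differential inequality (the forcing $-c\eps$ has no sign), so the strong minimum principle does not apply to it.  The paper resolves this with a two-step sliding (Lemmas~\ref{L.8.2} and~\ref{L.8.3}): first, the \emph{positive global mean speed} hypothesis gives $\Omega_{u,\eps_0}(t)\subseteq\Omega_{u,1-\eps_0}(t+s)$ for all $s\ge s_0$, which yields the pointwise comparison
\[
u^s(t,x)\ge u(t,x)\quad\text{whenever }u(t,x)\in[\eps_0,u^+(x)-\eps_0],
\]
i.e.\ negative values of $\phi^s$ can only occur where both $u$ and $u^s$ lie in $[0,2\eps_0]\cup[u^+-2\eps_0,u^+]$, precisely the set on which the monotonicity of $f$ gives $f(x,u^s)\ge f(x,u)$ and hence the usable inequality $v_t\ge\Delta v-Kv$ for $v:=u^s+\eps-u$.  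This pins down $u^s\ge u$ for $s\ge s_0$, and only then does one slide $s$ down to $0$.  Your ``vanishing as $t\to\pm\infty$'' observation, being only locally uniform in $x$, does not produce this middle-region control (the infimizing sequence can escape to spatial infinity with $t_n$ bounded), so the argument as written does not close.

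For Part~(iii), your plan via Theorem~\ref{T.1.11}(ii) is essentially what the paper does via the (H') version of Corollary~\ref{C.3.3a}(ii), and there is no circularity (that corollary, and likewise the ``$u$'' half of Theorem~\ref{T.1.11}(ii), do not use Theorem~\ref{T.1.5}).  But you invoke $u_t>0$ ``from Part~(ii),'' and Part~(ii) has positive global mean speed as a \emph{hypothesis}, not a conclusion.  Under the assumptions of~(iii) this is easy to supply --- bounded width together with Lemma~\ref{L.11.1} immediately gives the first inclusion in \eqref{1.3g} with any $c<c_0$, exactly as in the paper's one-line proof of~(iii) --- but it is a step you must write down before calling on~(ii).
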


{\it Remarks.} 
1.  (i,ii) were proved in  \cite[Theorem 1.11]{BH3}, in a more general setting 
and for a smaller class of entire solutions called {\it invasions}. The latter have doubly-bounded widths and their reaction zones satisfy an additional geometric requirement  (see the discussion below). Our proof proceeds along similar lines, using a version of the sliding method.
\smallskip

2.  (ii) will play a crucial role in the proofs of Theorems \ref{T.1.2}(i), \ref{T.1.3}, \ref{T.1.11}, and \ref{T.1.12}.
\smallskip

3.  Notice that as long as $u$ has a bounded width in (iii),  we actually have the $u$-independent bound $\sup_{t\in\bbR} L_{u,\eps}(t)\le \ell_\eps$.
\smallskip

The hypothesis \eqref{1.24} is necessary in Theorem \ref{T.1.5},  even for homogeneous $f$ and $d=1$.  It is well known that, for instance, if $0\le f(u)\le f'(0)u$ for $u\in[0,1]$ (i.e., $f$ is a {\it KPP reaction} with $f'(0)>0$) and $f(u)=f'(0)u$ for $u<0$, then for any $c\in(0,2\sqrt{f'(0)})$ there is a traveling front solution $u(t,x)=U_c(x-ct)$ of \eqref{1.1} on $\bbR\times\bbR$ with $\lim_{s\to-\infty}U_c(s)=1$, $\lim_{s\to\infty}U_c(s)=0$, and $\inf_{s\in\bbR} U_c(s)<0$.  This solution satisfies neither (i) nor (ii).  Counter-examples with ignition $f$ also exist.
\smallskip

Theorem \ref{T.1.5} suggests a couple of interesting questions.
\smallskip

{\it Open problems.}  
1.  Does $u_t>0$ hold in Theorem \ref{T.1.5}(ii) when $\tht=0$?
\smallskip

2.  Does Theorem \ref{T.1.5}(ii) and/or Theorem \ref{T.1.5}(iii) hold if we drop the hypotheses of bounded width and positive global mean speed and instead only assume that $u\in[0,u^+]$ is a transition solution?  Of course, bounded width and positive global mean speed would then follow from the claim of Theorem \ref{T.1.5}(iii).
\smallskip



A natural question is whether solutions considered in Theorem \ref{T.1.5} must always exist. 
The following result answers this in the affirmative under the hypotheses of Theorem \ref{T.1.11}, even when $\tht=0$ in (H').
It also shows that transition solutions with {\it doubly-bounded width} need not exist for $d\ge 2$ even for ignition reactions, as was discussed in the introduction.

\begin{theorem} \lb{T.1.6}
(i) If $(f,u^+)\in\calF$, with $\calF$ as in Theorem \ref{T.1.11} (so $\tht=0$), then there exists a transition solution $u\in(0,u^+)$ for \eqref{1.1} with $u_t>0$ and  a bounded width. 

(ii) If $d\ge 2$, then there exists $f$ as in Theorem \ref{T.1.2} such that any bounded entire solution $u\not\equiv 0,1$ for \eqref{1.1} with bounded width is a transition solution $u\in(0,1)$ satisfying $u_t>0$ and $\lim_{t\to\infty} \inf_{x\in\bbR^2} u(t,x)=1$.
In particular, there exists no transition solution with a doubly-bounded width for \eqref{1.1} (and hence also no transition front --- see the discussion below). 
\end{theorem}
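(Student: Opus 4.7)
I would construct $u$ as a limit of time-monotone Cauchy solutions governed by Theorem \ref{T.1.11}. Fix $x_0 \in \bbR^d$ and $c \in (0, \inf_y u^+(y))$; for each $n \in \bbN$, pick a front-like $u_{0,n}$ satisfying \eqref{3.6a} and supported far from $x_0$, so the Cauchy solution $v_n$ on $(-n, \infty) \times \bbR^d$ satisfies $(v_n)_t \ge 0$. By Theorem \ref{T.1.11}(i) in its $\tht = 0$ extension (Remark 2 there), $L_{v_n, \eps}(t) \le \ell_\eps$ for $t \ge -n + T_\eps$, with $\ell_\eps$ depending only on $\eps$ and $\calF$. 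Choose time-shifts $s_n$ so that $v_n(s_n, x_0) = c$, set $u_n(t, x) := v_n(t + s_n, x)$, and extract a $C^{1,2}_{\mathrm{loc}}$-convergent subsequence $u_n \to u$ via parabolic regularity. The $u$-uniform width bound and $u_t \ge 0$ pass to the limit; the strong maximum principle for the linearized equation $v_t = \Delta v + f_u(x, u)v$ satisfied by $v := u_t \ge 0$ gives $u_t > 0$, unless $u$ is a nontrivial bounded-width equilibrium in $(0, u^+)$, which the hypothesis $\calF \subseteq F'(f_0, K, 0, \zeta, \eta)$ together with \eqref{1.21} rules out. Monotonicity produces equilibria $u^\pm_\infty(x) := \lim_{t\to\pm\infty} u(t, x)$ (locally uniformly) sandwiched by $0$ and $u^+$; the same \eqref{1.21}-argument, combined with the strong maximum principle applied to $u^+ - u^+_\infty$, forces $u^-_\infty \equiv 0$ and $u^+_\infty \equiv u^+$, so $u$ is a transition solution.

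\textbf{Plan for (ii).} I would construct $f$ on $\bbR^d$ ($d \ge 2$) as in Theorem \ref{T.1.2}, namely satisfying (H) with $\tht > 0$ and $f(x, u) = 0$ on $\bbR^d \times [0, \tht_0]$, with $f \ge f_0$ for some homogeneous ignition $f_0$ of speed $c_0 > 0$, and with an engineered array of slow-reaction ``obstacles'' so the reaction goes around them faster than through them (as illustrated in Figure \ref{fig}), while the medium is sufficiently disordered to preclude any traveling or pulsating front. For any bounded entire $u \not\equiv 0, 1$ with bounded width, Theorem \ref{T.1.5}(i) gives $0 < u < 1$; comparison from below with $f_0$-fronts launched at points of $\Omega_{u, \eps}(t)$, combined with bounded width to connect the resulting value-$(1{-}\eps)$ region back to $\Omega_{u, \eps}$, yields positive global mean speed, so Theorem \ref{T.1.5}(ii) (applicable since $\tht > 0$) delivers $u_t > 0$ and the transition property. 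Setting $m(t) := \inf_x u(t, x)$, the estimate $u(t_2, x) > u(t_1, x) \ge m(t_1)$ for $t_2 > t_1$ gives $m$ nondecreasing with a limit $m_\infty \in (0, 1]$. Choosing sequences $t_n \to \infty$ and $x_n$ with $u(t_n, x_n) \to m_\infty$, parabolic regularity and compactness produce a bounded entire solution $u_\infty$ of \eqref{1.1} for a translated reaction (still in the constructed family by its closure properties) satisfying $\inf u_\infty \equiv m_\infty$ attained at $(0, 0)$. The construction is arranged precisely so that such a ``stuck'' entire solution cannot exist unless $m_\infty = 1$: any $m_\infty \in (0, 1)$ would have to locally violate $f_0$-driven propagation (since $f \ge f_0$ drives combustion up to $1$), pushing $u_\infty$ strictly above $m_\infty$ somewhere and contradicting the equality $\inf u_\infty \equiv m_\infty$.

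\textbf{No doubly-bounded transition solution.} Suppose $u$ is a transition solution with doubly-bounded width. Then $u$ has bounded width, so by (ii) we have $u_t > 0$ and $m(t) \to 1$ as $t \to +\infty$. For any fixed $x_0$, $u(t, x_0) \to 0$ as $t \to -\infty$ (transition property), hence $m(t) \le u(t, x_0) \to 0$. Parabolic regularity for bounded entire solutions yields $\|u_t\|_\infty < \infty$, so $m$ is Lipschitz in $t$; the intermediate value theorem produces $t^* \in \bbR$ with $m(t^*) = 1/2$. At that time $\{u(t^*, \cdot) < 1/4\} = \emptyset$ while $\{u(t^*, \cdot) < 3/4\} \ne \emptyset$ (the latter because $m(t^*) = 1/2 < 3/4$), forcing $L_{u, 3/4}(t^*) = \infty$ via \eqref{1.3a}, contradicting doubly-bounded width.

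\textbf{Main obstacle.} The decisive technical difficulty is the construction of $f$ in (ii) together with the proof that every bounded-width entire solution forces $\inf_x u(t, x) \to 1$. The construction must simultaneously (a) generate unburned pockets arbitrarily far behind the leading edge for typical Cauchy solutions, so that bounded width is a genuinely restrictive condition; (b) prevent any traveling, pulsating, or generalized front with $\inf u$ bounded away from $1$; and (c) exclude bounded entire solutions whose infimum stabilizes at $m_\infty \in (0, 1)$. The last step reduces to a non-existence statement for bounded entire solutions of \eqref{1.1} with constant infimum and lower bound $f \ge f_0$, and its verification is where the geometric design of the obstacles, together with a careful compactness-and-translation argument for $u_\infty$, must be executed.
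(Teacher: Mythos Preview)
For (i) your outline matches the paper's argument, with one correction: \eqref{1.21} does not exclude nontrivial equilibria $p\in(0,u^+)$ --- it only constrains where $p>\alpha_f$ --- so you cannot use it to identify $u^\pm_\infty$. The paper instead gets both $u_t>0$ and the transition property in one stroke from Theorem~\ref{T.1.5}(ii), since bounded width plus Lemma~\ref{L.11.1} yield positive global mean speed. Your deduction of ``no doubly-bounded width'' from the conclusion of (ii) is fine.

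The genuine gap is in (ii). Your compactness scheme cannot force $m_\infty=1$: nothing rules out $m_\infty=0$ (bounded width is perfectly compatible with $\inf_x u(t,\cdot)=0$ for every $t$, as for any front-like solution), and even granting $m_\infty>0$, the minimum-point argument ``$f\ge f_0$, so $f(0,m_\infty)>0$ forces $u_\infty$ up'' only excludes $m_\infty\in(\theta_0,1)$, not $m_\infty\in(0,\theta_0]$. More importantly, you never give the construction, and ``slow obstacles'' is the wrong mechanism. The paper takes $f(x,u)=a(|x|)f_0(u)$ with $a\in[1,\beta]$ equal to $\beta\gg1$ on thin annuli at radii $2^n$ and equal to $1$ elsewhere --- \emph{fast} annuli, not slow pockets --- and argues directly at a single time. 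Let $t_0$ be the first time $u(t_0,0)=\tfrac12$ and $t_n$ the first time $\sup_{|x|\le 2^n}u=\eps_0$. Bounded width produces a point with $u\ge 1-\eps_0$ on the annulus $|x|\approx 2^n$ shortly after $t_n$; the fast reaction there wraps $u\ge\tfrac34$ around the full annulus in time $O(2^n/100)$, and then $f_0$-comparison fills $B_{2^n}\setminus B_{2^{n-1}}$ in further time $O(2^{n-1}/c_0)$. Meanwhile, propagation from $|x|=2^n$ inward to the origin through the slow bulk takes at least $2^n/(1.1c_0)$, which (since $c_0\le 2$) exceeds the previous sum. Hence all this happens before $t_0$, so $u(t_0,\cdot)\ge\tfrac34$ on $B_{2^n}\setminus B_{2^{n-1}}$ for all large $n$, i.e., outside a compact set, and Lemma~\ref{L.2.1} gives $\inf_x u(t,\cdot)\to 1$. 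No limiting entire solution $u_\infty$ is needed; the bounded-width hypothesis is used not for compactness but to ignite each fast annulus once the reaction zone first touches it.
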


{\it Remarks.}  1. The hypothesis $\zeta<c_0^2/4$ is at least qualitatively necessary in (i), as counterexamples  with $\zeta >c_0^2/2$ exist even for $d=1$  \cite{NRRZ}.  
\smallskip

2. Note that for $d=1$, transition fronts always exist under the hypotheses in (ii) \cite{ZlaGenfronts}.  The first example of non-existence of fronts was given in \cite{NRRZ} for {\it KPP reactions} (and $d=1$).  It is based on the construction of $f$ for which the equilibrium $u\equiv 0$ is {\it strongly unstable} in some region of space, so that arbitrarily small amounts of heat diffusing far ahead of the reaction zone quickly ignite on their own inside this region. (ii) is the first non-existence result for ignition reactions (so it does not rely on this strong instability property of KPP reactions).
\smallskip

Before proving the above results, let us note that while the concepts of bounded and semi-bounded width of solutions to \eqref{1.1}
are new for $d\ge 2$, 
the concept of doubly-bounded width
is closely related to 
the Berestycki-Hamel definition of transition fronts from \cite{BH2,BH3}, which motivated this work.
The latter definition is more geometric in nature and its scope is slightly different from ours.  It involves entire solutions  rather than solutions of the Cauchy problem,  and is also stated for wider classes of PDEs and spatial domains, and vector-valued solutions with possibly time-dependent coefficients and $u^\pm$.  This is beyond the scope of the present paper (although the corresponding generalizations are rather straightforward), so we will only discuss the case at hand: \eqref{1.1} on $\bbR\times\bbR^d$ with bounded Lipschitz $f$ and time-independent $u^\pm$ satisfying \eqref{1.19}.  

In this setting, the definition in \cite{BH3} says that a {\it transition front} connecting $u^-$ and $u^+$ 
is an entire solution $u$ such that for each $t\in\bbR$ there are open non-empty sets $\Omega_t^\pm\subseteq\bbR^d$ satisfying
\beq \lb{1.11}
\Omega_t^-\cap\Omega_t^+=\emptyset, \quad \partial\Omega_t^-=\partial\Omega_t^+=:\Gamma_t, \quad \Omega_t^-\cup\Gamma_t\cup\Omega_t^+=\bbR^d,
\eeq
\beq \lb{1.13}
\sup \left\{ d(y,\Gamma_t)\,\big|\, y\in  \Omega_t^\pm\cap \partial B_r(x) \right\} \to \infty \text{ as $r\to\infty$, uniformly in $t\in\bbR $ and $x\in\Gamma_t$,}
\eeq
\beq \lb{1.14}
u(t,x)-u^\pm(x) \to 0 \text{ as $d(x,\Gamma_t)\to\infty$ and $x\in\Omega_t^\pm$, uniformly in $t\in\bbR$,}
\eeq
and there is $n\ge 1$ such that for each $t\in\bbR$,
\beq \lb{1.15}
\text{$\Gamma_t$ is a subset of $n$ (rotated in $\bbR^d$) graphs of functions from $\bbR^{d-1}$ to $\bbR$.}
\eeq

While we have to forgo geometric conditions, such as \eqref{1.15}, in our definitions (as was explained earlier), it is not difficult to see that \eqref{1.11}--\eqref{1.14}
for an entire solution $u\not\equiv u^\pm$ are in fact {\it equivalent} to $u$ having a doubly-bounded width!   Indeed, if $u\not\equiv u^\pm$ has a doubly-bounded width (in the sense of Definition \ref{D.1.4} if $u\notin[u^-,u^+]$), one only needs to take 
\beq \lb{1.16}
\Omega_t^+:= {\rm int}\left\{ x\in\bbR^d \,\Bigg|\, u(t,x)\ge \frac {u^+(x)+u^-(x)}2 \right\},
\eeq
 $\Gamma_t:=\partial\Omega_t^+$, and $\Omega_t^-:=\bbR^d\setminus\bar\Omega_t^+$.  (Of course, the sets $\Omega_t^\pm,\Gamma_t$ from \eqref{1.11} are not unique!)
On the other hand, when \eqref{1.11}--\eqref{1.14} holds, it is easy to see that $\Gamma_t$ and the boundary of the set from \eqref{1.16} are within a (uniformly in $t$) bounded distance of each other.
So transition fronts are precisely those entire solutions with doubly-bounded widths which also satisfy \eqref{1.15}.  In particular, Theorems \ref{T.1.5} and \ref{T.1.6}(ii) apply to them, the latter also showing that transition fronts need not always exist in dimensions $d\ge 2$.

We note that the condition \eqref{1.8} for our transition solutions also has a counterpart  in \cite{BH3}.  There an {\it invasion} of $u^-$ by $u^+$ is defined to be a transition front connecting $u^\pm$ for which  
\beq \lb{1.17}
\text{$\Omega_s^+\subseteq \Omega_t^+$ when $s\le t$} \qquad\text{and}\qquad \lim_{r\to\infty} \inf_{|t-s|=r} d(\Gamma_t,\Gamma_s)= \infty.
\eeq
This condition, together with \eqref{1.11}--\eqref{1.14}, implies \eqref{1.8} but is stronger than our definition of transition solutions with doubly-bounded widths. Nevertheless, if we relax \eqref{1.17} to the existence of $T$ such that
\beq \lb{1.18}
\text{$\Omega_s^+\subseteq \Omega_t^+$ when $s+T\le t$} \qquad\text{and}\qquad \lim_{r\to\infty} \inf_{|t-s|=r} d(\Gamma_t,\Gamma_s)= \infty,
\eeq
then  \eqref{1.11}--\eqref{1.14}, \eqref{1.18} are in fact {\it equivalent} to our definition of transition solutions with doubly-bounded widths which also propagate with a positive global mean speed.  Indeed, notice that \eqref{1.18} implies that $\inf_{|t-s|=r} d(\Gamma_t,\Gamma_s)$ grows at least linearly as $r\to\infty$, so we can again use \eqref{1.16} to define $\Omega_t^+$.

\vbox{
\bigskip
\noindent
{\bf Organization of the Paper and Acknowledgements}
\smallskip
\smallskip

In Section \ref{S2} we prove some preliminary results.  Section \ref{S3} is the heart of the argument proving bounded widths of solutions for $d\le 3$ (the proof of Lemma~\ref{L.3.2} is considerably more complicated for $d=3$, so it is postponed until Section \ref{S5}).
Theorem \ref{T.1.3}(i) will then be obtained in the short Section \ref{S4}, and a more involved argument (along with Theorem \ref{T.1.5}(ii)) will be needed to prove Theorem~\ref{T.1.3}(ii) and Theorem \ref{T.1.2}(i) in Section \ref{S4a}.  All these arguments are extended in Section \ref{S11} to obtain proofs of Theorems \ref{T.1.11} and \ref{T.1.12}, and in Section \ref{S8} we prove Theorem \ref{T.1.5} (the proof of its parts (i,ii) only uses Lemma \ref{L.8.1} below and, in particular, not the results proved in Section \ref{S4a}).  In Section \ref{S6} we prove Theorem \ref{T.1.2}(ii) by means of a counter-example (the proof is also independent of the rest of the paper) and  Theorem~\ref{T.1.6} is proved in Section \ref{S7}.
}

The author thanks \' Arp\' ad Baricz, Henri Berestycki, Fran\c cois Hamel, and Hiroshi Matano for helpful discussions and comments.  He also acknowledges partial support  by NSF grants DMS-1056327, DMS-1113017,  and DMS-1159133.

\section{Preliminaries (case $u^+\equiv 1$)} \lb{S2}

In Chapters \ref{S2}--\ref{S5} we consider the setting of Theorems \ref{T.1.2} and \ref{T.1.3}, with $f_0,K,\tht$ as in (H), $u^+\equiv 1$,  and $u\in[0,1]$.  We will extend the results below to the setting of (H') in Chapter \ref{S11}.

Let us start with some useful preliminary lemmas.  

\begin{lemma} \lb{L.2.1}
There is $\eps_0=\eps_0(f_0,K)> 0$ such that for each $c<c_0$ and $\eps>0$ there is $\tau=\tau(f_0,K,c,\eps)\ge 0$ such that the following holds.  If $u\in[0,1]$ solves \eqref{1.1}, \eqref{1.2} with $f$ from (H), and $u(t_1,x)\ge 1-\eps_0$ for some $(t_1,x)\in [t_0+1,\infty)\times\bbR^d$, then for each $t\ge t_1+\tau$,
\beq \lb{2.1}
\inf_{|y-x|\le c(t-t_1)} u(t,y) \ge 1-\eps.
\eeq
The same result holds if the hypothesis $u(t_1,x)\ge 1-\eps_0$ is replaced by
\beq\lb{2.1a}
u(t_1,\cdot) \ge \frac {1+\tht_0}2 \chi_{B_R(x)}(\cdot)
\eeq
 for some $(t_1,x)\in [t_0,\infty)\times\bbR^d$ and a large enough $R=R(f_0)>0$.
\end{lemma}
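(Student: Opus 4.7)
\emph{Plan.} The plan is to prove the second assertion first by direct comparison with the homogeneous reaction $f_0$, and then to deduce the first assertion from the second via a compactness/strong-maximum-principle argument that trades the ``height'' information $u(t_1,x)\ge 1-\eps_0$ for the ``width'' information that the second assertion needs.

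For the second assertion, the parabolic comparison principle together with $f(\cdot,u)\ge f_0(u)$ yields $u(t_1+s, y)\ge w(s, y-x)$ for $s\ge 0$, where $w$ solves $w_s=\Delta w+f_0(w)$ on $\bbR^d$ with $w(0,\cdot)=\tfrac{1+\tht_0}{2}\chi_{B_R(0)}$. Taking $R=R(f_0)$ large enough, the classical Aronson--Weinberger spreading theorem for homogeneous ignition reactions \cite{AW} supplies $w(s,z)\ge 1-\eps$ on $\{|z|\le cs\}$ for all $s\ge \tau(f_0,c,\eps)$, and this transfers to $u$ via the above comparison, giving \eqref{2.1}.

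For the first assertion, I would prove the following reduction: there exist $\eps_0=\eps_0(f_0,K)>0$ and $\sigma_0=\sigma_0(f_0,K)>0$ such that $u(t_1,x)\ge 1-\eps_0$ (together with $t_1\ge t_0+1$) forces $u(t_1+\sigma_0,\cdot)\ge \tfrac{1+\tht_0}{2}$ on $B_R(x)$; the second assertion applied at time $t_1+\sigma_0$ then concludes the proof, with $\tau$ enlarged by $\sigma_0$. I would establish this reduction by contradiction: if it fails, then for each $n$ there exist $f_n$ satisfying (H) with the given $f_0,K$, a solution $u_n$ of the PDE, and points $(t_1^n,x_n)$ with $t_1^n\ge t_0^n+1$ and $u_n(t_1^n,x_n)\ge 1-\tfrac 1n$, while $u_n(t_1^n+\sigma_0, y_n)<\tfrac{1+\tht_0}{2}$ for some $y_n\in\overline{B_R(x_n)}$. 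After translating $(t_1^n,x_n)$ to the origin, uniform Lipschitz bounds on the shifted reactions $\tilde f_n$ and interior parabolic (Schauder) estimates for the shifted solutions $\tilde u_n$, valid on a fixed interval $[-\tfrac 12,\infty)\times\bbR^d$ thanks to the unit of backward time $t_1^n-t_0^n\ge 1$, produce a subsequential locally uniform limit $(\tilde f_\infty,\tilde u_\infty)$, with $\tilde u_\infty\in[0,1]$ solving $u_t=\Delta u+\tilde f_\infty(y,u)$, $\tilde f_\infty$ still satisfying (H), and $\tilde u_\infty(0,0)=1$. Then $v:=1-\tilde u_\infty\ge 0$ satisfies a linear parabolic equation $v_t-\Delta v+c(t,y)v=0$ with $|c|\le K$ (because $\tilde f_\infty(\cdot,1)=0$ and $\tilde f_\infty$ is Lipschitz), so the parabolic strong maximum principle forces $v\equiv 0$ on $[-\tfrac 12,0]\times\bbR^d$; uniqueness for the Cauchy problem then propagates $\tilde u_\infty\equiv 1$ to all of $[0,\infty)\times\bbR^d$. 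This contradicts $\tilde u_\infty(\sigma_0, y_\infty)\le \tfrac{1+\tht_0}2$ at any limit point $y_\infty$ of $y_n-x_n$.

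The key technical obstacle I anticipate is precisely this compactness step: the hypothesis $t_1\ge t_0+1$ is exactly what is needed to retain a unit of backward time in the limit, so that the strong maximum principle can propagate the equality $\tilde u_\infty=1$ from the single point $(0,0)$ to all of $\{0\}\times\bbR^d$, and then forward by uniqueness. Without this backward time, one could not rule out that $\tilde u_\infty$ reaches the value $1$ only at a single space-time point, in which case no ball of large values would form and the reduction would genuinely fail.
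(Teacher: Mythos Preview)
Your proposal is correct and follows essentially the same approach as the paper: the second assertion via comparison with the homogeneous $f_0$ problem and Aronson--Weinberger, and the first via a compactness/strong-maximum-principle reduction to the second. The only difference is that the paper carries out the reduction at time $t_1$ itself (i.e., with your $\sigma_0=0$): it assumes, toward a contradiction, that $u_n(0,0)\ge 1-\tfrac1n$ yet $\inf_{B_R(0)}u_n(0,\cdot)<\tfrac{1+\tht_0}{2}$, passes to a limit solving \eqref{1.1} on $(-1,\infty)\times\bbR^d$ with $u(0,0)=1$ and $\inf_{B_R(0)}u(0,\cdot)<1$, and invokes the strong maximum principle directly. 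This spares your forward-uniqueness step and the extra parameter $\sigma_0$, but the substance is the same.
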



\begin{proof}
The second claim is proved in \cite{AW} when $f(y,\cdot)=f_0(\cdot)$ for all $y\in\bbR^d$ and follows for general $f$ by the comparison principle.  

The first claim holds because \eqref{2.1a} follows from $u(t_1,x)\ge 1-\eps_0$, 
provided $\eps_0>0$ is sufficiently small (depending on $f_0,K$) .  Indeed, assume that for each $n\in\bbN$ there were $f_n$ satisfying (H) and $u_n$ solving \eqref{1.1} on $(-1,\infty)\times\bbR^d$ with $f=f_n$, such that $u_n(0,0)\ge 1-\tfrac 1n$ and $\inf_{y\in B_R(0)} u_n(0,y)<\tfrac 12(1+\tht_0)$ (note that we can shift $(t_1,x)$ to $(0,0)$ without loss, and then $t_0\le -1$).  By parabolic regularity, there is a subsequence $\{{n_j}\}_{j\ge 1}$ with $u_{n_j}$ and $f_{n_j}$  locally uniformly converging to $u\in[0,1]$ and $f$ such that $f$ satisfies (H) and $u$ solves \eqref{1.1} on $(-1,\infty)\times\bbR^d$, with $u(0,0)=1$ and $\inf_{y\in B_R(0)} u(0,y)<1$.  But this contradicts the strong maximum principle, and we are done.
\end{proof}

The first claim of this result immediately shows that solutions with bounded widths propagate with global mean speed in $[c_0,\infty]$.  It turns out that bounded width also makes the global mean speed not exceed $c_1$, at least in the ignition case.  This can be proved by a separate argument and we state both these results in the following lemma.

\begin{lemma} \lb{L.2.1a}
Let $f_0,K$ be as in (H) and $f_1$ be pure ignition. 
For each $\eps\in(0,\tfrac 12)$ and $\delta>0$ there is $\eps'>0$ and $\tau<\infty$  such that the following holds.
If $u\in[0,1]$ solves \eqref{1.1} on $(t_0,\infty)\times\bbR^d$ with ignition $f$ from (H) satisfying \eqref{1.4e}, and $\sup_{t\in[t_0+1,t_3]}L_{u,\eps'}(t)\le L$, then
\[
 B_{(c_0-\delta)(t_2-t_1)-L} \left(\Omega_{u,\eps}(t_1) \right) \subseteq \Omega_{u,1-\eps}(t_2) 
\qquad\text{and}\qquad
\Omega_{u,\eps}(t_2) \subseteq B_{(c_1+\delta)(t_2-t_1)+L} \left(\Omega_{u,1-\eps}(t_1) \right)
\]
whenever $t_1\ge t_0+1$ and $t_2\in[t_1+\tau,t_3]$. 
\end{lemma}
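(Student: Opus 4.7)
The two inclusions are independent and use different tools: the first (lower bound by $c_0$) follows from Lemma~\ref{L.2.1}, while the second (upper bound by $c_1$) follows from comparison with the homogeneous pure-ignition equation $\bar u_t=\Delta\bar u+f_1(\bar u)$ and its spreading properties. Fix $\eps'\in(0,\tfrac12)$ with $\eps'\le\min(\eps,\eps_0,\tht_0/2)$, where $\eps_0=\eps_0(f_0,K)>0$ is from Lemma~\ref{L.2.1}. Choosing $\eps'\le\eps$ gives the inclusions $\Omega_{u,\eps}\subseteq\Omega_{u,\eps'}$ and $\Omega_{u,1-\eps'}\subseteq\Omega_{u,1-\eps}$, so the bounded-width hypothesis $L_{u,\eps'}(t)\le L$ controls the level sets appearing in the conclusion; choosing $\eps'\le\eps_0$ makes Lemma~\ref{L.2.1} applicable at any point where $u\ge 1-\eps'$; and $\eps'<\tht_0$ ensures the baseline in the comparison below is a stationary solution for $f_1$.

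\emph{First inclusion.} Given $x\in\Omega_{u,\eps}(t_1)\subseteq\Omega_{u,\eps'}(t_1)$ with $t_1\ge t_0+1$, the bounded-width hypothesis provides $y\in\Omega_{u,1-\eps'}(t_1)$ with $|x-y|\le L$, so $u(t_1,y)\ge 1-\eps_0$. Lemma~\ref{L.2.1}, applied with speed $c_0-\delta/2<c_0$ and tolerance $\eps$, produces $\tau_1=\tau(f_0,K,c_0-\delta/2,\eps)$ such that for every $t_2\ge t_1+\tau_1$ we have $u(t_2,w)\ge 1-\eps$ whenever $|w-y|\le(c_0-\delta/2)(t_2-t_1)$. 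If $z\in B_{(c_0-\delta)(t_2-t_1)-L}(x)$, then by the triangle inequality $|z-y|\le(c_0-\delta)(t_2-t_1)\le(c_0-\delta/2)(t_2-t_1)$, hence $z\in\Omega_{u,1-\eps}(t_2)$.

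\emph{Second inclusion.} Set $A:=B_L(\Omega_{u,1-\eps'}(t_1))$; the bounded-width hypothesis gives $u(t_1,\cdot)<\eps'$ outside $A$, so $\bar u_0:=\chi_A+\eps'\chi_{A^c}\ge u(t_1,\cdot)$. Let $\bar u$ solve $\bar u_t=\Delta\bar u+f_1(\bar u)$ on $[t_1,\infty)\times\bbR^d$ with $\bar u(t_1,\cdot)=\bar u_0$; by \eqref{1.4e} and the parabolic comparison principle, $u\le\bar u$ on $[t_1,t_3]\times\bbR^d$. I will establish, via a traveling-front supersolution argument (sketched below), that there is $\tau_2=\tau_2(\eps,\delta,f_1)$ such that $\bar u(t_2,z)<\eps$ whenever $d(z,A)>(c_1+\delta)(t_2-t_1)$ and $t_2-t_1\ge\tau_2$. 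Given this, if $z\notin B_{(c_1+\delta)(t_2-t_1)+L}(\Omega_{u,1-\eps}(t_1))$, then using $\Omega_{u,1-\eps'}(t_1)\subseteq\Omega_{u,1-\eps}(t_1)$,
\[
 d(z,A)\ge d(z,\Omega_{u,1-\eps'}(t_1))-L\ge d(z,\Omega_{u,1-\eps}(t_1))-L>(c_1+\delta)(t_2-t_1),
\]
so $u(t_2,z)\le\bar u(t_2,z)<\eps$, proving the desired containment. Setting $\tau:=\max(\tau_1,\tau_2)$ completes the proof.

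\emph{Main obstacle.} The principal difficulty is the spreading estimate for $\bar u$, since $\bar u_0$ has a nonzero baseline $\eps'$ at infinity rather than decaying to $0$, so Aronson--Weinberger spreading for compactly supported data does not apply verbatim. Two features save the situation: (a) $\eps'<\tht_0$ makes the constant $\eps'$ a stationary solution of the $f_1$-equation, effectively reducing to a compactly-perturbed problem for $w:=\bar u-\eps'$; and (b) the unique traveling front $U_1$ of $f_1$ at speed $c_1$, satisfying $U_1''+c_1U_1'+f_1(U_1)=0$ with $U_1'<0$, yields for each direction $e\in\bbS^{d-1}$ a strict supersolution $V_e(t,x):=U_1(x\cdot e-M_e^A-(c_1+\delta/2)(t-t_1)-s_0)+\eps'$, where $M_e^A:=\sup_{y\in A}y\cdot e$. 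The supersolution property (verified by computing $V_{e,t}-\Delta V_e-f_1(V_e)\ge(\delta/2)(-U_1')-|f_1(U_1+\eps')-f_1(U_1)|$) requires $\eps'$ small relative to $\delta$ and the Lipschitz data of $f_1$, exploiting that $-U_1'$ is bounded away from $0$ on the transition region. Choosing $s_0$ large makes $V_e(t_1,\cdot)\ge\bar u_0$ globally, and optimizing over $e$ (selecting $e=(z-z_A)/|z-z_A|$ for the nearest point $z_A\in A$ to $z$, which for convex $A$ gives $z\cdot e-M_e^A=d(z,A)$, with the nonconvex case handled by a localization to the $d(z,A)$-ball around $z$ on which $\bar u_0=\eps'$) absorbs the additive shift $s_0+U_1^{-1}(\eps-\eps')$ into the speed loss $\delta/2$ once $\tau_2$ is taken sufficiently large.
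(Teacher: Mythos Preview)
Your treatment of the first inclusion is correct and essentially identical to the paper's argument.

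For the second inclusion your approach differs from the paper's and contains a genuine gap. The supersolution candidate $V_e=U_1+\eps'$ need not be a supersolution for small $\delta$. The verification requires
\[
\frac{\delta}{2}\,(-U_1'(s))\ \ge\ f_1\bigl(U_1(s)+\eps'\bigr)-f_1\bigl(U_1(s)\bigr)\qquad\text{for all }s,
\]
and your claim that ``$-U_1'$ is bounded away from $0$ on the transition region'' does not cover the range $U_1(s)\in(1-C\eps',\,1-\eps')$. There $-U_1'$ is \emph{not} bounded below independently of $\eps'$: from the front ODE one has $-U_1'\sim\lambda(1-U_1)$ (or smaller) as $U_1\to1$, so $-U_1'\sim\lambda\eps'$ on that range, while the right side can be of order $K\eps'$ since no monotonicity of $f_1$ near $1$ is assumed. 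The inequality then reduces to $\delta\gtrsim 2K/\lambda$, which fails for small $\delta$ regardless of how small you take $\eps'$. Thus ``$\eps'$ small relative to $\delta$'' does not suffice.

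A second gap is the handling of non-convex $A$. Your planar supersolution gives only $z\cdot e-M_e^A\le d(z,A)$, with equality generally failing, and the ``localization to the $d(z,A)$-ball'' you allude to is exactly the nontrivial step: you would need a \emph{radial} supersolution centered at $z$ that equals $\eps'$ at the center and exceeds $1$ outside $B_{d(z,A)}(z)$ at time $t_1$, and verifying the supersolution property then requires controlling curvature terms.

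The paper avoids both issues simultaneously. Instead of perturbing $U_1$, it passes to an auxiliary reaction $f_2\ge f_1$ with $f_2\equiv0$ on $[0,2\eps']\cup\{1+\eps'\}$ whose (exact) front $U$ connects $\eps'$ to $1+\eps'$ at a speed $c_2\in[c_1,c_1+\tfrac{\delta}{3}]$; since $U''+c_2U'+f_2(U)=0$ exactly, there is no reaction-error term at all. It then builds a \emph{radial} supersolution $v(t,x)=U\bigl(z_2-h(|x-y|)-(c_2+\tfrac{\delta}{3})t\bigr)$ centered at the target point $y$, with a carefully chosen profile $h$ to absorb the curvature term $(d-1)|x-y|^{-1}$. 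This handles arbitrary geometry of $\Omega_{u,1-\eps}(t_1)$ and yields $u(t_2,y)\le v(0,0)=U(z_2)<2\eps'<\eps$ directly, without the intermediate $\bar u$.
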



\begin{proof}
The first inclusion is immediate for any $\eps'\in(0,\min\{\eps,\eps_0\}]$, with $\tau$ from Lemma \ref{L.2.1} with $\eps$ and $c:=c_0-\delta$.  Indeed, if $x\in\Omega_{u,\eps}(t_1)$, then $\bar B_L(x)\cap \Omega_{u,1-\eps_0}(t_1)\neq \emptyset$, so Lemma \ref{L.2.1} yields the result (even for monostable $f$).

Let us now consider the second inclusion.  Extend $f_1$ by 0 to $\bbR\setminus[0,1]$.  It is well known that for any $\delta>0$ there is $\eps'\in(0,\tfrac \eps 2)$ and a traveling front for some  $f_2\ge f_1(\ge 0)$ with $f_2\equiv 0$ on $[0,2\eps']\cup\{1+\eps'\}$, which has speed $c_2\in[c_1, c_1+\tfrac \delta 3]$ and connects $\eps'$ and $1+\eps'$.  That is, there is a solution of $U''+c_2U'+f_2(U)=0$ on $\bbR$ with $U'<0$, $U(-\infty)=1+\eps'$ and $U(\infty)=\eps'$ (and we can also assume $U(0)=2\eps'$ after translation).  Indeed, one only needs to take $\eps'$ small enough and $f_2$ close enough to $f_1$.

Let $z_1:=\tfrac{6d}\delta$, $z_2:=\tfrac{6d+7}\delta$ and let $h:[0,\infty)\to[0,\infty)$ be any $C^2$ function with $h\equiv 0$ on $[0,z_1]$, $h'\equiv 1$ on $[z_2,\infty)$, and $h'\le 1$ and $h''\in[0,\tfrac \delta 6]$ on $[z_1,z_2]$.  We now claim that
\beq\lb{1.3hh}
v(t,x):=U \left( z_2-h(|x|)- \left(c_2+\frac\delta 3 \right)t \right)
\eeq
satisfies 
\beq \lb{1.3jj}
v_t\ge \Delta v+f_2(v) \qquad\text{on $(-\infty,0)\times\bbR^d$.}  
\eeq
Indeed, for $|x|\le z_1$ the argument of $U$ is positive (so $f_2(U)=0$) and we have
\[
v_t-\Delta v - f_2(v) = -\left(c_2+\frac\delta 3 \right)U' \ge 0.
\]
For $|x|\ge z_1$ we get
\[
-v_t+\Delta v + f_2(v) = \left[ \left(c_2+\frac\delta 3 \right) -h''(|x|)-\frac{d-1}{|x|}h'(|x|) \right] U'+ \left( h'(|x|) \right)^2 U'' +f_2(U) =(*).
\]
If $|x|\ge z_2$, then 
\[
(*)= \left[ \left(c_2+\frac\delta 3 \right) -\frac{d-1}{|x|} \right] U'+  U'' +f_2(U) = \left[ \frac\delta 3  -\frac{d-1}{|x|} \right] U' \le 0.
\]
If $|x|\in[z_1,z_2]$, then again the argument of $U$ is positive (so $f_2(U)=0$) and we have
\begin{align*}
(*) & = \left[ \left(c_2+\frac\delta 3 \right) -h''(|x|)-\frac{d-1}{|x|}h'(|x|) - c_2 \left( h'(|x|) \right)^2 \right] U'
\\ & = \left[ c_2 \left( 1-\left( h'(|x|) \right)^2 \right) + \left( \frac\delta 6-h''(|x|) \right) + \left( \frac\delta 6 -\frac{d-1}{|x|}h'(|x|) \right) \right] U'.
\end{align*}
Each of the three terms in the last square bracket is non-negative, so  again $(*)\le 0$ and \eqref{1.3jj} holds.

We now let $\tau:=\tfrac 3\delta(2z_2-U^{-1}(1))$ and consider arbitrary $y\notin B_{(c_1+\delta)(t_2-t_1)+L} \left(\Omega_{u,1-\eps}(t_1) \right)$.  By the hypothesis and $\eps>\eps'$, we have $u(t,x)<\eps'$ for all $x\in B_{(c_1+\delta)(t_2-t_1)}(y)$.  The function 
\[
w(t,x):=v(t-t_2,x-y)\quad(\ge \eps')
\]
is obviously  a super-solution of \eqref{1.1} on $(t_1,t_2)\times\bbR^d$, and for $x\notin B_{(c_1+\delta)(t_2-t_1)}(y)$ we have
\[
w(t_1,x) \ge U \left( 2z_2-|x-y|- \left(c_1+\frac{2\delta} 3 \right)(t_1-t_2) \right) \ge U \left( 2z_2-\frac{\delta} 3 (t_2-t_1) \right) \ge U \left( 2z_2-\frac{\delta} 3\tau \right)=1
\]
by $U'<0$, $h(z)\ge z-z_2$, $c_2\le c_1+\tfrac\delta 3$, and $t_2-t_1\ge \tau$.  Hence $w(t_1,\cdot)\ge u(t_1,\cdot)$ and so
\[
u(t_2,y)\le w(t_2,y)=v(0,0)=U(z_2)<2\eps'<\eps.
\]  
Thus $y\notin\Omega_{u,\eps}(t_2)$ and we are done.
\end{proof}

During the proofs of our main results, we will sometimes need to pass to limits along subsequences of $\{(f_n,u_n)\}$, where $f_n$ satisfy (H) and $u_n\in[0,1]$ have uniform-in-$n$ bounds on their widths.  The following will be useful.

For $\eps\in(0,\tfrac 12)$, $\ell>0$,  and $t_0\in[-\infty,\infty)$, let $S_{t_0,\eps,\ell}=S_{t_0,\eps,\ell}(f_0,K,\tht)$ be the set of all pairs $(f,u)$ such that  $f$ satisfies (H) with the given $f_0,K,\tht$
and $u\in[0,1]$ solves \eqref{1.1} on $(t_0,\infty)\times\bbR^d$ and satisfies $L_{u,\eps'}(t)\le \ell$ for all $\eps'\in(\eps,\tfrac 12)$ and all $t> t_0$.  For non-increasing and left-continuous $L:(0,\tfrac 12)\to(0,\infty)$, let 
\[
S_{t_0,L}=S_{t_0,L}(f_0,K,\tht):= \bigcap_{\eps\in(0,1/2)} S_{t_0,\eps,L(\eps)} (f_0,K,\tht).
\]
(so $(f,u)\in S_{t_0,L}$ implies $L_{u,\eps}(t)\le L(\eps)$ for $\eps\in(0,\tfrac 12)$ and $t>t_0$, by left-continuity of $L$) and
\[
S_L=S_L(f_0,K,\tht):= \{ (f,u)\,|\, (f,u)\in S_{-\infty,L}(f_0,K,\tht) \text{ and } u\not\equiv 0,1 \}.
\]
Thus any entire solution $u\in[0,1]$ of \eqref{1.1}  with bounded width, except $u\equiv 0,1$, appears in some $S_L$. 
Of course, strong maximum principle gives $u\in(0,1)$ if $(f,u)\in S_L$.

\begin{lemma} \lb{L.8.1}
Fix $f_0,K,\tht$ and $L$ as above and let $t_0\in[-\infty,\infty)$.

(i) If for $\eps\in(0,\tfrac 12)$ and $\ell>0$ we have $(f_n,u_n)\in S_{t_n,\eps,\ell}(f_0,K,\tht)$ and $\limsup_{n\to\infty}t_n\le t_0$, then there is $n_j\to\infty$ (as $j\to\infty$) and  $(f,u)\in S_{t_0,\eps,\ell}(f_0,K,\tht)$ such that $f_{n_j}\to f$  locally uniformly on $\bbR^{d}\times[0,1]$ and $u_{n_j}\to u$ locally uniformly on $(t_0,\infty)\times\bbR^d$.  

(ii) If for each $\eps\in(0,\tfrac 12)$ we have $(f_n,u_n)\in S_{t_n(\eps),\eps,L(\eps)}(f_0,K,\tht)$ and $\limsup_{n\to\infty}t_n(\eps)\le t_0$, then there is $n_j\to\infty$ (as $j\to\infty$) and  $(f,u)\in S_{t_0,L}(f_0,K,\tht)$ such that $f_{n_j}\to f$  locally uniformly on $\bbR^{d}\times[0,1]$ and $u_{n_j}\to u$ locally uniformly on $(t_0,\infty)\times\bbR^d$.

(iii)  If $\eps\in(0,2\eps_0]$ and $\ell>0$, then 
\beq \lb{1.7b}
\inf \left\{ u_t(t,x) \,\Big|\, (f,u)\in S_{0,\eps/2,\ell}(f_0,K,0),\, u_t\ge 0 \text{ on $(0,\infty)\times\bbR^d$, } 
t\ge 1, 
u(t,x)\in[\eps,1-\eps] \right\} >0
\eeq
\end{lemma}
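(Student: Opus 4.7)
The plan is to prove (i) and (ii) by standard parabolic compactness (Arzelà--Ascoli together with interior Schauder estimates), and then to deduce (iii) by contradiction: a sequence of nearly-vanishing time-derivatives produces, in the limit, a stationary solution $u_\infty$ with $u_\infty(0)\in[\eps,1-\eps]$ and bounded width, which Lemma \ref{L.2.1} will rule out.

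For (i), the uniform Lipschitz bounds on $\{f_n\}$ yield a subsequence $f_{n_j}\to f$ locally uniformly on $\bbR^d\times[0,1]$, with $f$ still satisfying (H) since each clause of (H) is closed under locally uniform convergence. Interior parabolic Schauder estimates applied to $u_n\in[0,1]$ solving uniformly parabolic equations with uniformly bounded right-hand sides give a further subsequence $u_{n_j}\to u$ locally uniformly on $(t_0,\infty)\times\bbR^d$, with $(u_{n_j})_t,\nabla u_{n_j},\nabla^2 u_{n_j}$ converging as well, so the limit solves \eqref{1.1}. To transfer the width bound: given $x\in\Omega_{u,\eps'}(t)$ and $\delta\in(0,\eps'-\eps)$, for large $j$ we have $u_{n_j}(t,x)\ge\eps'-\delta>\eps$, hence some $y_{n_j}\in\bar B_\ell(x)$ satisfies $u_{n_j}(t,y_{n_j})\ge 1-\eps'+\delta$; passing to a convergent sub-subsequence $y_{n_j}\to y$ yields $u(t,y)\ge 1-\eps'$. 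The slack $\eps'>\eps$ in the definition of $S_{t_0,\eps,\ell}$ is essential for this step. Part (ii) follows from (i) by a diagonal extraction over $\eps_k\to 0^+$, with the left-continuity of $L$ packaging the widths into the single condition $(f,u)\in S_{t_0,L}$.

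For (iii), suppose the infimum is $0$ and select sequences $(f_n,u_n)\in S_{0,\eps/2,\ell}(f_0,K,0)$, $t_n\ge 1$, $x_n\in\bbR^d$ with $u_n(t_n,x_n)\in[\eps,1-\eps]$ and $(u_n)_t(t_n,x_n)\to 0$. Introduce the shifts $\tilde u_n(t,x):=u_n(t+t_n,x+x_n)$ and $\tilde f_n(x,u):=f_n(x+x_n,u)$; by spatial translation invariance of $S$ together with the time shift, $(\tilde f_n,\tilde u_n)\in S_{-t_n,\eps/2,\ell}$ with $-t_n\le -1$. Passing to a subsequence with $-t_n\to t_0\in[-\infty,-1]$ and applying (i), extract a further subsequence converging to $(f_\infty,u_\infty)\in S_{t_0,\eps/2,\ell}$; the Schauder estimates also give locally uniform convergence of $(\tilde u_n)_t$ to $(u_\infty)_t$, so $v:=(u_\infty)_t\ge 0$ with $v(0,0)=0$ and $u_\infty(0,0)\in[\eps,1-\eps]$. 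The strong parabolic minimum principle, applied via the finite differences $w_h:=u_\infty(\cdot+h,\cdot)-u_\infty\ge 0$ (which solve a linear parabolic equation with a coefficient bounded by $K$ from the Lipschitz property of $f_\infty$ in $u$; a Harnack bound then contradicts $w_h(0,0)=o(h)$ unless $v\equiv 0$ backward from $(0,0)$), forces $v\equiv 0$ on $(t_0,0]\times\bbR^d$. Forward uniqueness extends this to all of $(t_0,\infty)\times\bbR^d$, so $u_\infty$ is a stationary solution of \eqref{1.1}.

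Since $u_\infty(0)\ge\eps\ge\eps/2$, the width bound supplies $y\in\bar B_\ell(0)$ with $u_\infty(y)\ge 1-\eps/2\ge 1-\eps_0$, where the last inequality uses $\eps\le 2\eps_0$. Viewing $u_\infty$ as time-independent and applying Lemma \ref{L.2.1} with any $c<c_0$ and any $\eps''>0$, we obtain $u_\infty(z)\ge 1-\eps''$ for every $z\in\bbR^d$ upon sending $t\to\infty$; taking $\eps''\to 0$ yields $u_\infty\equiv 1$, contradicting $u_\infty(0)\le 1-\eps<1$. The main obstacle is the contradiction step in (iii): one must arrange the shifts so that the limit lies in the same class $S$ (hence inherits bounded width) and is time-monotone with a vanishing time-derivative at an interior point, so that the strong minimum principle promotes it to a stationary profile that Lemma \ref{L.2.1} can exclude thanks to the threshold $\eps\le 2\eps_0$. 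A subsidiary technical point is justifying the minimum principle when $f$ is only Lipschitz, which is handled cleanly by working with the finite differences $w_h$ rather than with $v$ directly.
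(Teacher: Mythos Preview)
Your proof is correct and follows essentially the same approach as the paper's: compactness via parabolic regularity for (i) and (ii), and for (iii) a contradiction argument that shifts a minimizing sequence to produce a limit solution with $u_t\ge 0$ vanishing at an interior point, which the strong maximum principle promotes to a time-independent solution that Lemma~\ref{L.2.1} then rules out. The only noteworthy difference is that you are more careful about the Lipschitz regularity of $f$, handling the strong minimum principle via the finite differences $w_h$ rather than writing the equation for $u_t$ with the (possibly ill-defined pointwise) coefficient $f_u$; the paper simply asserts the linear equation $v_t=\Delta v+f_u(x,u)v$ for $v=u_t$ and applies the strong maximum principle directly.
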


\begin{proof}
(i) The properties of $f_n$, uniform boundedness of $u_n$, and standard parabolic regularity for $u_n$ prove existence of locally uniform limits $f,u$ along a subsequence $\{n_j\}_{j\ge 1}$, as well as that $f$ satisfies (H) (with the same $f_0,K,\tht$) and $u$ solves \eqref{1.1}.  Locally uniform convergence $u_{n_j}\to u$ then yields $L_{u,\eps'}(t)\le \ell$ for all $\eps'\in(\eps,\tfrac 12)$ and all $t> t_0$ (just pick any $\eps''\in(\eps,\eps')$ and then a large enough $j$).  Thus  $(f,u)\in S_{t_0,\eps,\ell}$.

(ii)  The proof is identical to (i).

(iii)  
Assume that the $\inf$ in \eqref{1.7b} is 0.   Then there are $(f_n,u_n)\in S_{0,\eps/2,\ell}$ with $(u_n)_t\ge 0$ and $(t_n,x_n)\in[1,\infty)\times\bbR^d$ such that $u_n(t_n,x_n)\in[\eps,1-\eps]$ and $(u_n)_t(t_n,x_n)\in[0,\tfrac 1n]$.  After shifting $(t_n,x_n)$ to $(1,0)$ and applying (i), we obtain $(f,u)\in S_{0,\eps/2,\ell}$ with  $u(1,0)\in[\eps,1-\eps]$ and $u_t\ge 0 =u_t(1,0)$.  The strong maximum principle for the linear PDE $v_t=\Delta v+f_u(x,u(t,x))v$, satisfied by $u_t$, then implies $u_t\equiv 0$.  This however contradicts Lemma \ref{L.2.1}, which yields $\lim_{t\to\infty} u(t,0)= 1\,(>u(1,0))$ because $\sup_{x\in B_\ell(0)} u(1,x)\ge 1-\tfrac\eps 2 \,(\ge 1-\eps_0)$.
\end{proof}

An important role in the proof of Theorems \ref{T.1.2} and \ref{T.1.3} will be played by equilibrium solutions of \eqref{1.1}.  

\begin{lemma} \lb{L.2.2}
Let $f\ge 0$ be Lipschitz and $v\in[0,1]$ satisfy
\beq \lb{2.2}
\Delta v + f(x,v)=0
\eeq
on $\bbR^d$.  If $d\le 2$, then $v$ is constant and $f(x,v(x))\equiv 0$.  If $d\ge 3$, then 
\beq \lb{2.3}
\int_{\bbR^d} |x|^{2-d} f(x,v(x)) dx\le (d-2)|\partial B_1(0)|.
\eeq
\end{lemma}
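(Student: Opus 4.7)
The plan is to analyze the spherical average $\phi(R) := |\partial B_R|^{-1} \int_{\partial B_R(0)} v\, d\sigma$ (with $\phi(0):=v(0)$) of $v$, which is essentially the representation of $v$ via the Newtonian potential of $f(\cdot,v(\cdot))$. Since $f$ is Lipschitz and $v \in [0,1]$ solves \eqref{2.2}, elliptic regularity gives $v \in C^{2}$, so differentiating under the integral and applying the divergence theorem to $\Delta v = -f(x,v) \le 0$ yields
\[
\phi'(R) \;=\; \frac{1}{|\partial B_R|}\int_{B_R} \Delta v\, dx \;=\; -\frac{1}{|\partial B_R|}\int_{B_R} f(x,v(x))\, dx \;\le\; 0.
\]
Thus $\phi$ is nonincreasing, and since $v\in[0,1]$ we also have $\phi(R)\in[0,1]$ for all $R\ge 0$.

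Next, integrating $-\phi'$ from $0$ to $R$ and swapping the order of integration via Fubini produces the key identity
\[
v(0) - \phi(R) \;=\; \int_{B_R} f(x,v(x))\, G_R(|x|)\, dx, \qquad G_R(r) := \int_r^R \frac{ds}{|\partial B_s|}.
\]
Combining this with $\phi(R)\ge 0$ gives the uniform bound
\[
\int_{B_R} f(x,v(x))\, G_R(|x|)\, dx \;\le\; v(0) \;\le\; 1 \qquad \text{for every } R>0.
\]
For $d\ge 3$, an explicit computation shows $G_R(r) = [(d-2)|\partial B_1|]^{-1}(r^{2-d}-R^{2-d})$, which increases to $[(d-2)|\partial B_1|]^{-1} r^{2-d}$ as $R\to\infty$; monotone convergence then yields
\[
\int_{\bbR^d} |x|^{2-d} f(x,v(x))\, dx \;\le\; (d-2)|\partial B_1(0)|,
\]
which is \eqref{2.3}.

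For $d\in\{1,2\}$, the weight $G_R(r)$ tends to $\infty$ as $R\to\infty$ for every fixed $r>0$ (logarithmically when $d=2$, linearly when $d=1$). Hence if $f(x_0,v(x_0))>0$ at some $x_0$, continuity of $f(\cdot,v(\cdot))$ would furnish a ball $B_\rho(x_0)$ on which $f(\cdot,v(\cdot))\ge c>0$, forcing $c|B_\rho(x_0)|\inf_{x\in B_\rho(x_0)} G_R(|x|) \le 1$, which fails for $R$ large. So $f(x,v(x))\equiv 0$, which means $v$ is a bounded harmonic function on $\bbR^d$, and the classical Liouville theorem forces $v$ to be constant. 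The whole argument reduces to Newtonian-potential bookkeeping; the only subtle points are the Fubini identification of the weight $G_R$ and the passage $R\to\infty$, which cleanly splits the two regimes via the familiar dichotomy that the Green's function at infinity is bounded precisely when $d\ge 3$.
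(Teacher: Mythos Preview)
Your proof is correct and follows essentially the same route as the paper: both arguments differentiate the spherical mean of $v$ via the divergence theorem, then integrate the resulting identity in the radius to produce the weighted integral of $f(\cdot,v(\cdot))$ against the truncated Newtonian kernel, and finally pass to $R\to\infty$ using $v\in[0,1]$. The only cosmetic differences are that the paper writes the computation in terms of $\tilde v(\rho,y)=v(\rho y)$ and the primitive $l(r)$, whereas you package it via $\phi(R)$ and $G_R(r)$, and you spell out the Liouville step in the $d\le 2$ case that the paper leaves implicit.
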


\begin{proof}
Integrating \eqref{2.2} over $B_r:=B_r(0)$ and using the divergence theorem yields
\[
\int_{B_r} f(x,v(x)) dx = -\int_{\partial B_r} \nabla v(x) \cdot n(x) \,d\sigma_r(x) = -r^{d-1} \int_{\partial B_1} \til v_\rho(r,y) \,d\sigma_1(y)
\]
where $n$ is the unit outer normal and $\sigma_r$ the surface measure for $\partial B_r$, and $\til v(\rho,y)=v(\rho y)$ for $(\rho,y)\in(0,\infty)\times \partial B_1$.  Multiplying by $r^{1-d}$ and integrating in $r\in[0,r_0]$  gives
\[
\int_{B_{r_0}} [l(r_0)-l(|x|)] f(x,v(x)) dx = \int_{0}^{r_0} r^{1-d} \int_{B_r} f(x,v(x)) dx dr = \int_{\partial B_1} [\til v(0,y) - \til v(r_0,y)] \,d\sigma_1(y),
\]
where $l(r)=\ln r$ if $d=2$ and $l(r)=r^{2-d}/(2-d)$ otherwise.
Taking $r_0\to \infty$ finally yields
\[
\int_{\bbR^d} [l(\infty)-l(|x|)] f(x,v(x)) dx = |\partial B_1(0)| v(0) - \lim_{r\to \infty} r^{1-d}\int_{\partial B_r} v(x) d\sigma_r(x).
\]
Since $v\in[0,1]$, either $f(x,v(x))\equiv 0$ (and then $v$ is constant) or $d\ge 3$ and \eqref{2.3} holds.
\end{proof}

\begin{lemma} \lb{L.2.3}
For $\zeta>0$, let $\Psi(x)=\psi(|x|)$ be the radially symmetric solution of 
\beq \lb{2.4}
\Delta \Psi = \zeta \Psi 
\eeq
on $\bbR^d$ with $\Psi(0)=1$. Then $\psi,\psi'>0$ on $(0,\infty)$ and
\beq \lb{2.5}
\lim_{r\to\infty}  \left( \sqrt\zeta r \right)^{(d-1)/2} e^{-\sqrt\zeta r} \psi^{(k)}(r)= \zeta^{k/2} l_{d}
\eeq
for some $l_{d}\in(0,\infty)$ and $k=0,1$.  In particular,
\beq \lb{2.6}
\lim_{r\to\infty}  \psi'(r)\psi(r)^{-1}=\sqrt\zeta
\eeq
\end{lemma}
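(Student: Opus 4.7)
The plan is to reduce the PDE to a radial ODE, prove positivity directly from the divergence form, and then identify $\psi$ with a scaled modified Bessel function whose asymptotics are classical. In radial coordinates $\Delta\Psi=\zeta\Psi$ becomes
\[
\psi''(r)+\frac{d-1}{r}\psi'(r)=\zeta\psi(r),\qquad \psi(0)=1,\;\psi'(0)=0,
\]
equivalently $(r^{d-1}\psi')'=\zeta r^{d-1}\psi$, which upon integration gives $r^{d-1}\psi'(r)=\zeta\int_0^r s^{d-1}\psi(s)\,ds$.

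For positivity, let $r^*\in(0,\infty]$ be maximal so that $\psi>0$ on $[0,r^*)$. On $[0,r^*)$ the right side of the integral identity is strictly positive for $r>0$, so $\psi'>0$, so $\psi$ is strictly increasing, so $\psi\ge 1$ on $[0,r^*)$. Hence $r^*=\infty$, giving $\psi\ge 1>0$ on $[0,\infty)$ and $\psi'>0$ on $(0,\infty)$.

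For the asymptotics, first scale out $\zeta$: set $\phi(s):=\psi(s/\sqrt\zeta)$, which satisfies $\phi''+\frac{d-1}{s}\phi'=\phi$ with $\phi(0)=1,\ \phi'(0)=0$. By inspection,
\[
\phi(s)=\Gamma(d/2)\,(2/s)^{(d-2)/2}\,I_{(d-2)/2}(s),
\]
where $I_\nu$ is the modified Bessel function of the first kind; the normalization $\phi(0)=1$ matches the expansion $I_\nu(s)\sim (s/2)^\nu/\Gamma(\nu+1)$ as $s\to 0^+$. The classical asymptotic $I_\nu(s)\sim e^s/\sqrt{2\pi s}$ as $s\to\infty$ then yields $\phi(s)\sim l_d\, s^{-(d-1)/2}e^s$, with
\[
l_d:=\frac{\Gamma(d/2)\,2^{(d-2)/2}}{\sqrt{2\pi}}\in(0,\infty).
\]
Using the identity $\tfrac{d}{ds}[s^{-\nu}I_\nu(s)]=s^{-\nu}I_{\nu+1}(s)$, one gets $\phi'(s)=\Gamma(d/2)\,2^{(d-2)/2}\,s^{-(d-2)/2}I_{d/2}(s)$, which satisfies the same leading asymptotic $\phi'(s)\sim l_d\, s^{-(d-1)/2}e^s$. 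Substituting back via $\psi(r)=\phi(\sqrt\zeta r)$ and $\psi'(r)=\sqrt\zeta\,\phi'(\sqrt\zeta r)$ produces \eqref{2.5} for $k=0,1$ with the common constant $l_d$, and \eqref{2.6} follows by taking the ratio.

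There is no substantial obstacle, since the result is classical once one recognizes the Bessel equation; the only point worth noting is that the positivity of $l_d$ is automatic from the explicit formula. If one prefers to avoid invoking Bessel theory, the substitution $g(r):=r^{(d-1)/2}\psi(r)$ reduces the ODE to $g''=\bigl(\zeta+\tfrac{(d-1)(3-d)}{4r^2}\bigr)g$, and a Liouville–Green / Levinson asymptotic analysis produces $g(r)=A\,e^{\sqrt\zeta r}(1+o(1))$ for some $A>0$ (forced by the lower bound $g(r)\ge r^{(d-1)/2}$ from Step 1), which recovers \eqref{2.5} and \eqref{2.6} without pinning down the explicit value of $l_d$.
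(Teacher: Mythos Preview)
Your proof is correct and follows essentially the same approach as the paper: both reduce to the radial ODE, identify $\psi$ (after rescaling) with a multiple of $s^{-(d-2)/2}I_{(d-2)/2}(s)$, and read off the asymptotics from the classical expansion $I_\nu(s)\sim e^s/\sqrt{2\pi s}$, arriving at the identical constant $l_d=2^{(d-2)/2}\Gamma(d/2)/\sqrt{2\pi}$. Your version is slightly more explicit in two places---the positivity argument via the integrated divergence form, and the derivative asymptotics via the recurrence $\tfrac{d}{ds}[s^{-\nu}I_\nu]=s^{-\nu}I_{\nu+1}$---whereas the paper treats $d=1$ separately and cites the derivative asymptotics of $I_\nu$ directly, but these are presentational rather than substantive differences.
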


{\it Remark.} We only need  $k=0,1$  here but \eqref{2.5} holds for any $k\ge 0$.

\begin{proof}
Here $\psi$ is the unique solution of $\psi''+\tfrac{d-1}r\psi'=\zeta\psi$ on $(0,\infty)$, with $\psi(0)=1$ and $\psi'(0)=0$, which is obviously positive along with $\psi'$.   If $d=1$, one easily checks that 
\beq \lb{2.7}
\psi(r)=\frac {e^{\sqrt\zeta r} + e^{-\sqrt\zeta r}} 2,
\eeq
so \eqref{2.5} holds with $l_1=\tfrac 12$.  
If $d\ge 2$, then $\phi(r):=r^{(d-2)/2} \psi(\zeta^{-1/2}r)$ satisfies 
\[
\phi''+\frac 1r\phi'- \left[1+ \frac{(d-2)^2}{4r^2} \right] \phi=0
\]
 on $(0,\infty)$, with $\lim_{r\to 0} r^{(2-d)/2} \phi(r)=1$ and $\lim_{r\to 0} \tfrac d{dr}[r^{(2-d)/2} \phi(r)]=0$.  Thus by \cite[p.375]{AS}, $\phi=c_dI_{(d-2)/2}$ for $I_\nu$ ($\nu\in\bbC$) the modified Bessel function of the first kind and some $c_d>0$  (in fact, $c_d=2^{(d-2)/2}\Gamma(\tfrac d2)$).  But now \eqref{2.5} follows from $\lim_{r\to\infty} \sqrt{r} e^{-r}I_\nu^{(k)}(r)=(2\pi)^{-1/2}$ for $k=0,1$ \cite[pp.~377 and 378]{AS}, with $l_d:=(2\pi)^{-1/2}c_d$.
\end{proof}

\section{Bounded Widths for Solutions $u\in[0,1]$ with $u_t\ge 0$ (case $u^+\equiv 1$)} \lb{S3}

Again we consider $f_0,K,\tht$ as in (H), $u^+\equiv 1$, $u\in[0,1]$, and also $\eta>0$ and  $\zeta\in(0,c_0^2/4)$. All constants in this section will depend on $f_0,K,\zeta,\eta$ (but not on $\tht$, unless explicitly noted!).

We define $\zeta':=\tfrac {c_0^2}8+\tfrac\zeta 2 \in(\zeta, c_0^2/4)$ and choose any 
\beq\lb{3.00}
h\in \left[0,\min\left\{ \tht\frac{c_0^2-4\zeta}{c_0^2+4\zeta},\frac\eta{4K} \right\} \right]
\eeq
(obviously $h=0$ when $\tht=0$).
This yields $\zeta'(\tht-h)\ge\zeta\tht$, 
which guarantees  $\zeta'(u-h)\ge \zeta u$ for all $u\ge\tht$.  Hence, any $f\in F(f_0,K,\tht,\zeta,\eta)$ satisfies
 \beq \lb{3.0b}
 f(x,u)\le \zeta'(u-h) \qquad\text{for $x\in\bbR^d$ and $u\in[h, \alpha_f(x)]$}.
 \eeq 
Here, and always,  $\alpha_f(x)=\alpha_f(x;\zeta)$ (not $\alpha_f(x;\zeta')$).
Let us also take  $\eps_0$ from Lemma \ref{L.2.1} and  $\psi$ from Lemma \ref{L.2.3} corresponding to $\zeta'$.  Below, $\psi^{-1}(\cdot)$ is the inverse function to $\psi(\cdot)$ on $[0,\infty)$ while $\psi(\cdot)^{-1}=1/\psi(\cdot)$.  

In the following we will assume that  $f\in F(f_0,K,\tht,\zeta,\eta)\, (\subseteq F(f_0,K,0,\zeta,\eta))$ and $u\in[0,1]$  solves \eqref{1.1}, \eqref{1.2}.  For any $(t,y)\in[t_0,\infty)\times\bbR^d$ we define
\begin{align}
Z_y(t):= & \inf_{u(t,x)\ge 1-\eps_0} |x-y| \qquad(\in [0,\infty]), \lb{3.1} 
\\ Y^{h}_y(t):= & \sup \left\{ \rho \,\big|\, u(t,\cdot)\le h+\psi(\rho)^{-1} \psi(|\cdot-y|) \right\} \qquad(\in [0,\infty]), \lb{3.2}
\end{align}
and $\gamma^{h}_y(t):=\psi(Y^{h}_y(t))^{-1}$.
That is, $Z_y(t)$ is the distance from $y$ to the nearest point with value of $u$ sufficiently close to 1, while $Y^{h}_y(t)$ is the distance from $y$ to the points where the best 
upper bound of the form $h+\gamma\psi(|\cdot-y|)$ on $u$ takes the value $1+h$ (both at time $t$), and $\gamma^{h}_y(t)$ is the $\gamma$ from that bound. The latter is clearly non-increasing in $h$, hence  $Y^{h}_y(t)$ is non-decreasing in $h$.  Note that \eqref{2.6} immediately shows
 \beq \lb{3.2b}
 Y^{h}_y(t)\le Z_y(t) + M
 \eeq 
 for some ($\tht,h$-independent) $M\ge 0$.
 
 Let us also fix any $c_Y,c_Z$ such that  
\beq \lb{3.0}
2\sqrt{\zeta'} < c_Y < c_Z < c_0,
\eeq
for instance, $c_Y:=\tfrac 14 c_0 + \tfrac 32\sqrt{\zeta'}$ and $c_Z:=\tfrac 34 c_0 + \tfrac 12 \sqrt{\zeta'}$.
Let $\tau_Z\ge 0$ correspond to $c=c_Z$ and $\eps=\eps_0$ in Lemma \ref{L.2.1} and  let $r_Y\ge 0$ be such that 
\[
\frac{\psi'(r)}{\psi(r)}\ge \frac {4\zeta'} {c_Y +2\sqrt{\zeta'}} \qquad \left( > \frac {2\zeta'} {c_Y} \right)
\]
 for $r\ge r_Y$ (which exists by \eqref{2.6}, with $\zeta'$ in place of $\zeta$, and $c_Y>2\sqrt{\zeta'}$).  Finally,  let 
\beq \lb{3.0a}
c_Y':= \frac{(K +\zeta')c_Y}{2\zeta' } \qquad \left(> \frac{(K +\zeta')}{\sqrt{\zeta'}}\ge 2\sqrt{K } \ge c_1 \right).
\eeq

The choice of $Y^{h}_y$ is motivated by the following result.

\begin{lemma} \lb{L.3.1}
Let $(t_1,y)\in [t_0,\infty)\times\bbR^d$.  

(i) If $t\ge t_1$ is such that $Y^{h}_y(t_1)-c_Y'(t-t_1)\ge r_Y$, then
\beq \lb{3.3}
Y^{h}_y(t)\ge Y^{h}_y(t_1) - c_Y'(t-t_1). 
\eeq

(ii) If $t_2\ge t_1$ is such that $Y^{h}_y(t_1)-c_Y(t_2-t_1)\ge r_Y$ and 
$u(t,x)\le \alpha_f(x)$ on the set $A:=\{(t,x) \,|\, t\in [t_1,t_2] \text{ and }|x-y|\le Y^{h}_y(t_1)-c_Y(t-t_1)\}$, then
\beq \lb{3.4}
Y^{h}_y(t)\ge Y^{h}_y(t_1) - c_Y(t-t_1) 
\eeq
for any $t\in [t_1,t_2]$.  

(iii)  If $t_1\ge t_0+1$ and $t\ge t_1+\tau_Z$, 
then
\beq \lb{3.5}
Z_y(t)\le \left[ Z_y(t_1) - c_Z(t-t_1) \right]_+ .
\eeq
\end{lemma}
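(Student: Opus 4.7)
For part (iii), the plan is to invoke Lemma~\ref{L.2.1} directly. If $Z_y(t_1) = \infty$ the claim is trivial. Otherwise continuity of $u(t_1,\cdot)$ gives a point $x_1$ realizing the infimum in \eqref{3.1}, with $u(t_1, x_1) \geq 1 - \eps_0$. The first claim of Lemma~\ref{L.2.1}, applied with $c := c_Z$ and $\eps := \eps_0$ (its resulting $\tau$ is precisely $\tau_Z$), forces $u(t,x') \geq 1-\eps_0$ for every $x' \in B_{c_Z(t-t_1)}(x_1)$ and every $t \geq t_1 + \tau_Z$. Minimizing $|y - x'|$ over this ball gives \eqref{3.5}.

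For (i) and (ii) I would compare $u$ with the explicit family
\[
v(t,x) := h + \gamma(t)\,\psi(|x-y|), \qquad \gamma(t) := \psi(\rho(t))^{-1}, \qquad \rho(t) := Y^h_y(t_1) - c(t-t_1),
\]
taking $c := c_Y'$ for (i) and $c := c_Y$ for (ii). Closedness in $\rho$ of the set defining $Y^h_y(t_1)$ gives $u(t_1,\cdot) \leq v(t_1,\cdot)$; outside the shrinking ball $|x-y| \leq \rho(t)$ one has $v > h + 1 \geq 1 \geq u$ for free; and using $\Delta\psi(|\cdot-y|) = \zeta'\psi(|\cdot-y|)$ together with $\gamma'/\gamma = c\,\psi'(\rho)/\psi(\rho)$ one computes
\[
v_t - \Delta v = [\gamma'(t) - \zeta'\gamma(t)]\,\psi(|x-y|).
\]
In case (ii) the hypothesis $u \leq \alpha_f$ on $A$ combined with \eqref{3.0b} gives $f(x,u) \leq \zeta'(u-h)$ throughout $A$; the $r_Y$ condition together with $c_Y > 2\sqrt{\zeta'}$ supplies $c_Y \psi'(\rho)/\psi(\rho) - 2\zeta' \geq \mu > 0$ uniformly for $\rho \in [r_Y, Y^h_y(t_1)]$. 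In case (i) no such restriction on $u$ is available, so I would use that $h \leq \tht$ (from \eqref{3.00}) forces $f(x,h) = 0$ and hence Lipschitz yields the coarser but \emph{global} bound $f(x,u) \leq K(u-h)$ for $u \geq h$; definition \eqref{3.0a} of $c_Y'$ then produces $c_Y' \psi'(\rho)/\psi(\rho) - (K+\zeta') \geq \mu' > 0$ on the same interval.

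The comparison is implemented by the standard $\eps$-perturbation. Set $v_\eps := v + \eps$ and assume for contradiction that $u > v_\eps$ somewhere in $(t_1,t_2] \times \bbR^d$. The earliest contact time $t^* \in (t_1, t_2]$ and a spatial maximizer $x^*$ of $u(t^*,\cdot) - v_\eps(t^*,\cdot)$ exist because $v_\eps(t,\cdot) \to \infty$ (Lemma~\ref{L.2.3}) and $u \leq 1$. Since $v_\eps > 1 \geq u$ outside the ball, one must have $|x^*-y| \leq \rho(t^*)$, hence $(t^*, x^*) \in A$ in case (ii). Combining $\partial_t(u - v_\eps)(t^*, x^*) \geq 0$, $\Delta(u - v_\eps)(t^*, x^*) \leq 0$, and the case-appropriate bound on $f(x^*, u(t^*,x^*))$ collapses, in case (ii), to
\[
\zeta'\eps \;\geq\; [\gamma'(t^*) - 2\zeta'\gamma(t^*)]\,\psi(|x^*-y|) \;\geq\; \mu\,\psi\bigl(Y^h_y(t_1)\bigr)^{-1},
\]
and analogously to $K\eps \geq \mu'\,\psi(Y^h_y(t_1))^{-1}$ in case (i). Each is a $u$-independent strictly positive lower bound on $\eps$, contradicting arbitrary smallness of $\eps$. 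Hence $u \leq v_\eps$ throughout $[t_1, t_2] \times \bbR^d$ for all sufficiently small $\eps$, and letting $\eps \to 0$ gives $u(t,\cdot) \leq v(t,\cdot)$, which by definition of $Y^h_y(t)$ as a supremum yields \eqref{3.3} and \eqref{3.4}.

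The main obstacle I anticipate is the passage from the formal supersolution inequality to a genuine comparison on the unbounded spatial domain; the $\eps$-perturbation resolves this by converting the non-strict bound $\gamma' \geq 2\zeta'\gamma$ (resp.\ $\gamma' \geq (K+\zeta')\gamma$) into a $u$-independent quantitative margin $\mu$ (resp.\ $\mu'$), anchored by the strict inequality $c_Y > 2\sqrt{\zeta'}$ and the precise definition \eqref{3.0a} of $c_Y'$. Everything else is bookkeeping, but tracking the case split --- Lipschitz alone in (i), at the price of the coarser speed $c_Y'$, versus the sharper \eqref{3.0b} available on $A$ in (ii) --- is what pins down the two different speeds in the two statements.
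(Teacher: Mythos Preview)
Your proof is correct and follows essentially the same approach as the paper: both construct super-solutions of the form $h + \gamma(t)\psi(|x-y|)$, using the global Lipschitz bound $f(x,u) \le K(u-h)$ in (i) and the sharper bound \eqref{3.0b} on $A$ in (ii), while (iii) is immediate from Lemma~\ref{L.2.1}. The only cosmetic differences are that for (i) the paper takes $\gamma(t) = e^{(K+\zeta')(t-t_1)}\gamma^h_y(t_1)$ and converts to the $Y$-bound afterward via the log-derivative estimate on $\psi$, whereas you parametrize $\gamma(t) = \psi(\rho(t))^{-1}$ directly (your choice coincides exactly with the paper's super-solution in (ii)); the paper also invokes the comparison principle without spelling out the $\eps$-perturbation you supply.
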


 {\it Remark.}  The point here is that (ii) and (iii), together with $c_Y<c_Z$, will  keep $Z_y(t)-Y^{h}_y(t)$ uniformly bounded above.
This is done in Lemma \ref{L.3.2} below.  It turns out, however, that the hypothesis of (ii) is too strong  to make this idea directly applicable for $d\ge 3$.  Lemma \ref{L.3.2} nevertheless still holds for $d=3$, albeit with a considerably more involved proof (see Section \ref{S5} below).  For $d\ge 4$ the lemma is false in general.
 
\begin{proof}
(i) Since $w(t,x):=h+e^{(K +\zeta')(t-t_1) } \gamma^{h}_y(t_1)\psi(|x-y|)$ is a super-solution of \eqref{1.1} due to $f(x,u)\le K(u-\tht)\le K(u-h)$, the comparison principle gives
\beq \lb{3.6}
\gamma^{h}_y(t)\le e^{(K +\zeta')(t-t_1) } \gamma^{h}_y(t_1)
\eeq
for any $t\ge t_1$.  From this and \eqref{3.2} we obtain
\[
\ln \psi(Y^{h}_y(t)) \ge \ln \psi(Y^{h}_y(t_1)) -(K +\zeta')(t-t_1).
\]
Since $\tfrac d{dr}[\ln\psi(r)]\ge 2\zeta'/c_Y$ for $r\ge r_Y$, it follows that
\[
Y^{h}_y(t)\ge Y^{h}_y(t_1) - \frac{(K +\zeta')c_Y}{2\zeta'}(t-t_1) = Y^{h}_y(t_1)-c_Y'(t-t_1)
\]
for all $t\in[t_1,t_2]$, where $t_2\ge t_1$ is the first time such that $Y^{h}_y(t_2)= r_Y$.  Thus $r_Y\ge Y^{h}_y(t_1)-c_Y'(t_2-t_1)$, so $t\le t_2$ due to $Y^{h}_y(t_1)-c_Y'(t-t_1)\ge r_Y$,  and we are done.

(ii)  Let $\beta(t)$ be such that $w(t,x):=h+e^{\beta(t) } \gamma^{h}_y(t_1)\psi(|x-y|)$ equals $1+h$ when $t\in[t_1,t_2]$ and $|x-y|= Y^{h}_y(t_1)-c_Y(t-t_1)$.  Then $\beta(t_1)=0$ and from $\tfrac d{dr}[\ln\psi(r)]\ge 2\zeta'/c_Y$ for $r\ge r_Y$ we obtain $\beta'(t)\ge 2\zeta'$ on $[t_1,t_2]$.  
Thus we have
\[
w_t\ge \Delta w +  \zeta' (w-h).
\]
From $w\ge h$, \eqref{3.0b}, and the hypothesis it follows that $w$ is a super-solution of \eqref{1.1} on $A$.
Since $u(t,x)\le 1\le w(t,x)$ when $t\in[t_1,t_2]$ and $|x-y|\ge Y^{h}_y(t_1)-c_Y(t-t_1)$, we obtain $w\ge u$ for $t\in[t_1,t_2]$ because $u(t_1,\cdot)\le w(t_1,\cdot)$.  Therefore $\gamma^{h}_y(t)\le e^{\beta(t) } \gamma^{h}_y(t_1)$ for  $t\in[t_1,t_2]$ and the result follows.

(iii) This is immediate from Lemma \ref{L.2.1}.
\end{proof}

 The following crucial lemma, which requires $u_t\ge 0$, will enable us to prove the claim in the remark after Lemma \ref{L.3.1}.  It essentially shows that $Y^{h}_y$ cannot decrease faster than at speed $c_Y$ ($<c_Z$) whenever $Z_y$ is much larger than $Y^{h}_y$.

\begin{lemma} \lb{L.3.2}
Let $d\le 3$.  There are ($\tht, h,f,u_0$-independent) $T_Y > 0$ and $\tau_Y\ge T_Y+1$ such that we have the following whenever 
\eqref{3.6a} holds on $\bbR^d$. 
If
\beq \lb{3.7}
Z_y(t_1+\tau_Y)\ge Y^{h}_y(t_1)
\eeq
for some $(t_1,y)\in[t_0,\infty)\times \bbR^d$ and $Y^{h}_y(t_1)-c_YT_Y\ge r_Y$, then
\beq \lb{3.8}
Y^{h}_y(t_1+T_Y)\ge Y^{h}_y(t_1)-c_YT_Y.
\eeq
%
%
\end{lemma}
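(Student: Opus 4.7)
The plan is to argue by contradiction via a compactness/limiting argument that exploits $u_t\ge 0$ to produce a bounded equilibrium solution, then invokes Lemma~\ref{L.2.2} for rigidity.

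First, suppose \eqref{3.8} fails. Since the hypothesis $Y^h_y(t_1)-c_Y T_Y\ge r_Y$ holds, the conclusion of Lemma~\ref{L.3.1}(ii) with $t_2=t_1+T_Y$ would follow from its other hypothesis ``$u\le\alpha_f$ on $A$,'' so the latter must fail: there exists a \emph{defect point} $(t^*,x^*)\in A$ with $u(t^*,x^*)>\alpha_f(x^*)\ge\eta/K$. The monotonicity $u_t\ge 0$ propagates this forward in time, since $\alpha_f$ is $t$-independent: $u(t,x^*)>\alpha_f(x^*)$ for all $t\ge t^*$. Since $Z_y$ is in turn non-increasing in $t$ (by $u_t\ge 0$), hypothesis \eqref{3.7} also propagates backwards, giving $u(t,\cdot)<1-\eps_0$ on $B_{Y^h_y(t_1)}(y)$ for every $t\in[t^*,t_1+\tau_Y]$.

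Now suppose toward contradiction that no fixed $(T_Y,\tau_Y)$ works and take a counter-example sequence with $T_{Y,n},\tau_{Y,n}-T_{Y,n}\to\infty$; form the translated objects $v_n(t,x):=u_n(t^*_n+t,x^*_n+x)$ and $g_n(x,u):=f_n(x^*_n+x,u)$. Parabolic regularity, together with the closure of $F(f_0,K,0,\zeta,\eta)$ under locally uniform convergence (Remark 3 after Definition~\ref{D.1.1a}) and $F(f_0,K,\tht_n,\zeta,\eta)\subseteq F(f_0,K,0,\zeta,\eta)$, produces subsequential limits $(g,v)$ satisfying $v\in[0,1]$, $v_t\ge 0$, $v(0,0)\ge\eta/K$, $g\in F(f_0,K,0,\zeta,\eta)$, and $v(t,x)\le 1-\eps_0$ for all $(t,x)\in[0,\infty)\times\bbR^d$ (using that both $\tau_{Y,n}-T_{Y,n}$ and the spatial radius $c_Y T_{Y,n}$ tend to $\infty$). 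Monotonicity and boundedness then make $v_\infty(x):=\lim_{t\to\infty}v(t,x)$ a bounded equilibrium solution of $\Delta v_\infty+g(\cdot,v_\infty)=0$ on $\bbR^d$ with $v_\infty(0)\ge\eta/K$ and $v_\infty\le 1-\eps_0$ everywhere.

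For $d\le 2$, Lemma~\ref{L.2.2} forces $v_\infty\equiv c$ for some constant $c\in[\eta/K,1-\eps_0]\subset(0,1)$ with $g(\cdot,c)\equiv 0$, which is immediately contradicted: if $c>\tht_0$, the bound $g\ge f_0$ gives $g(\cdot,c)\ge f_0(c)>0$; if $c\le\tht_0$, passing to the limit in the $F$-property $g_n(0,u)\ge\eta$ for $u\in[\alpha_{g_n}(0),\tht_0]$ together with $\alpha_{g_n}(0)<v_n(0,0)\to v(0,0)\le c$ and continuity of $g$ yields $g(0,c)\ge\eta>0$. The main obstacle is the case $d=3$, where Lemma~\ref{L.2.2} no longer forces $v_\infty$ to be constant and only yields the integral bound $\int_{\bbR^3}|x|^{-1}g(x,v_\infty(x))\,dx\le 4\pi$; a substantially more intricate argument, combining this decay with the $F$-property and careful control of how the defect region must propagate, is required, and I defer it to Section~\ref{S5}. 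For $d\ge 4$ this strategy collapses entirely since Lemma~\ref{L.2.2} provides no rigidity at all, reflecting the genuine failure of Lemma~\ref{L.3.2} in that dimension, consistent with the counterexample in Theorem~\ref{T.1.2}(ii).
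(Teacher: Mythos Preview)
Your argument for $d\le 2$ is correct and uses the same core ingredients as the paper (compactness to produce a time-independent limit, then rigidity via Lemma~\ref{L.2.2}), and you correctly defer $d=3$ to Section~\ref{S5}. The paper organizes this slightly differently: rather than contradicting the full lemma, it first isolates the auxiliary pointwise claim that $u_t\ge 0$ and $u(0,0)>\alpha_f(0)$ force $u(\tau,0)>1-\eps_0$ for some uniform $\tau$, and then applies its contrapositive at each $x\in B_{Y^h_y(t_1)}(y)$ at time $t_1+T_Y$ to verify the hypothesis of Lemma~\ref{L.3.1}(ii) directly. This cleaner packaging yields the stronger conclusion that \emph{any} $T_Y>0$ works for $d\le 2$ (Remark~1 after the lemma), whereas your diagonal sequence with $T_{Y,n}\to\infty$ only gives existence of some pair $(T_Y,\tau_Y)$.

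One imprecision worth flagging: your assertion that $v(t,x)\le 1-\eps_0$ for \emph{all} $x\in\bbR^d$ is not actually justified by ``the spatial radius $c_YT_{Y,n}\to\infty$.'' The defect point $(t^*_n,x^*_n)\in A_n$ only satisfies $|x^*_n-y_n|\le Y^{h_n}_{y_n}(t_{1,n})-c_Y(t^*_n-t_{1,n})$, and since $t^*_n$ may equal $t_{1,n}$, the point $x^*_n$ can sit arbitrarily close to $\partial B_{Y^{h_n}_{y_n}(t_{1,n})}(y_n)$; after translation the region where $u_n<1-\eps_0$ need not exhaust $\bbR^d$. Fortunately your contradiction for $d\le 2$ only needs $v_\infty(0)\le 1-\eps_0$ (since $v_\infty$ is constant there), and that does follow from $\tau_{Y,n}-T_{Y,n}\to\infty$ alone, so the proof stands.
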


{\it Remarks.}  
1. For $d\le 2$ we can take {\it any} $T_Y>0$.  For $d=3$ any large enough $T_Y$ works.  
\smallskip

2.  When $d\ge 4$, this result fails in general!  The same is true for $d\ge 1$ if $f$ satisfies (H) but we do not require  $f\in F(f_0,K,\tht,\zeta,\eta)$.
\smallskip

\begin{proof}
%
%
%
We split the proof in two cases, $d\le 2$ and $d=3$, due to Lemma \ref{L.2.2}.

{\bf Case $d\le 2$:}  We first claim that there is $\tau\ge 1$ such that if a solution $u\in[0,1]$ of \eqref{1.1} on $(0,\infty)\times\bbR^d$ satisfies $u_t\ge 0$ and $u(0,0)>\alpha_f(0)$, then $u(\tau,0)>1-\eps_0$.  Assume that for each $\tau=1,2,\dots$ there is some couple $f_\tau\in F(f_0,K,0,\zeta,\eta)$ and $u_\tau$ contradicting this statement with $(f,u)=(f_\tau,u_\tau)$. Then parabolic regularity shows that there is a sequence $\tau_j\to\infty$ such that $f_{\tau_j}$ and  $u_{\tau_j}$  converge locally uniformly on $\bbR^d\times[0,1]$ and on $(0,\infty)\times\bbR^d$  to some $f\in F(f_0,K,0,\zeta,\eta)$ and some solution $u\in[0,1]$ of \eqref{1.1} such that $u_t\ge 0$ and $\lim_{t\to\infty}u(t,0)\le 1-\eps_0$.  Moreover, $u_{\tau_j}(0,0)\ge \alpha_{f_{\tau_j}}(0)$  and $f_{\tau_j}\in F(f_0,K,0,\zeta,\eta)$ guarantee that $f(0,\cdot)\ge f_0(\cdot)+\eta\chi_{[u(0,0),\tht_0]}(\cdot)$.  But then $v(x):=\lim_{t\to\infty} u(t,x)$ satisfies \eqref{2.2} on $\bbR^d$ (so it is constant by Lemma \ref{L.2.2}) with  $f(0,v(0))>0$, a contradiction.

We now pick any $T_Y>0$ and apply this claim with the point $(0,0)$ shifted to $(t_1+T_Y,x)$, for any $x\in B_{Y^{h}_y(t_1)}(y)$.  If we  let $\tau_Y:=T_Y+\tau$, it follows from \eqref{3.7} that $u(t_1+T_Y,x)\le \alpha_f(x)$, and thus $u(t,x)\le \alpha_f(x)$ for all $(t,x)\in[t_1,t_1+T_Y]\times B_{Y^{h}_y(t_1)}(y)$.  Lemma \ref{L.3.1}(ii) now yields \eqref{3.8}.

{\bf Case $d=3$:}  This case is considerably more involved, due to the limitation in Lemma \ref{L.2.2}.  We postpone its proof 
until Section \ref{S5} in order to not interrupt the flow of the presentation.
\end{proof}

Note that in the case $d\le 2$, this result  holds even if \eqref{1.4b} is replaced by 
\[
\inf_{\substack{x\in\bbR^d \\  u\in[\alpha_f(x),\tht_0]}} \sup_{y\in B_R(x)} f(y,u)  \ge \eta
\]
 for some $R<\infty$, because we still obtain $f(x,v(x))>0$ for the constant function $v$ and some $|x|\le R$.  Theorems \ref{T.1.2}(i) and \ref{T.1.3} also extend accordingly.
 
The following result is at the heart of the proofs of our main results.

\begin{theorem} \lb{T.3.3}
Let $d\le 3$, let $f_0,K$ be as in (H), and let  $\eta>0$ and $\zeta\in(0,c_0^2/4)$. 

(i) There is $M>0$ such that if $\tht\ge 0$, $h$ satisfies \eqref{3.00}, $f\in F(f_0,K,\tht,\zeta,\eta)$,  $u_0\in[0,1]$ satisfies \eqref{3.6a}, and $u$ solves \eqref{1.1}, \eqref{1.2} on $(t_0,\infty)\times\bbR^d$, then for any $(t,y)\in (t_0,\infty)\times \bbR^d$ we have
\beq \lb{3.9}
Z_y(t)-Y^{h}_y(t) \le M + \left [Z_y(t_0)-Y^{h}_y(t_0) - \left(\frac{c_0}2 - \sqrt{\zeta'} \right) (t-t_0) \right]_+  .
\eeq
Moreover, for any $\eps\in(h,\tfrac 12)$ there is ($\tht,h,f,u_0$-independent) $\tau_\eps>0$, continuous and  non-increasing in $\eps>0$,  such that
\beq \lb{3.9a}
L_{u,\eps}(t) \le M_{\eps-h} + \left [ \sup_{y\in\bbR^d} \left( Z_y(t_0)-Y^{h}_y(t_0) \right) - \left(\frac{c_0}2 - \sqrt{\zeta'} \right) (t-t_0) \right]_+  
\eeq
for any $t\ge t_0+\tau_\eps$, with $M_{\delta}:=M+ c_Y' \tau_\delta+ \psi^{-1}(\delta^{-1})$.

(ii) If $\tht,h,M,M_{\delta},\tau_\eps,f,u$ are from (i) and $v\in[0,1]$ satisfies
\beq\lb{3.9b}
u(t-T,\cdot)-\frac \eps 2 \le v(t,\cdot) \le u(t+T,\cdot) + \frac \eps 2
\eeq
for some  $\eps\in(2h,\tfrac 12)$, $T\ge 0$, and $t\ge t_0+T+\tau_{\eps/2}$, then for such $t$,
\beq \lb{3.9c}
L_{v,\eps}(t) \le M_{\eps/2-h} + 3c_Y'T+ \left [ \sup_{y\in\bbR^d} \left( Z_y(t_0)-Y^{h}_y(t_0) \right) - \left(\frac{c_0}2 - \sqrt{\zeta'} \right) (t-t_0) \right]_+  
\eeq
\end{theorem}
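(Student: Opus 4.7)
\noindent\textbf{Proof plan for Theorem \ref{T.3.3}(ii).} The plan is to reduce part (ii) to estimates already proved for $u$ in part (i), via the sandwich \eqref{3.9b}. The upper sandwich $v(t,\cdot)\le u(t+T,\cdot)+\eps/2$ gives $\Omega_{v,\eps}(t)\subseteq \Omega_{u,\eps/2}(t+T)$, and the lower sandwich $v(t,\cdot)\ge u(t-T,\cdot)-\eps/2$ gives $\Omega_{u,1-\eps/2}(t-T)\subseteq \Omega_{v,1-\eps}(t)$ (meaningful because $\eps>2h$ ensures $\eps/2>h$). Hence $L_{v,\eps}(t)$ is at most the one-sided Hausdorff distance from $\Omega_{u,\eps/2}(t+T)$ to $\Omega_{u,1-\eps/2}(t-T)$, and the task is: for each $x$ with $u(t+T,x)\ge\eps/2$, produce $\hat x$ with $u(t-T,\hat x)\ge 1-\eps/2$ within the claimed distance of $x$.

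Fix $\tau$ as the wait time from Lemma \ref{L.2.1} (with $c=0$ and target closeness $\eps/2-h$) so that $u(s,z)\ge 1-\eps_0$ implies $u(s+\tau,z)\ge 1-(\eps/2-h)\ge 1-\eps/2$, and set $\hat t:=t-T-\tau$. The bound $u(t+T,x)\ge\eps/2>h$ forces $Y^h_x(t+T)\le \psi^{-1}((\eps/2-h)^{-1})$ by \eqref{3.2}. Applying Lemma \ref{L.3.1}(i) between $\hat t$ and $t+T$ (time gap $2T+\tau$) with $y=x$ and taking the contrapositive against the above $Y^h$ bound yields $Y^h_x(\hat t)\le \max\{r_Y,\psi^{-1}((\eps/2-h)^{-1})\}+c_Y'(2T+\tau)$. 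Since \eqref{3.9} is valid for all $t>t_0$ (unlike \eqref{3.9a}), it may be invoked at time $\hat t$ with $y=x$ to produce $\hat x$ with $u(\hat t,\hat x)\ge 1-\eps_0$ and $|x-\hat x|\le Y^h_x(\hat t)+M+[\sup_y(Z_y(t_0)-Y^h_y(t_0))-(c_0/2-\sqrt{\zeta'})(\hat t-t_0)]_+$. By the choice of $\tau$ and Lemma \ref{L.2.1} applied forward over $[\hat t,t-T]$ at $\hat x$, this $\hat x$ lies in $\Omega_{u,1-\eps/2}(t-T)\subseteq\Omega_{v,1-\eps}(t)$.

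The remaining step is to collect constants. Writing $(c_0/2-\sqrt{\zeta'})(\hat t-t_0)=(c_0/2-\sqrt{\zeta'})(t-t_0)-(c_0/2-\sqrt{\zeta'})(T+\tau)$ converts the decay bracket at $\hat t$ into the desired one at $t$ plus an extra $(c_0/2-\sqrt{\zeta'})(T+\tau)$. Combining with the $c_Y'(2T+\tau)$ from the Lemma \ref{L.3.1}(i) transport and using $c_0/2-\sqrt{\zeta'}\le c_Y'$ (which follows from \eqref{3.0a} via $c_Y'\ge 2\sqrt K\ge c_0$), the $T$-terms sum to at most $(2c_Y'+c_0/2-\sqrt{\zeta'})T\le 3c_Y'T$. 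The remaining $\tau$-terms together with $M$ and $\psi^{-1}((\eps/2-h)^{-1})$ assemble into $M_{\eps/2-h}=M+c_Y'\tau_{\eps/2-h}+\psi^{-1}((\eps/2-h)^{-1})$ upon identifying $\tau$ with $\tau_{\eps/2-h}$. The main technical obstacle is precisely this last identification: one must verify that the $\tau$ coming from Lemma \ref{L.2.1} matches the $\tau_\delta$ built into $M_\delta$, which is arranged in the proof of part (i), and also check that $\hat t>t_0$ is guaranteed by the hypothesis $t\ge t_0+T+\tau_{\eps/2}$ (via the monotonicity of $\tau_\delta$ in $\delta$ stipulated in part (i)).
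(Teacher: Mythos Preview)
Your approach is exactly the paper's: sandwich to reduce $L_{v,\eps}(t)$ to the distance from $\Omega_{u,\eps/2}(t+T)$ to $\Omega_{u,1-\eps/2}(t-T)$, transport $Y^h_x$ backward by Lemma~\ref{L.3.1}(i), apply \eqref{3.9} at $\hat t=t-T-\tau$, then push forward with Lemma~\ref{L.2.1}. The only discrepancy---and the resolution of the obstacle you flag---is that the paper invokes Lemma~\ref{L.2.1} with $c=c_0/2$ rather than $c=0$: this lets the point with $u(t-T,\cdot)\ge 1-\eps/2$ sit $\tfrac{c_0}{2}\tau$ closer to $x$ than $\hat x$ does, producing an extra $-\tfrac{c_0}{2}\tau$ that turns your leftover $(c_0/2-\sqrt{\zeta'})\tau$ into $-\sqrt{\zeta'}\tau\le 0$. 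Then the $\tau$-contribution is just $c_Y'\tau$, and since part~(i) sets $\tau_\delta:=\tau+1$ with $\tau$ the Lemma~\ref{L.2.1} wait at $c=c_0/2$ and closeness $\delta$, one gets $c_Y'\tau\le c_Y'\tau_{\eps/2}\le c_Y'\tau_{\eps/2-h}$ and the constants assemble to $M_{\eps/2-h}$; with your $c=0$ the $\tau$-terms would total $(c_Y'+c_0/2-\sqrt{\zeta'})\tau$ and overshoot.
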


{\it Remarks.} 1.  Recall also the bound from below in \eqref{3.2b}.
\smallskip

2.  $\tfrac 12 c_0-\sqrt{\zeta'}$ can be replaced by any $c<c_0-2\sqrt{\zeta'}$, and then $M,M_{\delta}$ also depend on $c$.
\smallskip

3. Obviously $M_{\delta}$ is continuous and decreasing  in $\delta>0$.
\smallskip

\begin{proof}
(i) Let us start with \eqref{3.9}.  Assume, without loss, that $y=0$ and $t_0=0$, and denote $Y^{h}_0=Y$ and $Z_0=Z$.  Recall that $c_Z=\tfrac 34 c_0+\tfrac 12 \sqrt{\zeta'}$ and $c_Y=\tfrac 14 c_0+\tfrac 32 \sqrt{\zeta'}$, so that $c_Z-c_Y= \tfrac 12 {c_0} - \sqrt{\zeta'}$, and then pick $c_Y',\tau_Z,r_Y,T_Y,\tau_Y$ as above (all these constants are independent of $\tht,h$).  

We can assume $Z(t)>0$ because otherwise the claim is obvious.  It is also sufficient to prove the claim for $t$ such that $Y(t)\ge c_Y'(\tau_Y+\tau_Z) + r_Y$ because then the result follows for all $t> 0$ after increasing M by $ c_Y'(\tau_Y+\tau_Z) +r_Y$.  This is because $Z$ (and also $Y$) is non-increasing due to \eqref{3.6a}.  We also note that $Y$ is then continuous by Lemma \ref{L.3.1}(i), while $Z$ is right-continuous and lower-semi-continuous by continuity of $u$ on $(0,\infty)\times\bbR^d$.  Finally, we can assume that $t>\tau_Y+\tau_Z$, because for $t\in(0,\tau_Y+\tau_Z]$ the estimate follows for any $M\ge  (c_Y'+\tfrac 12 c_0-\sqrt{\zeta'})(\tau_Y+\tau_Z) $ due to Lemma \ref{L.3.1}(i), $Z$ and $Y$ being non-increasing, and the assumption $Y(t)\ge c_Y'(\tau_Y+\tau_Z) + r_Y$.

We will now prove \eqref{3.9} assuming $t>\tau_Y+\tau_Z$, $Z(t)>0$ and $Y(t)\ge  c_Y'(\tau_Y+\tau_Z) + r_Y$, with 
\[
M:=c_Z\tau_Y+ c_Y'(\tau_Y+\tau_Z).
\]
Let $t_2$ be the smallest number in $[0,t-\tau_Y]$ such that $Z(t_1+\tau_Y) \ge Y(t_1)$ for all $t_1\in(t_2,t-\tau_Y)$.    Lower-semi-continuity of $Z(\cdot+\tau_Y)-Y(\cdot)$ now shows the following.  If $t_2=0$, then $Z(\tau_Y) \ge Y(0)$; if $t_2\in(0,t-\tau_Y)$, then $Z(t_2+\tau_Y) = Y(t_2)$; and  if $t_2=t-\tau_Y$, then $Z(t) \le Y(t-\tau_Y)$.  

If $t_2=0$, let $N:=\lfloor t/T_Y \rfloor$.  Applying Lemma \ref{L.3.1}(i) once and then Lemma \ref{L.3.2} $N$ times, we obtain using  $Y(t)- c_Y'T_Y \ge r_Y$ (recall that $\tau_Y>T_Y$),
\[
Y(t)\ge Y(NT_Y) - c_Y'T_Y \ge Y(0) - Nc_YT_Y - c_Y'T_Y \ge Y(0) - c_Y t - c_Y'T_Y.
\]
On the other hand, Lemma \ref{L.3.1}(iii) and $t\ge \tau_Y+\tau_Z$ yield  
\[
Z(t) \le Z(\tau_Y) - c_Z(t-\tau_Y) \le Z(0) -c_Z t + c_Z \tau_Y
\]
(notice that $Z(\tau_Y)-c_Z(t-\tau_Y)>0$ because otherwise $Z(t)=0$ by Lemma~\ref{L.3.1}(iii)).
Thus 
\[
Z(t)-Y(t) \le c_Z\tau_Y+ c_Y'T_Y + Z(0)-Y(0) - (c_Z-c_Y) t \le M + Z(0)-Y(0) - (c_Z-c_Y) t.
\]

If $t_2\in(0,t-\tau_Y-\tau_Z)$, then let $N:=\lfloor (t-t_2)/T_Y \rfloor$.  An identical argument now yields
\[
Y(t)\ge Y(t_2) - c_Y(t-t_2)-c_Y'T_Y
\]
and
\[
Z(t) \le Z(t_2+\tau_Y) - c_Z(t-t_2-\tau_Y).
\]
Thus $Z(t_2+\tau_Y) = Y(t_2)$ yields
\[
Z(t)-Y(t) \le c_Z\tau_Y+ c_Y'T_Y + Z(t_2+\tau_Y)-Y(t_2) - (c_Z-c_Y) (t-t_2) \le c_Z\tau_Y+ c_Y'T_Y \le M .
\]

If $t_2\in[t-\tau_Y-\tau_Z,t-\tau_Y]$, then $Z(t_2+\tau_Y) \le Y(t_2)$, so that
\[
Z(t)-Y(t)\le Z(t_2+\tau_Y) -Y(t) \le Y(t_2) - Y(t) \le c_Y'(\tau_Y+\tau_Z) \le M.
\]
by Lemma \ref{L.3.1}(i).  The proof of \eqref{3.9} is finished.

Let us now turn to \eqref{3.9a} and again assume $t_0=0$. Let $c:=\tfrac 12 c_0$, and for any $\eps\in(h,\tfrac 12)$ let $\tau_\eps:=\tau+1$, with $\tau$ from Lemma~\ref{L.2.1} (this can obviously be chosen continuous and non-increasing in $\eps>0$).  For $t\ge \tau_\eps$, let $x\in\Omega_{u,\eps}(t)$.  Then $Y^{h}_x(t)\le \psi^{-1}((\eps-h)^{-1})$, so
\[
Y^{h}_x(t-\tau)\le \psi^{-1}((\eps-h)^{-1})+c_Y'\tau \le \psi^{-1}((\eps-h)^{-1})+c_Y'\tau_\eps
\]
 by Lemma \ref{L.3.1}(i), and  \eqref{3.9} gives
\[
Z_x(t-\tau) \le \psi^{-1}((\eps-h)^{-1})+c_Y'\tau_\eps + M + \left [\sup_{y\in\bbR^d} (Z_y(0)-Y^{h}_y(0)) - \left(\frac{c_0}2 - \sqrt{\zeta'} \right) (t-\tau) \right]_+ .
\]
Lemma \ref{L.2.1} with $t_1:=t-\tau$ now shows that there is $y$ with $u(t,y)\ge 1-\eps$ and 
\[
|y-x| \le \psi^{-1}((\eps-h)^{-1})+c_Y'\tau_\eps + M + \left [\sup_{y\in\bbR^d} (Z_y(0)-Y^{h}_y(0)) - \left(\frac{c_0}2 - \sqrt{\zeta'} \right) (t-\tau) \right]_+ - \frac {c_0}2\tau.
\]
This yields \eqref{3.9a} for $t\ge\tau_\eps$ (recall that $t_0=0$ and $\tau\ge 0$).

(ii) Again assume $t_0=0$.  An identical argument to the last one shows that if $t\ge T+\tau_{\eps/2}$ and $x\in\Omega_{u,\eps/2}(t+T)$ (the latter holds when $v(t,x)\ge \eps$), then 
\[
Y^{h}_x(t-T-\tau)\le \psi^{-1} \left( \left( \frac\eps 2-h \right)^{-1} \right)+c_Y'(2T+\tau_{\eps/2}),
\]
and ultimately that there is $y$ with $u(t-T,y)\ge 1-\tfrac \eps 2$ (which yields $v(t,y)\ge 1-\eps$) and 
\[
|y-x| \le \psi^{-1} \left( \left( \frac\eps 2-h \right)^{-1} \right)+c_Y'(2T+\tau_{\eps/2}) + M + \left [\sup_{y\in\bbR^d} (Z_y(0)-Y^{h}_y(0)) - \left(\frac{c_0}2 - \sqrt{\zeta'} \right) (t-T-\tau) \right]_+ - \frac {c_0}2\tau.
\]
This proves \eqref{3.9c} because $\tfrac 12 c_0-\sqrt{\zeta'}\le c_Y'$.
\end{proof}

Before moving onto the proofs of our main results, we state an important corollary of Theorem \ref{T.3.3}.  For a solution $u$  of \eqref{1.1} on $(t_0,\infty)\times\bbR^d$ and for $t\ge t_0$, we let
\beq\lb{3.9d}
\Lambda^h_u(t) := \sup_{y\in\bbR^d} \left(Z_y(t)-Y^{h}_y(t) \right) \qquad(\le\infty)
\eeq
for $h$ from \eqref{3.00}.  Then $\Lambda^h_u$ is non-increasing and right-continuous in $h$, by  definition of $Y^h_y$.

Notice that if $\Lambda^h_u$ is finite, then it controls $L_{u,\eps}$ for any $\eps\in(h,\tfrac 12)$.
Indeed, the argument proving \eqref{3.9a} from \eqref{3.9} applies to any $u$ (even if $u_t\not\ge 0$) and, with the notation from Theorem \ref{T.3.3}, yields for $t\ge t_0+\tau_\eps$,
\beq \lb{3.9e}
L_{u,\eps}(t) \le M_{\eps-h} - M + \Lambda^h_{u}(t-\tau_\eps+1).
\eeq

For ignition  $f$ we also have the opposite direction.  For  any $(t,y)\in(t_0,\infty)\times\bbR^d$ and $h\in\left(0,\min\left\{ \tht(c_0^2-4\zeta)(c_0^2+4\zeta)^{-1},\frac\eta{4K},\eps_0 \right\} \right]$, we have
\[
\sup_{|x-y|<Y^{h}_y(t)} u(t,x)> h
\]
because otherwise we would have $u(t,\cdot)\le  h+\gamma\psi(|\cdot|)$ for some $\gamma<\psi(Y^{h}_y(t))^{-1}$, contradicting the definition of $Y^{h}_y(t)$.  But then $Z_y(t)\le Y^{h}_y(t)+L_{u,h}(t)$ by $h\le\eps_0$, so 
\beq \lb{3.9f}
\Lambda^h_u(t)\le L_{u,h}(t).
\eeq

We now have the following result for entire solutions with $u_t\ge 0$.

\begin{corollary} \lb{C.3.3a}
Let $d\le 3$, let $f_0,K,$ and $\tht\ge 0$ be as in (H), and let  $\eta>0$, $\zeta\in(0,c_0^2/4)$, and  $f\in F(f_0,K,\tht,\zeta,\eta)$. Assume that $u\in[0,1]$ solves \eqref{1.1} and satisfies $u_t\ge 0$ on $\bbR\times\bbR^d$. 

(i) If $h$ is as in \eqref{3.00} and $\limsup_{t\to-\infty} \Lambda^h_u(t)<\infty$, then in fact $\sup_{t\in\bbR} \Lambda^h_u (t) \le M$ and  $\sup_{t\in\bbR} L_{u,\eps}(t) \le M_{\eps-h}$ for any $\eps\in(h,\tfrac 12)$  (here $M,M_{\delta}$ are from Theorem \ref{T.3.3}).  We  also have
\beq \lb{3.9g}
\inf_{ u(t,x)\in[\eps,1-\eps]} u_t(t,x) \ge \mu_{\eps,M_{\eps/2-h}}
\eeq
 for any $\eps\in(2h,2\eps_0]$, where $\mu_{\eps,\ell}>0$ is the $\inf$ in \eqref{1.7b}.
 
 (ii) If $\tht>0$ and $u$ has bounded width, then $\sup_{t\in\bbR} \Lambda^0_u (t) \le M$ (and so (i) holds with $h=0$ and $\eps\in(0,\tfrac 12)$).  Moreover, if a pure ignition $f_1$ 
 satisfies \eqref{1.4e}, then 
$u$ propagates with global mean speed in $[c_0,c_1]$, with $\tau_{\eps,\delta}$ in Definition \ref{D.1.1b} depending only on  $\delta,f_1,\eps,f_0,K,\zeta,\eta$.
\end{corollary}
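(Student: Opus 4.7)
The plan is to apply Theorem \ref{T.3.3}(i) to $u$ viewed as a solution on $(s,\infty)\times\bbR^d$ for arbitrary $s\in\bbR$. Since $u_t\ge 0$ everywhere, each time-slice $u(s,\cdot)$ satisfies \eqref{3.6a}, so the hypotheses are met with $t_0=s$. Taking the supremum over $y$ in \eqref{3.9} yields
\[
\Lambda^h_u(t)\le M+\left[\Lambda^h_u(s)-\bigl(\tfrac{c_0}{2}-\sqrt{\zeta'}\bigr)(t-s)\right]_+
\qquad(s<t).
\]
For fixed $t$ the bracket tends to zero as $s\to-\infty$, because $\Lambda^h_u(s)$ stays bounded by hypothesis while the linear penalty grows. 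This produces $\sup_{t\in\bbR}\Lambda^h_u(t)\le M$, and then the width bound $\sup_{t\in\bbR}L_{u,\eps}(t)\le M_{\eps-h}$ for $\eps\in(h,\tfrac12)$ follows from the general inequality \eqref{3.9e}. For the gradient bound \eqref{3.9g}, fix $\eps\in(2h,2\eps_0]$ and set $\ell:=M_{\eps/2-h}$. For any $T\in\bbR$, monotonicity of $L_{u,\cdot}$ together with monotonicity of $\delta\mapsto M_\delta$ places the time-shift $(f,u(\cdot+T,\cdot))$ in $S_{0,\eps/2,\ell}(f_0,K,0)$ (the $\tht=0$ requirement in (H) is vacuous), and Lemma \ref{L.8.1}(iii) then supplies $u_t(T+1,x)\ge\mu_{\eps,\ell}$ at any $x$ where $u(T+1,x)\in[\eps,1-\eps]$. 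Since $T$ is arbitrary, this yields \eqref{3.9g}.

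\textbf{Strategy for (ii).} I would first verify the hypothesis of (i) for strictly positive $h$ and then pass to $h=0$ by monotonicity. Because $\tht>0$, one can pick $h>0$ lying in the admissible range of \eqref{3.00} and additionally $h\le\eps_0$. For such $h$ the opposite-direction estimate \eqref{3.9f} gives
\[
\Lambda^h_u(t)\le L_{u,h}(t)\le \sup_{s\in\bbR}L_{u,h}(s)<\infty,
\]
where the last inequality is the bounded-width assumption (for entire solutions, $L^{u,h}=\sup_{t\in\bbR}L_{u,h}(t)$). Hence (i) is applicable and yields $\sup_{t\in\bbR}\Lambda^h_u(t)\le M$. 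Since $h\mapsto\Lambda^h_u(t)$ is non-increasing and right-continuous at $0$ (as recorded immediately after the definition \eqref{3.9d}), letting $h\downarrow 0$ along admissible values upgrades this to $\Lambda^0_u(t)\le M$ for every $t\in\bbR$, so all conclusions of (i) hold with $h=0$ and $\eps\in(0,\tfrac12)$.

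\textbf{Propagation speed and the main obstacle.} The propagation claim then follows by feeding the uniform width bounds into Lemma \ref{L.2.1a}. Given $\eps\in(0,\tfrac12)$ and $\delta>0$, Lemma \ref{L.2.1a} (applied with $\delta/2$ in place of $\delta$) produces some $\eps'>0$ and $\tau<\infty$ depending only on $\eps,\delta,f_0,K,f_1$; choosing $L:=M_{\eps'}$ and letting $t_3\to\infty$, its two inclusions hold for every $t_1\in\bbR$ and $t_2\ge t_1+\tau$. Setting
\[
\tau_{\eps,\delta}:=\max\!\left\{\tau,\,\tfrac{2M_{\eps'}}{\delta}\right\}
\]
absorbs the additive constants $\pm M_{\eps'}$ into the linear spreading and converts those inclusions into the ones of Definition \ref{D.1.1b} with $c=c_0$, $c'=c_1$; all tracked constants depend only on $\delta,f_1,\eps,f_0,K,\zeta,\eta$. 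The only delicate step in the whole argument is the passage $h\downarrow 0$ in (ii), which rests entirely on right-continuity of $h\mapsto\Lambda^h_u(t)$ at $0$; without this, \eqref{3.9f} breaks down at $h=0$ and an independent proof of $\Lambda^0_u(t)\le M$ would be needed.
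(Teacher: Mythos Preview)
Your proposal is correct and follows essentially the same route as the paper: apply \eqref{3.9} with $t_0=s\to-\infty$ to obtain $\sup_{t}\Lambda^h_u(t)\le M$, deduce the width bound via \eqref{3.9e}, obtain \eqref{3.9g} from Lemma~\ref{L.8.1}(iii) by time-shifting, then in (ii) use \eqref{3.9f} for small $h>0$ followed by right-continuity of $h\mapsto\Lambda^h_u$ to pass to $h=0$, and finally invoke Lemma~\ref{L.2.1a} with $\delta/2$ and $L=M_{\eps'}$ for the propagation claim. Your identification of the $h\downarrow 0$ passage as the delicate step is apt, and your handling of it matches the paper's.
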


{\it Remark.}  Recall that $M,M_{\eps},\mu_\eps$ depend on $f_0,K,\zeta,\eta$ (and $\eps$)  {\it but not on $\tht,h,f,u$}.    This and Theorem \ref{T.1.5}(ii) will be the key to the independence of the bounds in Theorem \ref{T.1.2}(i) on $u_0$.

\begin{proof}
(i) The first claim is immediate from \eqref{3.9} after letting $u_0(x):=u(t_0,x)$ and then sending $t_0\to-\infty$.  The second then follows from \eqref{3.9e}, and the third  from Lemma \ref{L.8.1}(iii) with $\tht=0$, applied to $u$ shifted in time by $1-t$.

(ii) For any $h\in\left(0,\min\left\{ \tht(c_0^2-4\zeta)(c_0^2+4\zeta)^{-1},\frac\eta{4K},\eps_0 \right\} \right]$, \eqref{3.9f}
 and (i) show $\sup_{t\in\bbR} \Lambda^h_u (t) \le M$, and right-continuity of $\Lambda^h_u$ in $h$ then yields $\sup_{t\in\bbR} \Lambda^0_u (t) \le M$.  The second claim now follows from Lemma \ref{L.2.1a} with $\delta/2$ instead of $\delta$, using the bound $L_{u,\eps'}(t) \le M_{\eps'}$ for  $\eps'$ from that lemma (which holds by \eqref{3.9e} with $h=0$). Indeed, we only need to take $\tau_{\eps,\delta}\ge \max\{2\delta^{-1}M_{\eps'},\tau\}$ in Definition \ref{D.1.1b}, with $\tau$ from Lemma \ref{L.2.1a}.  
\end{proof}

{\it Open problem.} It is an interesting question whether there is a transition solution $u$ satisfying all the hypotheses of Corollary \ref{C.3.3a}(ii), except of the hypothesis of bounded width, such that $\Lambda^0_u$ is unbounded (cf.~Open problem 2 after Theorem \ref{T.1.5}). It is obvious from \eqref{3.9f} and \eqref{3.9a} that in that case one would have $\liminf_{t\to-\infty} |t|^{-1} \Lambda^h_u(t)>0$.

\section{Proof of  Theorem \ref{T.1.3}(i)} \lb{S4}

We can  assume $u_0\not\equiv 0,1$ because then the result holds trivially.  
As in Section \ref{S3}, all constants  will depend on $f_0,K,\zeta,\eta$ (but not on $\tht$ from (H), unless explicitly noted).

The second claim in \eqref{1.7} follows immediately from the first.  Indeed: it is sufficient to prove it for $\eps\in(0,2\eps_0]$;  if $\mu_{\eps,\ell}>0$ is the $\inf$ in \eqref{1.7b} for such $\eps$ and $\ell_\eps,T_\eps$ are from the first claim in \eqref{1.7}, then the second claim follows  with $m_\eps:= \mu_{\eps,\ell_{\eps/2}}$ and $T_\eps$ replaced by $T_{\eps/2}+1$, after applying Lemma \ref{L.8.1}(iii) to $u$ shifted in time by $-(t_0+T_{\eps/2})$.  

Similarly, the claim about global mean speed also follows from the first claim in \eqref{1.7}.  Indeed, if $\eps',\tau$ are from Lemma \ref{L.2.1a} with $\delta/2$ instead of $\delta$, then that lemma shows that we only need $T_{\eps,\delta}\ge T_{\eps'}+1$ and   $\tau_{\eps,\delta}\ge\max\{2\delta^{-1}\ell_{\eps'},\tau\}$ in  Definition \ref{D.1.1b}.

We are left with proving the first claim in \eqref{1.7} (which also proves that $u$ has a bounded width).  We will do so with $\ell_\eps:=M_{\eps/2}$ from Theorem \ref{T.3.3}.  We define $Z_y,Y^{h}_y$ as in Section \ref{S3} and split the proof into two cases.

{\bf Case $d=3$:}  Let $\eps':=1-\eps_0$ (which depends on $f_0,K$) and given any $\eps\in(0,\tfrac 12)$, 
let $h:=\min\left\{ \tht(c_0^2-4\zeta)(c_0^2+4\zeta)^{-1},\frac\eta{4K},\eps_0,\tfrac \eps 2 \right\}$.  
The argument which proves \eqref{3.9f} now shows
$Z_y(t_0)\le Y^{h}_y(t_0)+L_{u,h,\eps'}(t_0)$ for each $y\in\bbR^3$. Hence the  right-hand side of \eqref{3.9a} equals $M_{\eps-h}\,(\le M_{\eps/2})$ for all large enough $t$ and we are done.  

{\bf Case  $d\le 2$:}  First, there is $\tau\ge 1$ such that if a solution $u\in[0,1]$ of \eqref{1.1} on $(t_0,\infty)\times\bbR^d$ with $f$ as in the theorem satisfies $u_t\ge 0$ and $u(t_0,x)\ge\eps'$, then $u(t_0+\tau,x)>1-\eps_0$. This is proved just as a similar claim in the proof of Lemma \ref{L.3.2}.  

Define now $Z_y'$ as $Z_y$ but with $\eps'$ in place of $1-\eps_0$, and given any $\eps\in(0,\tfrac 12)$, 
let $h:=\min\left\{ \tht(c_0^2-4\zeta)(c_0^2+4\zeta)^{-1},\frac\eta{4K},1-\eps',\tfrac \eps 2 \right\}$. 
The argument which proves \eqref{3.9f} now shows $Z_y'(t_0)\le Y^{h}_y(t_0)+L_{u,h,\eps'}(t_0)$, and then the previous paragraph and Lemma \ref{L.3.1}(i) yield
\[
Z_y(t_0+\tau)\le Y^{h}_y(t_0)+L_{u,h,\eps'}(t_0)\le Y^{h}_y(t_0+\tau)+c_Y'\tau+L_{u,h,\eps'}(t_0).
\]
This holds for all $y\in\bbR^d$, so we conclude as in the first case.

The proof of Theorem \ref{T.1.3}(i) is finished.
\smallskip


\begin{proof}[Proof of Remark 2 after Theorem \ref{T.1.3}]  The second claim in \eqref{1.7} follows from the first as above.  To prove the first claim in \eqref{1.7}, again consider two cases.

{\bf Case $d=3$:}  Let $\eps':=1-\eps_0$. The hypothesis and \eqref{2.5} for $k=0$ and $\zeta'$ in place of $\zeta$ imply
\[
C:=\sup_{\eps\in(0,1), r\ge 0} \eps \psi(r)^{-1}\psi(r+L_{u,\eps,\eps'}(t_0))<\infty.
\]
Assume that $u_0\not\equiv 0$ because otherwise the result holds trivially.
By the definition of $Y^{h}_y$, for any $y\in\bbR^3$, there is $x\in\bbR^3$ such that 
\[
u(t_0,x)=\psi(Y^{0}_y(t_0))^{-1}\psi(|x-y|) \qquad (=:\eps>0).
\]
Then there is $x'\in B_{L_{u,\eps,\eps'}(t_0)}(x)$ with $u(t_0,x')\ge \eps'$ ($=1-\eps_0$), and we have
\[
\psi(|x'-y|)\le \psi(|x-y|+L_{u,\eps,\eps'}(t_0)) \le C\eps^{-1}\psi(|x-y|) = C\psi(Y^{0}_y(t_0)).
\]
Since $C$ is independent of $y$, this means $\sup_{y\in\bbR^3} (Z_y(t_0)-Y^{0}_y(t_0))<\infty$.  We conclude as in the ignition case, using \eqref{3.9a}.

{\bf Case  $d\le 2$:}  The argument from the first case shows $\sup_{y\in\bbR^d} (Z_y'(t_0)-Y^{0}_y(t_0))<\infty$, with  $Z_y'$ from the ignition case $d\le 2$.  As in the ignition case $d\le 2$, and with the same $\tau$, we obtain $\sup_{y\in\bbR^d} (Z_y(t_0+\tau)-Y^{0}_y(t_0+\tau))<\infty$ and the result follows as before.  

Finally, the first inclusion of the claim about global mean speed follows from the first claim in \eqref{1.7} as in the ignition case because the first inclusion in Lemma \ref{L.2.1a} holds also for monostable $f$.  The second inclusion in  Definition \ref{D.1.1b}, with $c':=c_Y'$, follows from  $\Lambda^0_u(t)\le M$ (which holds for all large enough $t$ by \eqref{3.9}) and \eqref{3.3}.
\end{proof}


\section{Proofs of  Theorems \ref{T.1.2}(i) and \ref{T.1.3}(ii)} \lb{S4a}

As in Section \ref{S3}, all constants  will depend on $f_0,K,\zeta,\eta$ (but not on $\tht$ from (H), unless explicitly noted).

Note that the claim about global mean speed follows in both cases from the first claim in \eqref{1.7} as in the proof of Theorem \ref{T.1.3}(i).  Since bounded width also follows from the first claim in \eqref{1.7}, we are therefore left with proving \eqref{1.7} in both cases.

Let us start with the (easier to prove) analogous results for monostable reactions from Remark 4 after Theorem \ref{T.1.2} and Remark 3 after Theorem \ref{T.1.3}.

\begin{proof}[Proof of Remark 4 after Theorem \ref{T.1.2}]
We can assume without loss that $t_0=0$.
The idea is to construct $w_0$ such that 
\beq \lb{4.2}
\Delta w_0(\cdot) + f(\cdot,w_0(\cdot))\ge 0
\eeq
and the solution $w$ to \eqref{1.1} with $w(0,x)=w_0(x)$  satisfies $w(\tau,\cdot)\ge u_0(\cdot)$ and $u(\tau ,\cdot)\ge w_0(\cdot)$ for some $\tau >0$.  Then  $u$ will satisfy
\beq \lb{4.2a}
w(t-\tau ,\cdot) \le u(t,\cdot) \le w(t+\tau,\cdot)
\eeq
for $t\ge \tau$.  Since $w_t\ge 0$, Theorem \ref{T.3.3}(ii) for $w,u$ in place of $u,v$ will now do the trick.  

Let us first consider \eqref{1.6a}, and  sssume without loss $e=(1,0,\dots,0)$.
Let  $s_0>0$ be such that there is a smooth, even, $2s_0$-periodic $C^2$ function $U:\bbR\to[0,\tfrac 12(1+\tht_0)]$ satisfying $U'' + f_0(U)>0$ on $\bbR$, $U(0)=\tfrac 12(1+\tht_0)$, $U(s_0)=0$, and $U'<0$ on $(0,s_0)$ (then obviously $U'(0)=U'(s_0)=0$).  Such $U$ is obtained by perturbing the solution of $\til U'' + f_0(\til U)=0$ with $\til U(0)=\tfrac 12(1+\tht_0)$ and $\til U'(0)=0$.  The latter satisfies $\til U'<0$ on some interval $(0,\til s_0]$ with $\til U(\til s_0)=0$ because multiplying the ODE by $\til U'$ and integrating yields 
\[
\til U'(s)^2 = \til U'(0)^2 + 2\int_{\til U(s)}^{\til U(0)} f_0(u)du = 2\int_{\til U(s)}^{(1+\tht_0)/2} f_0(u)du >0
\] 
as long as $\til U(s)>0$ (notice that $\til U''(0)<0$).  Thus we can perturb $\til U$ to obtain the desired $U$, with $s_0$ near $\til s_0$. Then $U,s_0$ depend only on $f_0$, and we define
\beq\lb{4.1}
W(s):= 
\begin{cases}
\tfrac 12(1+\tht_0) & s\le R_2,
\\ U(s-R_2) & s\in(R_2,R_2+s_0),
\\ 0 & s\ge R_2+s_0.
\end{cases}
\eeq
Note that  
\[
\inf_{s<R_2+s_0} [W''(s)+f_0(W(s))] = \inf_{s\in\bbR} [U''(s)+f_0(U(s))] >0.
\]  

Then $w_0(x):=W(x_1)$ satisfies \eqref{4.2}, and $w(\tau,\cdot)\ge u_0(\cdot)$ follows for some $(f_0,\eps_2)$-dependent $\tau$, from $\eps_2>0$ and  the second claim in Lemma \ref{L.2.1}.  Similarly, $u(\tau,\cdot)\ge w_0(\cdot)$ follows for some $(f_0,R_2-R_1,\eps_1)$-dependent $\tau$ from  the second claim in Lemma \ref{L.2.1} (which holds with $\tfrac 12(1+\tht_0)$ replaced by $\tht_0+\eps_1$ when $\eps_1>0$, and with $R=R(f_0,\eps_1)$ \cite{AW}).

Thus  $w$  satisfies \eqref{4.2a} for all $t\ge \tau $.  Let us increase $\tau$ so that 
\[
w(\tau,\cdot)\ge (1-\eps_0) \chi_{\{x\,|\,x\cdot e<R_2\}}(\cdot).
\]
This makes $\tau$ also depend on $K$, and then Lemma \ref{L.3.1}(i) applied to $w$ yields 
\beq\lb{4.1cc}
\Lambda_w^0(\tau)\le s_0+c_Y'\tau
\eeq
because $Y_y^0(0)\ge y_1-(R_2+s_0)$ if $Y_y^0$ is defined with respect to $w$.
With $M_\eps,\tau_\eps$ from Theorem~\ref{T.3.3},  let $\ell_\eps:=M_{\eps/2}+3c_Y'\tau $ and 
\beq\lb{4.1b}
T_\eps:=  2\tau  +  \tau_{\eps/2} + \left(\frac{c_0}2 - \sqrt\zeta \right)^{-1} (s_0+c_Y'\tau) .
\eeq
Then Theorem \ref{T.3.3}(ii) with $w,u$ in place of $u,v$ gives $L_{u,\eps}(t)\le\ell_\eps$ for $t\ge T_\eps$. The proof in the case \eqref{1.6a} is finished.

Let us now assume \eqref{1.5a} as well as $x_0=0$ without loss, and first also assume that $\sup u_0<1$.  
We can also assume without loss that (with $U$ as above) 
\beq\lb{4.1a}
R_2\ge \frac{(d-1)\|U'\|_\infty}{\inf_{s\in\bbR} [U''(s)+f_0(U(s))]}.
\eeq
The result now holds for any $R_1\ge R(f_0,\eps_1)$, where the latter is from the argument above so that the conclusion of Lemma \ref{L.2.1} still holds.
Indeed, this time we let $w_0(x):=W(|x|)$, which also satisfies \eqref{4.2} due to \eqref{4.1a}.  As above, we obtain \eqref{4.2a} for some $\tau>0$, and  then again $L_{u,\eps}(t)\le\ell_\eps$ for $t\ge T_\eps$.

Finally,  assume \eqref{1.5a} with $x_0=0$, and $\sup u_0= 1$.  We let again \eqref{4.1a} and $w_0(x):=W(|x|)$, but now $w(\tau,\cdot)\not\ge u_0(\cdot)$ for all $\tau\ge 0$.  Solve \eqref{1.1}, \eqref{1.2} and replace $u_0$ by $u(1,\cdot)$.  It is obviously sufficient to prove this claim for the new $u_0$.  This now satisfies 
\[
u_0(x) \le \min \left\{ 1-\eps_2,\frac{ |B_1(0)| R_2^d}{(4\pi)^{d/2}} e^{K } e^{ -\max\{|x|-R_2,0\}^2/4} \right\},
\]
by the comparison principle, for some $(K,R_2)$-dependent $\eps_2>0$.  Since $w(\tau,\cdot)$ converges locally uniformly to $1$ as $\tau\to\infty$, $w_t\ge 0$, and $w(\tau,x)\sim e^{-|x|^2/4\tau}$ as $|x|\to\infty$ by the heat equation asymptotics, we again obtain $w(\tau,\cdot)\ge u_0(\cdot)$ for some $\tau$.  The rest of the argument is as before.
\end{proof}

\begin{proof}[Proof of Remark 3 after Theorem \ref{T.1.3}]
The first claim in \eqref{1.7} is proved as above, now with $u$ playing the role of $w$ because $u_t\ge 0$.  Indeed, assume $t_0=0$ and let $\tau<\infty$ be such that $s:=\sup_{y\in\bbR^d}(Z_y(\tau)-Y^0_y(\tau))<\infty$ (which exists by the proof of Remark 2 after Theorem \ref{T.1.3}).
With $M_\eps,\tau_\eps,\ell_\eps$ from the previous proof, let $T_\eps$ be from \eqref{4.1b} but with $s_0+c_Y'\tau$ replaced by $s$.  
Then again $L_{v,\eps}(t)\le\ell_\eps$ for $t\ge T_\eps$ by Theorem \ref{T.3.3}(ii), as above.

The  global mean speed claim is proved as in the proof of Remark 2 after Theorem \ref{T.1.3}, this time using $\Lambda^0_v(t)\le \Lambda^0_u(t-\tau)+2c_Y'\tau \le M+ 2c_Y'\tau$ (which again holds for all large $t$).
\end{proof}

The proof of \eqref{1.7} in Theorem \ref{T.1.2}(i) resp.~Theorem \ref{T.1.3}(ii) is similar but a little more involved.  To show that $\ell_\eps$ is independent of $R_1,R_2,\eps_1,\eps_2$ resp.~$\tau$, as well as to obtain the second claim in \eqref{1.7}, we will need to use Theorem \ref{T.1.5}.  In addition, the exponential tails of the initial data in Theorem \ref{T.1.2}(i) will be handled by constructing appropriate super-solutions and obtaining inequalities as in  \eqref{3.9b} instead of \eqref{4.2a}.

We will start with proving the result for general solutions $u$ which (essentially) lie between two time-translates of a solution $w$ with initial datum satisfying \eqref{4.2}.  The bounds in this result will, in fact, be independent of $u,w$ for large $t$ as long as the number $\Lambda^h_{w}(0)$, defined in \eqref{3.9d}, is finite for each small enough $h>0$.  

\begin{theorem} \lb{T.5.1}
Let $d\le 3$, let $f_0,K$ be as in (H), and let $\eta>0$ and $\zeta\in(0,c_0^2/4)$. For any $\eps'\in(0,\tfrac 12)$, there are $\ell_{\eps'},m_{\eps'}\in(0,\infty)$ such that if  $\tht>0$, $\lambda:(0,\tfrac 12)\to(0,\infty)$ is left-continuous and non-increasing, $\tau <\infty$, and $\nu:(0,\infty)\to[0,\infty)$ satisifies $\lim_{t\to\infty}\nu(t)=0$, then there is $T_{\eps',\tht,\lambda,\tau ,\nu}<\infty$
such that  the following holds.  If $f\in F(f_0,K,\tht,\zeta,\eta)$ and $u,w\in[0,1]$ are solutions of \eqref{1.1} on $(0,\infty)\times\bbR^d$ with $w_0(\cdot):=w(0,\cdot)$ satisfying \eqref{4.2}, with 
$\Lambda^h_{w}(0)\le \lambda(h)$ for all $h\in \left( 0,\min\left\{ \tht(c_0^2-4\zeta)(c_0^2+4\zeta)^{-1},\frac\eta{4K},\eps_0 \right\} \right]$, and with 
\beq\lb{5.2}
w(t-\tau ,\cdot)-\nu(t) \le u(t,\cdot) \le w(t+\tau ,\cdot)+\nu(t)
\eeq
for each $t>\tau $, then 
\beq\lb{5.3}
\sup_{t\ge T_{\eps',\tht,\lambda,\tau ,\nu}} L_{u,\eps'}(t)\le \ell_{\eps'} \qquad\text{and}\qquad
\inf_{\substack{t\ge T_{\eps',\tht,\lambda,\tau ,\nu} \\ u(t,x)\in[\eps',1-\eps']}} u_t(t,x) \ge m_{\eps'}.
\eeq
\end{theorem}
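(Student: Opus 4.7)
The plan runs in two stages: first, comparison with $w$ via Theorem \ref{T.3.3}(ii) gives a $\tau$-dependent width bound on $u$ for large $t$; then, a compactness argument extracts entire solutions as locally uniform limits of space-time shifts of $u$, to which the universal bounds of Theorem \ref{T.1.5}(ii) and Corollary \ref{C.3.3a} apply to produce the $\tau$-independent bounds $\ell_{\eps'},m_{\eps'}$. The main obstacle is the $\tau$-divergence of the first stage, which no comparison-principle refinement can remove; the crux is that under (H) with $\tht>0$, \emph{any} entire solution with bounded width is time-monotone (Theorem \ref{T.1.5}(ii)) and admits a universal ($u$-independent) width bound (Corollary \ref{C.3.3a}), and parabolic compactness (Lemma \ref{L.8.1}) transfers this back to $u$ asymptotically in $t$.

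\emph{Step 1 (initial $\tau$-dependent bound).} Since $w_0$ satisfies \eqref{4.2}, $w_t\ge 0$. For $\eps\in(0,\tfrac 12)$ let $h_\eps:=\min\{\tht(c_0^2-4\zeta)(c_0^2+4\zeta)^{-1},\tfrac{\eta}{4K},\eps_0,\tfrac{\eps}{4}\}$, and pick $T$ large enough that $\nu(t)\le\eps/2$ for $t\ge T$ and that the bracketed term in \eqref{3.9c} (finite by $\Lambda^{h_\eps}_w(0)\le\lambda(h_\eps)$) vanishes for $t\ge T$. Then \eqref{5.2} yields the inequality \eqref{3.9b} with $T=\tau$, and Theorem \ref{T.3.3}(ii), applied with $(w,u)$ playing the role of $(u,v)$, gives $L_{u,\eps}(t)\le M_{\eps/2-h_\eps}+3c_Y'\tau$ for all $t\ge T$. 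In particular $u$ has bounded width for every $\eps\in(h_\eps,\tfrac 12)$, albeit with a $\tau$-dependent bound.

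\emph{Step 2 (compactness to an entire solution).} Fix $\eps'\in(0,\tfrac 12)$ and set $\ell_{\eps'}:=M_{\eps'/4}$ (depending only on $\eps',f_0,K,\zeta,\eta$). Assume for contradiction that there exist $t_n\to\infty$ and $y_n\in\bbR^d$ with $u(t_n,y_n)\ge\eps'$ while $u(t_n,y_n+z)<1-\eps'$ for all $|z|\le\ell_{\eps'}+1$. Set $\hat u_n(t,x):=u(t+t_n,x+y_n)$ and $\hat f_n(x,u):=f(x+y_n,u)$. By translation invariance of $F(f_0,K,\tht,\zeta,\eta)$ (Remark 3 after Definition \ref{D.1.1a}), each $\hat f_n$ remains in this set, and Step 1 yields uniform-in-$n$ width bounds on $\hat u_n$ over time intervals exhausting $\bbR$. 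Apply Lemma \ref{L.8.1}(i) to extract a subsequence with $\hat u_{n_j}\to u_\infty$ and $\hat f_{n_j}\to f_\infty\in F(f_0,K,\tht,\zeta,\eta)$ locally uniformly on $\bbR\times\bbR^d$; the limit $u_\infty$ is a bounded entire solution satisfying $\sup_{t\in\bbR}L_{u_\infty,\eps}(t)<\infty$ for every $\eps\in(h_\eps,\tfrac 12)$, and fulfills $u_\infty(0,0)\ge\eps'$ together with $u_\infty(0,z)\le 1-\eps'$ for $|z|\le\ell_{\eps'}+1$, so in particular $u_\infty\not\equiv 0,1$.

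\emph{Step 3 (universal bound and $u_t$ lower bound).} Bounded widths of $u_\infty$ force $\Omega_{u_\infty,1-\eps_0}(t)\ne\emptyset$ at every $t$; combined with Lemma \ref{L.2.1} this yields positive global mean speed for $u_\infty$, so Theorem \ref{T.1.5}(ii) (applicable since $\tht>0$) gives $(u_\infty)_t>0$. With $h:=h_{\eps'}$, Step 1 together with \eqref{3.9f} shows $\Lambda^h_{u_\infty}(t)\le L_{u_\infty,h}(t)<\infty$, verifying the $\limsup$ hypothesis of Corollary \ref{C.3.3a}(i), which then delivers $\sup_{t\in\bbR}L_{u_\infty,\eps'/2}(t)\le M_{\eps'/2-h}\le M_{\eps'/4}=\ell_{\eps'}$. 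But the construction furnishes $0\in\Omega_{u_\infty,\eps'/2}(0)$ with $d(0,\Omega_{u_\infty,1-\eps'/2}(0))\ge\ell_{\eps'}+1$, a contradiction; thus $\limsup_{t\to\infty}L_{u,\eps'}(t)\le\ell_{\eps'}+1$, proving the first claim in \eqref{5.3} (after enlarging $\ell_{\eps'}$ by $1$). The second claim follows by the same template: if $u_t(t_n,x_n)\to 0$ with $u(t_n,x_n)\in[\eps',1-\eps']$ and $t_n\to\infty$, the extracted entire solution $u_\infty$ has $(u_\infty)_t(0,0)=0$ at an interior value, contradicting $(u_\infty)_t>0$.
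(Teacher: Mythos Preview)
Your approach is essentially the same as the paper's: a $\tau$-dependent width bound via Theorem~\ref{T.3.3}(ii), followed by a compactness/contradiction argument through Lemma~\ref{L.8.1}, Theorem~\ref{T.1.5}(ii), and Corollary~\ref{C.3.3a}. The paper sets $\ell_{\eps'}:=M_{\eps'/2}+1$ and $m_{\eps'}:=\tfrac12\mu_{\eps',M_{\eps'/2}}$ (with $\mu$ from \eqref{1.7b}), and invokes Corollary~\ref{C.3.3a}(ii) (hence (i) with $h=0$) rather than (i) with $h=h_{\eps'}$; but these are cosmetic differences.

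One quantifier issue: the theorem asserts that $T_{\eps',\tht,\lambda,\tau,\nu}$ is independent of the particular $(f,u,w)$, whereas your contradiction sequence fixes $(f,u,w)$ and only lets $(t_n,y_n)$ vary. As written, your argument therefore yields a $T$ that may also depend on $(f,u,w)$. The fix is trivial---run the contradiction over sequences $(f_n,u_n,w_n,t_n,y_n)$ satisfying the hypotheses (Step~1 applies uniformly since $L(\eps)$ and $T(\eps)$ depend only on $\tht,\lambda,\tau,\nu$), exactly as the paper does. The same remark applies to the second claim in \eqref{5.3}: your sketch shows the relevant $\liminf$ is positive for each fixed $u$, but to obtain a universal $m_{\eps'}$ you should either vary the full data in the contradiction, or (as the paper does) define $m_{\eps'}$ explicitly via \eqref{3.9g} and then contradict it.
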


{\it Remark.}  We stress that $\ell_{\eps'},m_{\eps'}$ are independent of $f,u$ as well as of $\tht,\lambda,\tau,\nu$.


\begin{proof}
Let  $h_0:= \min\left\{ \tht(c_0^2-4\zeta)(c_0^2+4\zeta)^{-1},\frac\eta{4K},\eps_0 \right\}>0$.  For $\eps\in(0,4h_0]$, and with $M_\delta,\tau_\eps$ from Theorem \ref{T.3.3}, let 
$T(\eps)\ge \tau +\tau_{\eps/2}$ be such that $\sup_{t\ge T(\eps)} \nu(t)\le \tfrac \eps 2$, 
and define
 $L(\eps):=  M_{\eps/4} + 3c_Y'\tau  +\lambda(\tfrac \eps 4)$.  For $\eps\in(4h_0,\tfrac 12)$ let $T(\eps):=T(4h_0)$ and $L(\eps):=L(4h_0)$.

Then for any $\eps\in(0,4h_0]$, by Theorem \ref{T.3.3}(ii) with $h:=\tfrac\eps 4$, 
\beq\lb{5.4a}
L_{u,\eps}(t)\le L(\eps) \qquad\text{for $t\ge T(\eps)$}
\eeq
(and this also holds for $\eps\in(4h_0,\tfrac 12)$ because $L_{u,\eps}(t)$ is non-increasing in $\eps$).

Let us now prove the first claim in \eqref{5.3}, with $\ell_{\eps'} :=M_{\eps'/2}+1$.  If there is no such $T_{\eps',\tht,\lambda,\tau ,\nu}$,  then there is a sequence $(f_n,u_n, w_n,t_n,x_n)$ with $f_n,u_n,w_n$ satisfying the hypotheses of the theorem, $\lim_{n\to\infty} t_n=\infty$, $u_n(t_n,x_n)\in [\eps',1-\eps']$ and 
\beq\lb{5.5}
 \inf_{u_n(t_n,y)\ge 1-\eps'} |y-x_n| > \ell_{\eps'}.
\eeq
After shifting $f_n$ by $(-x_n,0)$ and  $u_n$ by $(-t_n,-x_n)$, and then applying Lemma \ref{L.8.1}(ii) (with $-t_n+T(\eps)$ in place of $t_n(\eps)$, using \eqref{5.4a}),
we obtain new $(f,u)\in S_{-\infty,L}(f_0,K,\tht)$ such that $u(0,0)\in [\eps',1-\eps']$ and $L_{u,\eps'/2}(0)\ge \ell_{\eps'}>M_{\eps'/2}$.  We also have $f\in F(f_0,K,\tht,\zeta,\eta)$ because that set is closed under locally uniform limits. 

Thus $(f,u)\in S_L(f_0,K,\tht)$ since $u\not\equiv 0,1$.  Then
Theorem \ref{T.1.5}(ii)  shows  $u_t\ge 0$, because bounded width of $u$ and Lemma \ref{L.2.1} immediately show that $u$ propagates with a positive global mean speed.  But then $L_{u,\eps'/2}(0)>M_{\eps'/2}$ yields a contradiction with Corollary \ref{C.3.3a}(ii,i) (with $h=0$).  The first claim in \eqref{5.3} is proved.


The second claim is proved similarly with $m_{\eps'}:=\tfrac 12\mu_{\eps',M_{\eps'/2}}$ for $\eps\in(0,2\eps_0]$, where $\mu_{\eps,\ell}>0$ is the $\inf$ in \eqref{1.7b} (then it also holds with $m_{\eps'}:=m_{2\eps_0}$ for $\eps'\in(2\eps_0,\tfrac 12)$).
Non-existence of $T_{\eps',\tht,\lambda,\tau ,\nu}$ again yields a sequence $(f_n,u_n,w_n,t_n,x_n)$ with $f_n,u_n,w_n$ satisfying the hypotheses of the theorem, $\lim_{n\to\infty} t_n=\infty$, $u_n(t_n,x_n)\in [\eps',1-\eps']$ and $(u_n)_t(t_n,x_n)<m_{\eps'}$.   We again obtain new $(f,u)\in S_L(f_0,K,\tht)$ such that $f\in F(f_0,K,\tht,\zeta,\eta)$, 
$u_t\ge 0$, as well as $u(0,0)\in [\eps',1-\eps']$, and  $u_t(0,0)\le m_{\eps'}<\mu_{\eps',M_{\eps'/2}}$.  This contradicts Corollary \ref{C.3.3a}(ii,i) (with $h=0$), and the second claim in \eqref{5.3} is also proved.
\end{proof}

Recall that in the proof of  Theorem \ref{T.1.3}(i) we obtained $\Lambda^h_u(t_0+T)\le c_Y'T+L_{u,h,\eps'}(t_0)$ for all $h\in \left( 0,\min\left\{ \tht(c_0^2-4\zeta)(c_0^2+4\zeta)^{-1},\frac\eta{4K},\eps_0 \right\} \right]$, with $T=0$ if $d=3$ and some $T>0$ if $d\le 2$.  If we thus let $\lambda(h):=c_Y'T+\inf_{h'\in(0,h)}L_{u,h',\eps'}(t_0)$ (which is left-continuous) and $\nu\equiv 0$, then \eqref{1.7} in Theorem \ref{T.1.3}(ii)  immediately follows from Theorem \ref{T.5.1} with $u,v$ in place of $w,u$ and time shifted by $-(t_0+T)$.

Hence we are left with proving \eqref{1.7} in Theorem \ref{T.1.2}(i).  As in the proof of Remark 4 after Theorem \ref{T.1.2}, we will start with assuming \eqref{1.6}, and also without loss that $t_0=0$, $e=(1,0,\dots,0)$, as well as $\eps_2\le c_0/4$
(recall that $u\in[0, 1]$).  
We again let $w$ solve \eqref{1.1} with $w(0,x)=W(x_1)$, where  $W$ is from \eqref{4.1}.
As before, $w_t\ge 0$ and we have $u(\tau,\cdot) \ge w(0 ,\cdot)$  provided $\tau $ is large enough (depending on $f_0,R_2-R_1,\eps_1$).  This yields the first inequality in \eqref{5.2}, with $\nu\equiv 0$.  


To obtain the second inequality in \eqref{5.2}, we define $\beta(t):= \tau -e^{-\eps_2^2 t}$ and 
\beq \lb{5.9b}
v(t,x):=w(t+\beta(t),x)+e^{\eps_2^2 t - \eps_2(x_1-R_2)}
\eeq
for some large $\tau $ to be determined later.  We then have for $t>0$,
\beq \lb{5.6}
v_t-\Delta v-f(x,v) = f(x,w(t+\beta(t),x))-f(x,v) + \eps_2^2 e^{-\eps_2^2 t} w_t(t+\beta(t),x),
\eeq
where we extend $f$ so that $f(x,u)\le 0$ for $u\ge 1$ (cf.~\eqref{1.24}).  

We want to show that $v$ is a super-solution of \eqref{1.1}, that is, the right hand side of \eqref{5.6} is $\ge 0$ for $t>0$ and $x\in\bbR^d$.  When $w(t+\beta(t),x)\ge 1-\tht$, then $f(x,w(t+\beta(t),x))\ge f(x,v(t,x))$ by the hypotheses on $f$ and $w\le 1$, so this is indeed the case.

Let $\ell_{\tht/4},m_{\tht/2}$ be  from Theorem \ref{T.5.1} (i.e., with $\eps':=\tfrac\tht 4$ and $\eps':=\tfrac\tht 2$).  We now let $\tau $ be large enough so that $w(t+\tau -1,x)\ge 1-\tht$ whenever $t\ge 0$ and 
\beq \lb{5.7}
x_1\le \frac {c_0}2 t + \frac 1{\eps_2} \log \max \left\{ \frac K {\eps_2^2 m_{\tht/2}}, \frac 2\tht \right\} +R_2,
\eeq
and also that
\beq\lb{5.7a}
\sup_{t\ge 0} L_{w,\tht/4}(t+\tau-1)\le\ell_{\tht/4} \qquad\text{and}\qquad \inf_{\substack{t\ge 0 \\ w(t+\tau -1,x)\in[\tht/2,1-\tht/2]}} w_t(t+\tau -1,x) \ge m_{\tht/2}.
\eeq
The former holds for all large $\tau $  due to the second claim in Lemma \ref{L.2.1}.  The latter holds for all large $\tau $  due to Theorem \ref{T.5.1} applied to $u=w$, $\nu\equiv 0$, and $\tau =0$, but starting from some positive time for which  $\Lambda^0_w$ ($\ge \Lambda^h_w$ for all $h> 0$) is finite (see \eqref{4.1cc}), instead from time 0.
This $\tau$ then only depends on $f_0,K,\zeta,\eta,\eps_2,\tht$.

When $w(t+\beta(t),x)< 1-\tht$, then $w(t+\tau-1,x)< 1-\tht$ by $w_t\ge 0$, so 
\[
e^{\eps_2^2 t - \eps_2(x_1-R_2)} \le \min \left\{ \frac {\eps_2^2 m_{\tht/2}}K, \frac \tht 2 \right\} e^{(\eps_2^2 -c_0\eps_2/2) t} \le \min \left\{ \frac {\eps_2^2 m_{\tht/2}}K e^{(\eps_2^2 -c_0\eps_2/2) t}, \frac\tht 2 \right\}
\]
by the opposite inequality to \eqref{5.7}.  So either $w(t+\beta(t),x)\le \tfrac\tht 2$, in which case $v(t,x)\le\tht$ and we have $f(x,w(t+\beta(t),x))= f(x,v(t,x))=0$; or $w(t+\beta(t),x)\in (\tfrac\tht 2,1-\tht)$, in which case the right hand side of \eqref{5.6} can be bounded below by 
\[
- Ke^{\eps_2^2 t - \eps_2(x_1-R_2)}  + \eps_2^2 e^{-\eps_2^2 t} m_{\tht/2} \ge  -  \eps_2^2 m_{\tht/2} e^{(\eps_2^2 -c_0\eps_2/2) t} +  \eps_2^2 m_{\tht/2} e^{-\eps_2^2 t}  \ge 0
\]
(using $\eps_2\le c_0/4$ in the last inequality).

It follows that $v$ is a super-solution of \eqref{1.1}, with $v(0,\cdot)\ge u(0,\cdot)$ due to \eqref{1.6}.  Hence $v\ge u$, and the second inequality in \eqref{5.2} holds with 
\[
\nu(t):= \max \left\{ \sup_{x_1\le R_2+c_0t/2} [1-w(t+\tau -1, x)],\, e^{(\eps_2^2 -c_0\eps_2/2) t} \right\}
\]
because $u\le 1$ and $w_t\ge 0$ (notice that $\nu$ depends only on $\tau,f_0,\eps_2$).    Since $\lim_{t\to\infty}\nu(t)=0$ due to Lemma \ref{L.2.1} and $0<\eps_2<c_0/2$, Theorem \ref{T.1.2}(i) for \eqref{1.6} follows from Theorem \ref{T.5.1}.

The proof of \eqref{1.7} in the \eqref{1.5} case  of  Theorem \ref{T.1.2}(i) is similar, with $x_1$ replaced by $|x-x_0|$ in the whole argument,
and $\eps_2(d-1)|x-x_0|^{-1}e^{\eps_2^2 t - \eps_2(|x-x_0|-R_2)}$ added to the right hand side of \eqref{5.6}.
\smallskip

{\it Remark.}  For later reference, in the proof of Theorem \ref{T.1.12} below, we also construct a sub-solution of \eqref{1.1} with the same flavor.  Let $w$ be as in the above proof, solving \eqref{1.1} with $w(0,x):=W(x_1)$.  
We have  $\inf_{w(0,x)\ge \tht/4} w_t(0,x)>0$ by the construction of $W$ (because $U''+f_0(U)>0$), uniformly in all $f\ge f_0$.
It follows from this and parabolic regularity that on some short time interval $[0,\til t]$, $w_t$ is bounded away from zero uniformly in all $(t,x)$ with $w(t,x)\ge\tfrac\tht 2$ and in all $f\ge f_0$  with $f(x,0)\equiv 0$ and  Lipschitz constant $K$.  
This, $w_t\ge 0$, and the first claim in \eqref{5.7a}  now yield
\beq\lb{5.8a}
m:=\inf \left\{ w_t(t,x) \,\bigg|\, f\in F(f_0,K,\tht,\zeta,\eta), \, t\ge 0, \text{ and } w(t,x)\in \left[ \frac\tht 2, 1- \frac\tht 2 \right]  \right\}>0,
\eeq
provided we also assume (without loss) that $\tht\le 4\eps_0$.  This is because an argument as in Lemma \ref{L.8.1}(iii) shows that otherwise there would be some $f\in F(f_0,K,\tht,\zeta,\eta)$ and a solution $u$ of \eqref{1.1} on $(-\til t,\infty)\times\bbR^d$ with $\sup_{t\ge \tau} L_{u,\tht/4}(t)\le\ell_{\tht/4}$ for $\tau$ from \eqref{5.7a}, $u(0,0)\in[\tfrac\tht 2, 1-\tfrac\tht 2]$, and $u_t\equiv 0$.  Then Lemma \ref{L.2.1} and $\tfrac\tht 4\le\eps_0$ show $\lim_{t\to\infty}u(t,0)= 1$, contradicting $u_t\equiv 0$.

Next, pick any $r\in\bbR$ and define
\beq \lb{5.9a}
v(t,x):=w(t-1+e^{-\eps_2^2 t},x)-e^{\eps_2^2 t - \eps_2(x_1-r)},
\eeq
so that for $t>0$,
\beq \lb{5.9}
v_t-\Delta v-f(x,v) = f(x,w(t-1+e^{-\eps_2^2 t},x))-f(x,v) - \eps_2^2 e^{-\eps_2^2 t} w_t(t+\beta(t),x).
\eeq
Here we extend $f$ so that $f(x,u)\ge 0$ for $u\le 0$ (cf.~\eqref{1.24}). We would like to show that the right hand side of \eqref{5.9} is $\le 0$.  

This is obviously true when $v(t,x)\ge 1-\tht$ because then the hypotheses on $f$ and $w\le 1$ show $f(x,w(t-1+e^{-\eps_2^2 t},x))\le f(x,v(t,x))$.  

Now consider $(t,x)\in(0,\infty)\times\bbR^d$ with
\beq \lb{5.8}
x_1\ge \frac {c_0}2 t + \frac 1{\eps_2} \log \max \left\{ \frac K {\eps_2^2 m}, \frac 2\tht \right\}+r.
\eeq
Then
\[
e^{\eps_2^2 t - \eps_2(x_1-r)} \le \min \left\{ \frac {\eps_2^2 m}K, \frac \tht 2 \right\} e^{(\eps_2^2 -c_0\eps_2/2) t} \le \min \left\{ \frac {\eps_2^2 m}K e^{(\eps_2^2 -c_0\eps_2/2) t}, \frac\tht 2 \right\}.
\]
This, $w\in[0,1]$, and the hypotheses on $f$ show that if $w(t-1+e^{-\eps_2^2 t},x)\notin (\tht,1-\tfrac\tht 2)$, then we have $f(x,w(t-1+e^{-\eps_2^2 t},x))\le f(x,v(t,x))$, so the right hand side of \eqref{5.9} is indeed $\le 0$.  If instead $w(t-1+e^{-\eps_2^2 t},x)\in (\tht,1-\tfrac\tht 2)$, then we conclude the same because the right hand side can be bounded above by
\[
Ke^{\eps_2^2 t - \eps_2(x_1-r)} -\eps_2^2 e^{-\eps_2^2 t} m \le \eps_2^2 m e^{(\eps_2^2 -c_0\eps_2/2) t} -\eps_2^2 m e^{-\eps_2^2 t} \le 0.
\]

We cannot, however, conclude this when the opposite of \eqref{5.8} holds and $v(t,x)< 1-\tht$.  Thus we have obtained that $v$ is a sub-solution of \eqref{1.1} on the  set of $(t,x)\in(0,\infty)\times\bbR^d$ such that either \eqref{5.8} holds or $v(t,x)\ge 1-\tht$.  This will turn out to be sufficient for our purposes because typical solutions $u$ spread with speed $>c_0/2$.  Hence for appropriate $u$ we will have $u(t,x)\ge 1-\tht$ when the opposite of \eqref{5.8} holds, and we will still be able to conclude $u\ge v$.

\section{Proof of Lemma \ref{L.3.2} in the case $d=3$ (case $u^+\equiv 1$)} \lb{S5}


Recall the setup from the beginning of Section \ref{S3}.  In particular, all constants depend on $f_0,K,\zeta,\eta$ (but not on $\tht,h$ unless explicitly noted). Let us also assume, without loss, that $t_1=0$ and $y=0$, and denote $Y^{h}_0=Y$, $Z_0=Z$.   Thus \eqref{3.7} becomes $Z(\tau)\le Y(0)$.  Finally, recall that $\alpha_f(x)=\alpha_f(x;\zeta)$ and $\psi$ corresponds to $\zeta'$ in Lemma \ref{L.2.3}.

Let $\kappa\in(0,\tfrac12)$ be such that if $u(t,\til x)\in [\alpha_f(\til x),1-\eps_0]$ for some $(t,\til x)\in [\tfrac 12,\infty)\times\bbR^3$, then 
\beq \lb{3.10}
u(t,x)\ge \frac\eta{2K} \qquad \text{and}\qquad f(x,u(t,x)) \ge \kappa \qquad \text{for any $x\in B_{\sqrt 3 \kappa}(\til x)$.}
\eeq
Note that $\kappa$ exists and is independent of $f,u$ due to $\inf_{x\in\bbR^3}\alpha_f(x)\ge \eta/K$, parabolic regularity, and $f\in F(f_0,K,0,\zeta,\eta)$.  Let also $Q\ge K $ be such that if $\calC:=[0,\kappa)^3$ and $\til w\ge 0$ solves 
\[
\til w_t = \Delta \til w + \left[\zeta' + Q \chi_{[1/2,1]}(t) \chi_{\calC}(x) \right] \til w
\]
on $(\tfrac 12,\infty)\times\bbR^3$ with $\til w(\tfrac 12, \cdot)\ge \tfrac\eta{4K}\chi_{\calC}(\cdot)$, then $\til w(t,\cdot)\ge \chi_{\calC}(\cdot)$ for any $t\ge 1$ (which exists because $\zeta'>0$).  

Assume now that $v\in[0,1]$ solves \eqref{2.2}, and that $\calC_1',\calC_2',\dots$ are all (finitely or infinitely many) disjoint cubes  such that $\calC_n'$ is a $\kappa\bbZ^d$-translation of $\calC$ (i.e., by an integer multiple of $\kappa$ in each coordinate) and $v(x_n')\in( \alpha_f(x_n'), 1-\eps_0]$ for some $x_n'\in \bar \calC_n'$.  Since \eqref{3.10} applies to $v$ in place of $u(t,\cdot)$, its  second claim and  \eqref{2.3} show for each $x_0\in\bbR^d$,
\beq \lb{3.13b}
 \sum_{n\ge 1} (1+|x_n'-x_0|)^{-1} \le \kappa^{-4}.
\eeq

Let $T=T_Y>0$, $R\ge T$, and $\tau=\tau_Y\ge T+1$, all to be chosen later (but independent of $\tht,h,f,u$).  Also let $\calC_1,\dots,\calC_N$ be as above but such that $u(T,x)> \alpha_f(x;\zeta')$ for some $x\in \calC_n\cap B_{Y(0)}(0)$.  Let $t_n\in[0,T)$ be the last time such that $u(t,x)\le \alpha_f(x;\zeta')$ for all $(t,x)\in [0,t_n]\times [\calC_n\cap B_{Y(0)}(0)]$, let $I_n:=[t_n,t_n+1]$, and let $x_n\in \calC_n\cap B_{Y(0)}(0)$ be any point such that  $u(T,x_n)\ge \alpha_f(x_n;\zeta')$ ($(t_n,x_n)$ will be fixed from now on).
Then $u_t\ge 0$ and $Z(\tau)\le Y(0)$ show that 
\beq \lb{3.11a}
u(t,x_n)\in [\alpha_f(x_n;\zeta'), 1-\eps_0] \qquad\text{for $n=1,\dots,N$ and  $t\in[T,\tau]$.}
\eeq

We now claim that if $\tau$ is large enough (depending only on $T,R$ in addition to $f_0,K,\zeta,\eta$), then we must have
\beq \lb{3.13}
 \sum_{|x_n-x_0|\le 2R+2} (1+|x_n-x_0|)^{-1} < 2 \kappa^{-4}
\eeq
for each $x_0\in\bbR^3$.
 This holds due to the same argument as in the case $d\le 2$ of this lemma.  Indeed, if such $\tau$ did not exist, we take a sequence of counter-examples $(f_\tau,u_\tau,x_0^\tau)$ to \eqref{3.13} for $\tau=T+1,T+2,\dots$ and shift each in space by (the negative of) the vector whose each coordinate is the largest multiple of $\kappa$ smaller than the same coordinate of $x_0^\tau$. Parabolic regularity then shows that there is a subsequence  along which these shifted solutions converge locally uniformly to a solution of \eqref{1.1} (with some $f\in F(f_0,K,0,\zeta,\eta)$), whose $t\to\infty$ limit $v\in[0,1]$ satisfies \eqref{2.2}.  Moreover, by taking a further subsequence (along which those shifted $\calC_1^\tau,\dots,\calC_{N_\tau}^\tau$ for which $|x_n^\tau-x_0^\tau|\le 2R+2$ are all the same,  and the corresponding shifted $x_n^\tau$ and as well as the shifted $x_0^\tau$ converge), one obtains existence of $x_0'\in\bar\calC$ and of $\kappa\bbZ^d$-translations $\calC_1',\dots,\calC_{N'}'\subseteq B_{2R+4}(0)$ of $\calC$ and $x_n'\in \bar \calC_n'\cap B_{2R+3}(0)$, such that $v(x_n')\in (\alpha_f(x_n'), 1-\eps_0]$ (because \eqref{3.11a} holds for each $(u_\tau,f_\tau,x_n^\tau)$, and $f_\tau(x_n^\tau,\alpha_{f_\tau}(x_n^\tau; \zeta'))=\zeta'\alpha_{f_\tau}(x_n^\tau; \zeta') \ge \zeta \alpha_{f_\tau}(x_n^\tau; \zeta')+(\zeta'-\zeta)\tfrac \eta K$) and 
 \[
  \sum_{n=1}^{N'} (1+|x_n'-x_0'|)^{-1} \ge 2\kappa^{-4}.
 \]
This obviously contradicts \eqref{3.13b}, so there must be $\tau\ge T+1$ such that \eqref{3.13} holds.

We now reorder the $(\calC_n,t_n,x_n)$ so that $t_1\le \dots\le t_N$.  Define
\[
A(t,x) := Q \sum_{n=1}^N \chi_{I_n}(t) \chi_{\calC_n}(x)
\]
and let $w$ solve
\beq\lb{3.13a}
w_t=\Delta w + [\zeta' + A(t,x)](w-h)
\eeq
on $(0,\infty)\times\bbR^3$, with $w(0,x)= h+\psi(Y(0))^{-1}\psi(|x|)$ (so that $w(0,\cdot)\ge u(0,\cdot)$ by the definition of $Y(0)$).  
 We will now show that $w\ge u$ on $[0,T]\times\bbR^3$.

Since the time-independent function  $h+\psi(Y(0))^{-1}\psi(|x|)$ is a sub-solution of \eqref{3.13a}, we have $w(t,x)\ge 1 \ge u(t,x)$ for $(t,x)\in [0,T]\times (\bbR^3\setminus B_{Y(0)}(0))$. 
Also, $w\ge h$ and  \eqref{3.0b} show 
\[
[\zeta'+A(t,x)](w(t,x)-h)  \ge f(x,w(t,x))
\]
for $(t,x)\in [0,T]\times (B_{Y(0)}(0)\setminus \bigcup_{n=1}^N \calC_n)$, as well as for any $n$ and any $(t,x)\in [0,t_n]\times \calC_n$.  The same is true for $(t,x)\in I_n\times \calC_n$  because $f\in F(f_0,K,\tht,\zeta,\eta)$, $h\le\tht$, and  $Q\ge K$.  Since $w(0,\cdot)\ge u(0,\cdot)$ and $u\le 1$  solves \eqref{1.1},  the comparison principle yields $w\ge u$ on $[0,t_1+1]\times\bbR^3$.  

From $Z(\tau)\le Y(0)$, the definition of $t_1\in [0,\tau-1]$, and  the first claim in \eqref{3.10} we have $u(t_1+\tfrac 12,\cdot)\ge \frac\eta{2K}\chi_{\calC_1}(\cdot)$.  Since $w(t_1+\tfrac 12,\cdot)\ge u(t_1+\tfrac 12,\cdot)$ and $h\le\tfrac\eta{4K}$, the function $\til w(t,x):=w(t-t_1,x)-h$ satisfies $\til w(\tfrac 12,\cdot)\ge \frac\eta{4K}\chi_{\calC_1}(\cdot)$. Our choice of $Q$ then shows $w(t,x)\ge 1$ ($\ge u(t,x)$) for $(t,x)\in [t_1+1,T]\times \calC_1$, so  the comparison principle now yields $w\ge u$ on $[0,t_2+1]\times\bbR^3$. 

Using the argument from the previous paragraph $n-1$ more times (with $t_2,\dots,t_n$ in place of $t_1$) ultimately indeed gives $w\ge u$ on $[0,T]\times\bbR^3$.  It therefore suffices to show
\beq \lb{3.14}
w(T,\cdot) -h\le \psi(Y(0)-c_YT)^{-1}\psi(|\cdot|)
\eeq 
to conclude the proof.  This will be achieved by using \eqref{3.13}, for appropriately chosen $T,R$.

Let 
\[
a(t,x):= e^{-2\zeta' t}\psi(Y(0))\psi(|x|)^{-1} (w(t,x)-h),
\]
 so that we have $a(0,x)\equiv 1$ and 
\[
a_t = \Delta a + \frac {2 x\psi'(|x|)}{|x|\psi(|x|)} \cdot \nabla a + A(t,x) a.
\]
Thus \eqref{3.14} will follow if we prove
\beq \lb{3.15}
\|a(T,\cdot)\|_{L^\infty(\bbR^3)} \le \frac{e^{-2\zeta' T}\psi(Y(0))} {\psi(Y(0)-c_YT)}
\eeq
Let 
\[
\delta:= 2\zeta'\frac {c_Y-2\sqrt{\zeta'}}{c_Y+2\sqrt{\zeta'}}>0.
\]
Since $\tfrac d{dr}[\ln\psi(r)]\ge 4\zeta'(c_Y+2\sqrt{\zeta'})^{-1}=(2\zeta'+\delta)c_Y^{-1}$ for $r\ge r_Y$ and we assume $Y(0)-c_YT\ge r_Y$, it follows that $\psi(Y(0))\psi(Y(0)-c_YT)^{-1} \ge e^{(2\zeta'+\delta)T}$.  Hence it suffices to prove
\beq \lb{3.16}
\|a(T,\cdot)\|_{L^\infty(\bbR^3)} \le e^{\delta T}.
\eeq

We now choose $R\ge T$ to be such that if $B_t$ is the standard Brownian motion in $\bbR^3$ (defined on some probability space $(\Omega,\calF,\bbP)$), then
\beq \lb{3.17}
\bbP \left( \sup_{t\in[0,T]} |B_t| \ge R-2\|\psi'\psi^{-1}\|_\infty T \right) \le \frac 13 e^{-QT}
\eeq
(so $R$ depends on $T$ in addition to $f_0,K,\zeta,\eta$).  For any $|x|\le Y(0)$, we let $X^x_t$ be the stochastic process given by $X^x_0=x$ and
\[
dX^x_t = b(X^x_t)dt + dB_t :=  \frac {2 X^x_t\psi'(|X^x_t|)}{|X^x_t|\psi(|X^x_t|)} dt +  dB_t.
\]
Then the well-known Feynman-Kac formula gives  
\beq \lb{3.18}
a(T,x)= \bbE \left(  e^{\int_0^T A(T-t,X^x_t) dt} \right)
\eeq

We will now show that this is $\le e^{\delta T}$ for $x=0$ (the general case is identical).  Denote $X^0_t=X_t$ and note that $|b|\le 2\|\psi'\psi^{-1}\|_\infty$ yields
\beq \lb{3.19}
\bbP \left( \sup_{t\in[0,T]} |X_t| \ge R \right) \le   \bbP \left( \sup_{t\in[0,T]} |B_t| \ge R-2\|\psi'\psi^{-1}\|_\infty T \right) \le \frac 13 e^{-QT}.
\eeq
Since $A\le Q$, this means that the contribution to \eqref{3.18} from those paths which leave $B_{R}(x=0)$ before time $T$ is at most $\tfrac 13 e^{-QT} e^{QT}=\tfrac 13$.

Next reorder again the $(\calC_n,t_n,x_n)$, now so that $\calC_n\cap B_R(0)\neq \emptyset$ precisely when $n\le N'$ (for some $N'$).   Since \eqref{3.13} holds for any $x_0\in \bbR^3$, we have 
\beq \lb{3.20}
 \sum_{n=1}^{N'} (1+|x_n-x_0|)^{-1} < 2 \kappa^{-4}
\eeq
for any $x_0\in B_{R+1}(0)$.  Then $x_n\in B_{R+1}(0)$ implies that \eqref{3.20} holds for any $x_0\in\bbR^3$.

Consider now the paths which stay in $B_{R}(0)$ until time $T$.  These have non-zero $A(T-t,X_t)$ only at those times $t\in[0,T]$ for which $X_t\in \calC_n$ for some $n\le N'$, and also $T-t\in I_n$.  Since  $|I_n|=1$,  the contribution to \eqref{3.18} from those of these paths which hit fewer than $\delta (2Q)^{-1}T$ of the cubes $\calC_1,\dots,\calC_{N'}$ before time $T$ is at most $\exp(Q\delta (2Q)^{-1} T)\le \tfrac 13 e^{\delta T}$, provided we choose $T\ge \delta^{-1}\ln 9$.

Finally, the contribution to \eqref{3.18} from those paths which stay in $B_{R}(0)$ until time $T$ and hit at least $\delta (2Q)^{-1}T$ of the cubes $\calC_1,\dots,\calC_{N'}$ before time $T$ is at most $e^{-2QT}e^{QT}\le \tfrac 13$ by $A\le Q$ and Lemma \ref{L.3.4} below, provided we let $p:=\delta (2Q)^{-1}$, $P:=2 \max \{\kappa^{-4},Q,  \|\psi'/\psi\|_\infty\}$, and choose $T$ large enough (which then depends on $f_0,K,\zeta,\eta$).

Thus $a(T,x=0)\le e^{\delta T}$, and the general case $|x|\le Y(0)$ is identical.  Hence  \eqref{3.16} follows for any such $T$ and the proof will be finished once we prove the following lemma.

\begin{lemma} \lb{L.3.4}
If $p,P>0$, then for any large enough $T>0$ (depending only on $d,p,P$) the following holds.  If $N\le\infty$, the points $x_n\in\bbR^d$  satisfy 
\beq \lb{3.21}
 \sum_{n=1}^{N} (1+|x_n-x|)^{-1} \le P
\eeq
for any $x\in\bbR^d$,
and the process $X_t$ satisfies $X_0=0$ and $dX_t=b(X_t)dt+dB_t$ with $\|b\|_\infty\le P$ and $B_t$ the standard Brownian motion in $\bbR^d$, then 
\beq \lb{3.22}
\bbP \left( X_t \text{ hits at least $pT$ of the balls $B_1(x_n)$ before time $T$} \right) \le  e^{-PT}.
\eeq
\end{lemma}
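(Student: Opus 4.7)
The strategy is to show that $\{N_T\ge pT\}$ forces the process $X_t$ to reach a large radius, which is exponentially unlikely by the Gaussian tail of Brownian motion with bounded drift; the sparsity hypothesis on $\{x_n\}$ provides the crucial link between the radius reached and the number of balls that can be hit.

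Specifically, applying the sparsity $\sum_n(1+|x_n-x|)^{-1}\le P$ at $x=0$, every $n$ with $|x_n|\le R$ contributes at least $(1+R)^{-1}$ to the sum, so $\#\{n:|x_n|\le R\}\le P(1+R)$.  Consequently, on the confinement event $A_R:=\{\sup_{t\in[0,T]}|X_t|\le R\}$ the process $X_t$ can enter only balls $B_1(x_n)$ with $|x_n|\le R+1$, giving $N_T\le P(2+R)$ on $A_R$.  Choosing $R:=pT/P-2$ thus reduces matters to bounding $\bbP(A_R^c)$.  Writing $X_t-X_0=\int_0^t b(X_s)\,ds+B_t$ with $|\int_0^t b|\le PT$, and applying the one-dimensional reflection principle coordinate-wise, I would obtain
\[
\bbP(A_R^c)\le \bbP\Bigl(\sup_{t\le T}|B_t|>R-PT\Bigr)\le 2d\,\exp\Bigl(-(R-PT)^2/(2dT)\Bigr)
\]
whenever $R>PT$; substituting $R=pT/P-2$ yields $\bbP(N_T\ge pT)\le 2d\exp\bigl(-T(p/P-P)^2/(2d)+O(1)\bigr)$ once $p>P^2$, which is $\le e^{-PT}$ for all $T\ge T_0(d,p,P)$ large enough provided the rate $(p/P-P)^2/(2d)$ exceeds $P$.

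The delicate case is when $p$ is small relative to $P$ (specifically $p\lesssim P^{3/2}$), where the confinement argument is ineffective because the drift alone can carry $X_t$ past the critical radius $pT/P$ with non-negligible probability.  In that regime I would supplement the above with potential-theoretic hitting-probability estimates: in $d=3$, the case relevant for Lemma~\ref{L.3.2}, the Green's function of drift-Brownian motion yields $\bbP_x(\text{hit }B_1(y))\le C(1+|x-y|)^{-1}$, and combining with sparsity gives the uniform bound $\sup_x\sum_n\bbP_x(\text{hit }B_1(x_n))\le C'P$.  Iterating via the strong Markov property at the first hitting times of successive new balls then produces the factorial moment bounds $\bbE[N_\infty(N_\infty-1)\cdots(N_\infty-k+1)]\le k!(C'P)^k$, and a Stirling-type optimization over $k$ yields $\bbP(N_T\ge pT)\le \exp(-pT/(eC'P))$ up to polynomial prefactors.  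The principal obstacle is matching the decay rate $P$ demanded by the lemma against the rates produced by each of these estimates over the full parameter range $(p,P)\in(0,\infty)^2$; the resolution lies in choosing $T_0(d,p,P)$ large enough that, in whichever regime we are in, the polynomial and constant prefactors get absorbed into the exponential.
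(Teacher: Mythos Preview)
Your confinement argument (regime~1) is correct but only yields the rate $(p/P-P)^2/(2d)$, which exceeds $P$ only when roughly $p>P^2+P^{3/2}\sqrt{2d}$.  Your factorial-moment argument (regime~2) gives rate $p/(eC'P)$, which again exceeds $P$ only when $p\gtrsim P^2$.  So \emph{both} regimes fail precisely when $p$ is small relative to $P^2$, and this is the situation in the paper's application (there $p=\delta/(2Q)$ and $P=2\max\{\kappa^{-4},Q,\|\psi'/\psi\|_\infty\}$, so $p\ll P^2$).  Your final sentence suggests absorbing prefactors by taking $T_0$ large, but the obstruction is the exponential \emph{rate}, not prefactors: no choice of $T_0$ will turn $e^{-cT}$ into $e^{-PT}$ when $c<P$.  (There is also a secondary issue in regime~2: with adversarial bounded drift $b$ pointing radially inward, drift-Brownian motion in $\bbR^3$ can be recurrent, so the infinite-horizon hitting-probability bound $\bbP_x(\text{hit }B_1(y))\le C(1+|x-y|)^{-1}$ can fail, and $N_\infty=\infty$ almost surely.)

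The paper's resolution is qualitatively different and worth knowing.  Rather than using the sparsity bound \eqref{3.21} at a single point, one integrates it along the polygonal path through the successive hitting points $X_{t_0},X_{t_1},\dots,X_{t_{\lceil pT\rceil}}$.  If $H$ denotes the length of this path and $h_j$ the arc-length position of $X_{t_j}$, then at the point at arc-length $q$ one has $\sum_j(2+|q-h_j|)^{-1}\le P$ (triangle inequality along the path), and integrating over $q\in[0,H]$ yields $\lceil pT\rceil\ln\tfrac{2+H}{2}\le PH$.  This forces $H\ge\gamma(T)T$ with $\gamma(T)=\tfrac{p}{P}\ln T\to\infty$: the path length is \emph{super-linear} in $T$.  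Translating to Brownian displacements (via $|b|\le P$) and then to a single coordinate of a standard Brownian motion via reflections at the stopping times $t_j$, one obtains a Gaussian tail that beats $e^{-CT}$ for \emph{any} fixed $C$ once $T$ is large enough --- in particular for $C=P$, regardless of how small $p$ is.  The logarithmic factor from integrating along the path is exactly what your linear confinement bound $N_T\le P(R+2)$ misses.
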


{\it Remark.}  The point here is that if $X_t$ hits at least $pT$ of these balls,  the sum of the $\lceil pT \rceil$ displacements it undergoes in-between hits will be bounded below by a quantity super-linear in $T$ because of \eqref{3.21}.  The same will then hold for $B_t$ because $b$ is bounded, but the probability of this decreases super-exponentially in $T$ due to the nature of the Gaussian.

\begin{proof}
Define the stopping times $t_0:=0$,
\[
t_j' := \inf \{ t\ge 0 \,|\, X_s \text{ hits at least $j$ of the balls $B_1(x_n)$ before time $t$}  \},
\]
and $t_j:=\min\{t_j',T\}$ for $j= 1,\dots,\lceil pT \rceil$.  Let $h_j:= \sum_{k=1}^j | X_{t_k}-X_{t_{k-1}} |$ and let 
\[
j_t:=\max \left\{ j\le \lceil pT \rceil \,\big|\, t_j<t  \right\}
\]
 be the smaller of $\lceil pT \rceil$ and the number of the balls hit by $X_s$ before time $t\in (0, T]$ (if $t>T$, then $j_t=\lceil pT \rceil$).    Of course, these are all measurable functions of $\omega\in\Omega$.   Finally, let $\Omega':=\{\omega\in\Omega \,|\, j_T(\omega)=\lceil pT \rceil \}$ be the set of those $\omega$ for which at least $\lceil pT \rceil$ balls are hit by $X_s(\omega)$ before time $T$.  Thus we need to show $\bbP(\Omega') \le  e^{-PT}$.

We now claim that there is $\gamma(T)\to\infty$ as $T\to\infty$ (also depending on $p,P$ but nothing else) such that  (cf.~the Remark above)
\beq \lb{3.24}
 h_{\lceil pT \rceil}  \ge \gamma(T)T \qquad \text{for any $\omega\in\Omega'$.}
\eeq
Indeed, let $\omega\in\Omega'$ and $H=H(\omega):=h_{\lceil pT \rceil}(\omega)$.
For any  $x\in\bbR^d$ we have by \eqref{3.21},
\[
\sum_{j=1}^{ \lceil pT \rceil} \left( 2+\left| X_{t_j}-x \right| \right)^{-1} \le P.
\]
If we take $x:=rX_{t_k}+(1-r)X_{t_{k-1}}$ for some $k=1,\dots,\lceil pT \rceil$ and $r\in[0,1)$, then this gives
\beq \lb{3.23}
\sum_{j=1}^{\lceil pT \rceil} \left( 2+ |rh_k+(1-r)h_{k-1}-h_j| \right)^{-1} \le P.
\eeq
For each $q\in[0,H)$ we let $(r_q,k_q)\in[0,1)\times\{1,\dots,\lceil pT \rceil\}$ be the unique couple such that $q=r_qh_{k_q}+(1-r_q)h_{k_q-1}$.  Integrating \eqref{3.23} over $q$ with $(r,k)=(r_q,k_q)$ yields 
\[
\sum_{j=1}^{\lceil pT \rceil} \int_0^{H} (2+|q-h_j|)^{-1} dq  \le P H.
\]
Since $h_j\in[0,H]$, we obtain
\[
\lceil pT \rceil \ln \frac{2+H}2 \le PH,
\]
yielding \eqref{3.24} with $\gamma(T):=\tfrac pP\ln T$ for $T\ge e^{2P/p}$.  Then  $|b|\le P$ implies also 
\beq \lb{3.25}
\sum_{k=1}^{\lceil pT \rceil} | B_{t_k}-B_{t_{k-1}} |\ge (\gamma(T)-P)T  \qquad \text{for any $\omega\in\Omega'$.}
\eeq

Let now $\{e^{(1)},\dots,e^{(d)}\}$ be the standard basis in $\bbR^d$ and let 
\[
E:=\{ e\in\bbR^d \,|\, e\cdot e^{(l)}\in\{1,-1\} \text{ for each } l=1,\dots,d \}
\]
be the set of the $2^d$ reflections of $(1,\dots,1)$ across subspaces generated by all $2^d$ subsets of the standard basis.  Notice that $E$ is a group if endowed with coordinate-wise multiplication.  For any $\hat e=(e_0,\dots,e_{\lceil pT \rceil})\in E^{\lceil pT \rceil +1}$, define
\[
B_t^{\hat e} := \sum_{j=1}^{j_t} \left( B_{t_j} - B_{t_{j-1}} \right) \prod_{k=0}^{j-1} e_k + \left( B_{t} - B_{t_{j_t}} \right) \prod_{k=0}^{j_t} e_k,
\]
with all multiplications coordinate-wise.
That is, $B_t^{\hat e}$ is obtained from $B_t$ after $\lceil pT \rceil +1$ reflections corresponding to $e_0,\dots,e_{\lceil pT \rceil}$ at stopping times $t_0=0,t_1,\dots,t_{\lceil pT \rceil}$.  (Note that what gets reflected according to $e_j$ is  the displacement $B_t-B_{t_j}$ for any $t>t_j$.  So in particular, $B_t-B_{t_{j_t}}$ gets reflected $j_t+1$ times --- according to $e_0,e_1,\dots,e_{j_t}$.)  

Since $t_j$ are stopping times, each $B_t^{\hat e}$ is also a standard Brownian motion.  For any $\omega\in\Omega$, there  is $\hat e\in E^{\lceil pT \rceil +1}$ such that for $j=1,\dots,\lceil pT \rceil$ (and with $\cdot$ the usual dot product in $\bbR^d$),
\[
\left[ ( B_{t_j} - B_{t_{j-1}})\prod_{k=0}^{j-1} e_k \right] \cdot (1,\dots,1)\ge |B_{t_j} - B_{t_j-1}|.
\]
Indeed, one only needs to choose $e_j$ successively so that $( B_{t_j} - B_{t_j-1})\prod_{k=0}^{j-1} e_k$ has all $d$ coordinates non-negative.  So by \eqref{3.25}, for each $\omega\in\Omega'$, there is $\hat e\in E^{\lceil pT \rceil +1}$ such that
\[
B_{t_{\lceil pT \rceil}}^{\hat e} \cdot (1,\dots,1) \ge  (\gamma(T)-P)T
\]
Since $t_{\lceil pT \rceil}\le T$, we obtain 
\[
\bbP(\Omega') \le 2^{d(\lceil pT \rceil+1)} \bbP \left( B_t\cdot (1,\dots,1) \ge  (\gamma(T)-P)T \text{ for some } t\le T \right).
\]
Given any $C>0$, the last probability is less than  $e^{-CT}$ for all large enough $T$ because $\lim_{T\to\infty} \gamma(T) =\infty$.  Taking $C:=2dp\ln 2+P$ yields $\bbP(\Omega') \le  e^{-PT}$ for all large enough $T$, finishing the proof of Lemma \ref{L.3.4}.
\end{proof}

\section{Proofs of Theorems \ref{T.1.11} and \ref{T.1.12}} \lb{S11}

These proofs follow the same lines as those of Theorems \ref{T.1.2}(i) and \ref{T.1.3}.  The only differences in the proof of Theorem \ref{T.1.11} will be in the proofs of Lemma \ref{L.2.1} and Lemma \ref{L.3.2}, where \eqref{1.21} will play a central role.  In what follows, let us consider the setting of Theorem \ref{T.1.11} (in particular, $\calF$ is fixed) but for now also only $(f,u^+)\in\calF$ and $\tht\ge 0$.  All constants will now depend on  $\calF$ instead of $f_0,K,\zeta,\eta$ (but not on $\tht$ from (H) unless explicitly noted).

Before we start, let us note that since $\inf_{x\in\bbR^d} u^+(x) >\tht_0$ and $f(x,u)\ge f_0(u)>0$ for $u\in(\tht_0,\tht_1)$,  it follows from elliptic regularity and the maximum principle that in fact $\inf_{x\in\bbR^d} u^+(x) \ge \tht_1$.

\begin{lemma} \lb{L.11.1}
There is $\eps_0=\eps_0(\calF)> 0$ such that for each $c<c_0$ and $\eps>0$ there is $\tau=\tau(\calF,c,\eps)$ such that the following holds.  If $u\in[0,u^+]$ solves \eqref{1.1}, \eqref{1.2} with $(f,u^+)\in\calF$, and $u(t_1,x)\ge u^+(x)-\eps_0$ for some $(t_1,x)\in [t_0+1,\infty)\times\bbR^d$, then for each $t\ge t_1+\tau$,
\beq \lb{11.1}
\sup_{|y-x|\le c(t-t_1)} \left[ u^+(x)-u(t,y) \right] \le \eps.
\eeq
The same result holds if the hypothesis $u(t_1,x)\ge u^+(x)-\eps_0$ is replaced by
\beq\lb{11.1a}
u(t_1,\cdot) \ge \frac {\tht_1+\tht_0}2 \chi_{B_R(x)}(\cdot)
\eeq
 for some $(t_1,x)\in [t_0,\infty)\times\bbR^d$ and a large enough $R=R(f_0)>0$.
\end{lemma}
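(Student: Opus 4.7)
The plan is to mirror the proof of Lemma \ref{L.2.1}, with the target value $1$ replaced by the variable equilibrium $u^+$, and with the closure, translation invariance, and condition \eqref{1.21} of the class $\calF$ substituting for the direct spreading argument that previously forced $u\to 1$.  For the second claim I would first produce the spreading lower bound: let $v$ solve \eqref{1.1} for the homogeneous bistable reaction $f_0$ with initial datum $\tfrac{\tht_1+\tht_0}2\chi_{B_R(0)}$.  Since $\int_0^{\tht_1} f_0\,du>0$ gives $c_0>0$, the Aronson--Weinberger theorem \cite{AW} yields, for $R$ large enough (depending only on $f_0$), that $v(t,z)\to \tht_1$ with speed $c_0$; the comparison principle and $f\ge f_0$ then give $\liminf u(t,y)\ge \tht_1$ uniformly on $\{|y-x|\le c(t-t_1)\}$ for any $c<c_0$.

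The main task is to upgrade $\liminf u\ge \tht_1$ to the stronger $\liminf [u^+(y)-u(t,y)]\le \eps$, and this is where I expect the proof's subtlety to lie.  I would argue by contradiction: if the claim fails for some $\eps,c$, pick a violating sequence $(f_n,u_n^+,u_n,t_1^n,x_n,t_n,y_n)$ with $t_n-t_1^n\to\infty$, $|y_n-x_n|\le c(t_n-t_1^n)$, and $u_n^+(y_n)-u_n(t_n,y_n)\ge \eps$.  Shifting $(t_n,y_n)$ to the origin, and using translation invariance of $\calF$, parabolic regularity for $u_n$, elliptic regularity for $u_n^+$, and closure of $\calF$ under locally uniform convergence, I extract a subsequential limit $(\tilde f,\tilde u^+,\tilde u)$ with $(\tilde f,\tilde u^+)\in\calF$, $\tilde u$ a bounded entire solution of \eqref{1.1} on $\bbR\times\bbR^d$, $0\le \tilde u\le \tilde u^+$, and $\tilde u^+(0)-\tilde u(0,0)\ge \eps$.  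The spreading estimate of the previous paragraph passes to the limit to yield $\tilde u\ge \tht_1$ on all of $\bbR\times\bbR^d$.  Comparing $\tilde u$ with time-shifted copies of the $\tilde f$-solution $w$ starting from the flat sub-solution $\tht_1$ (a sub-solution because $\tilde f(\cdot,\tht_1)\ge f_0(\tht_1)=0$), and sending the shift to $-\infty$, then produces $\tilde u\ge p$ everywhere, where $p$ is the monotone equilibrium limit of $w$, satisfying $\tht_1\le p\le \tilde u^+$.

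The decisive step is the invocation of \eqref{1.21}.  Since $\zeta<c_0^2/4$, a spreading-speed comparison of the linear equation $w_t=\Delta w+\zeta w$ (speed $2\sqrt\zeta$) against the nonlinear $f_0$-equation (speed $c_0$) forces the existence of some $u_*\in(\tht_0,\tht_1)$ with $f_0(u_*)>\zeta u_*$; hence $\alpha_{\tilde f}(x;\zeta)\le u_*<\tht_1\le p(x)$ for every $x\in\bbR^d$.  If $p\not\equiv \tilde u^+$, the elliptic strong maximum principle applied to the nonnegative function $\tilde u^+-p$ (which solves a linear equation with bounded zeroth-order coefficient by Lipschitzness of $\tilde f$) forces $0<p<\tilde u^+$ everywhere, so $p$ is a nontrivial equilibrium in the sense of Definition~\ref{D.1.10}.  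But then every integer-corner unit cube $\calC_n$ in $\bbR^d$ contains a point where $p>\alpha_{\tilde f}$, so the series $\sum_n (1+d(x_0,\calC_n))^{-1}$ diverges (in every dimension $d\ge 1$), contradicting \eqref{1.21}.  Hence $p\equiv \tilde u^+$, giving $\tilde u\equiv \tilde u^+$ and contradicting $\tilde u^+(0)-\tilde u(0,0)\ge \eps$.

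The first claim reduces to the second by a simpler compactness argument: I would show that for $\eps_0=\eps_0(\calF)>0$ small enough, $u(t_1,x)\ge u^+(x)-\eps_0$ implies $u(t_1+1,\cdot)\ge \tfrac{\tht_1+\tht_0}2\chi_{B_R(x)}(\cdot)$ with the $R$ from the second claim.  Failure along a sequence produces, after shifting $(t_1^n,x_n)$ to $(0,0)$ and passing to a limit, a solution $\tilde u\le \tilde u^+$ on $(-1,\infty)\times\bbR^d$ with $\tilde u(0,0)=\tilde u^+(0)$ and $\inf_{|y|\le R}\tilde u(1,y)\le \tfrac{\tht_1+\tht_0}2$; but the nonnegative difference $V:=\tilde u^+-\tilde u$ satisfies the linear parabolic inequality $V_t\ge \Delta V-KV$, so the parabolic strong maximum principle forces $V\equiv 0$ on $(-1,0]\times\bbR^d$, and thus $\tilde u(1,\cdot)\equiv \tilde u^+\ge \tht_1>\tfrac{\tht_1+\tht_0}2$, a contradiction.
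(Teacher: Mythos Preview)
Your proof is correct and follows the paper's overall strategy: Aronson--Weinberger spreading for $f_0$ gives the preliminary lower bound $u\gtrsim\tht_1$, and a compactness/contradiction argument then produces an equilibrium $0<p<u^+$ with $p\ge\tht_1>\alpha_f(\cdot;\zeta)$ everywhere, which violates \eqref{1.21} because every integer cube contributes to the sum. The reduction of the first claim to the second via the strong maximum principle is also the same as in the paper.

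There is one genuine variation worth noting. To manufacture the equilibrium $p$, the paper first replaces each $u_n$ by a time-increasing sub-solution $w_n\le u_n$ (built from the profile $W$ in \eqref{11.3}), then passes to a monotone entire limit and sets $p=\lim_{t\to\infty}u(t,\cdot)$. You instead pass to the limit $\tilde u$ of the original (not necessarily monotone) $u_n$, and only afterwards introduce a single monotone solution $w$ starting from the constant sub-solution $\tht_1$; comparison with time-shifted copies of $w$ gives $\tilde u\ge p:=\lim_{t\to\infty}w$, and $p(0)\le\tilde u(0,0)\le\tilde u^+(0)-\eps$ shows $p\not\equiv\tilde u^+$. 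Your route is slightly more economical here (it avoids the $W$-construction), while the paper's route has the side benefit of yielding a monotone entire solution, which aligns with the machinery used elsewhere in the paper. Both are valid and lead to the same contradiction.
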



\begin{proof}
Without loss we can assume $t_0=0$ and $x=0$.
As in the argument in the proof of  Lemma \ref{L.2.1}, one shows that $u(t_1,x)\ge u^+(x)-\eps_0$ (with $t_1\ge 1$ and $u^+\ge\tht_1$) yields \eqref{11.1a}, provided $\eps_0>0$ is sufficiently small. So we only need to prove the second claim.  

Let us therefore assume \eqref{11.1a}. The result from \cite{AW} used in Lemma \ref{L.2.1} also applies to bistable $f_0$, and together with the comparison principle gives for any $c'<c_0$ and $\eps'>0$ existence of $\tau'$ such that,
\beq \lb{11.2}
\inf_{|y|\le c't} u(t,y) \ge \tht_1-\eps'
\eeq
when $t\ge \tau'$. To upgrade this to \eqref{11.1}, we will use \eqref{1.21} along with $\sup_{x\in\bbR^d}\alpha_f(x)<\tht_1$.  The latter holds because otherwise $f_0(u)\le\zeta u$ for all $u\in[0,\tht_1]$, which contradicts $c_0> 2\sqrt\zeta$.

If \eqref{11.1} does not hold for some $c<c_0$ and $\eps>0$, we let $u_n$ be a counterexample with $t_n$ ($\to\infty$ as $n\to\infty$) and $|y_n|\le ct_n$, corresponding to some $(f_n,u^+_n)\in\calF$.  We can assume $(u_n)_t\ge 0$, because \eqref{11.2} with $c'\in(c,c_0)$ and a small enough $\eps'\in(0,\eps)$ lets us find a time-increasing solution $w_n$ of \eqref{1.1} between 0 and $u_n$, defined for $t\ge t'$ with some $t'\ge \tau'$, which still spreads with speed $\ge c'$ in the sense of \eqref{11.2}.  Indeed, similarly to \eqref{4.1}, we let $w_n(t',x):=W(|x|)$, where 
\beq\lb{11.3}
W(s):= 
\begin{cases}
\tht_1-\eps' & s\le R',
\\ U(s-R') & s\in(R',R'+s_0],
\\ 0 & s>R'+s_0
\end{cases}
\eeq
and $U,s_0$ are obtained as for \eqref{4.1} but with the current $f_0$ and $U(0)=\tht_1-\eps'$.  Here we need $\eps'>0$ to be small enough (such that $\int_0^{\tht_1-\eps'} f_0(u) du>0$) and $R'$ larger than the right-hand side of \eqref{4.1a} (so that  $W(|x|)$ satisfies \eqref{4.2}).  So each $w_n$ is time-increasing, and also satisfies \eqref{11.2} for large $t$ if $\eps'\le\tfrac 12(\tht_1-\tht_0)$ and $R'\ge R$, with $R$ from \eqref{11.1a}.  Then we only need $t'\ge \max\{\tau',(R'+s_0)/c'\}$ to get $w_n\le u_n$ for all $n$ (note that $\eps',c',\tau',U,s_0,R',t'$ are independent of $n$).

Let therefore $(u_n)_t\ge 0$ be such counterexamples, with $t_n\to\infty$ and $|y_n|\le ct_n$ such that $u_n(t_n,y_n)\le u^+_n(y_n)-\eps$.  After shifting $u_n$ by $(-\tfrac{c'+c}{2c'}t_n,-y_n)$ (and $f_n,u^+_n$ by $y_n$) and passing to a subsequence, we recover an entire time-increasing solution $u$ of \eqref{1.1} with new $(f,u^+)\in\calF$, such that $u\in[\tht_1,u^+]$ (due to \eqref{11.2} for each $u_n$ and all $\eps'>0$) and $\lim_{t\to\infty} u(t,0)\le u^+(0)-\eps$.   As before, the function $p(x):=\lim_{t\to\infty} u(t,x)$ is an equilibrium solution of \eqref{1.1} with $p(0)\le u^+(0)-\eps$.  Since $p\in[\tht_1,u^+]$, the strong maximum principle yields $p <u^+$, and we also have $p\ge\tht_1 >\alpha_f$ on $\bbR^d$.  So the sum in \eqref{1.21} equals $\infty$, contradicting $(f,u^+)\in\calF$.
\end{proof}

\begin{proof}[Proof of Theorem \ref{T.1.11}]
This is essentially identical to the proofs of Theorem \ref{T.1.2}(i) and Theorem~\ref{T.1.3} for $d=3$, but with $u\in[0,u^+]$ instead of $u\in[0,1]$ and using Lemma \ref{L.11.1} instead of Lemma \ref{L.2.1}.  We will again only assume $(f,u^+)\in\calF$ and $\tht\ge 0$ in most of the proof. 

Lemmas \ref{L.2.1a} and \ref{L.8.1} are unchanged, with the sets $S_{t_0,\eps,\ell}, S_{t_0,L}, S_L$ containing triples $(f,u^+,u)$ and restricted to $(f,u^+)\in\calF$ and $u\in[0,u^+]$, and with $1-\eps$ replaced by $u^+(x)-\eps$ in \eqref{1.7b}.  In Section \ref{S3} we take
\[
Z_y(t):= \inf_{u(t,x)\ge u^+(x)-\eps_0} |x-y| 
\]
and keep $Y^{h}_y(t)$ as before because $u^+\le 1$.  Lemma \ref{L.3.1} is unchanged and Lemma \ref{L.3.2} is proved as in the case $d=3$. We cannot use Lemma \ref{L.2.2} here but \eqref{1.21} will do the job.  Indeed, we let $\kappa\in(0,d^{-1/2})$ be such that if $u(t,\til x)\ge \eta$ for some $(t,\til x)\in [\tfrac 12,\infty)\times\bbR^d$, then 
\beq \lb{11.4}
u(t,x)\ge \frac\eta{2K}  \qquad \text{for any $x\in B_{\sqrt d \kappa}(\til x)$,}
\eeq
which replaces \eqref{3.10}.  We then still conclude \eqref{3.13b} using \eqref{1.21}, although with the right hand side being $\lceil \kappa^{-1} \rceil^d\eta^{-1}$ instead of $\kappa^{-4}$.  The rest of the proof is unchanged, as is Theorem \ref{T.3.3} and Corollary \ref{C.3.3a}, except for $1-\eps$ being replaced by $u^+(x)-\eps$ in \eqref{3.9g}.  Section \ref{S4} is also unchanged, using only the arguments in Case $d=3$.  This proves Theorem \ref{T.1.11}(ii) for $u$.
 
The proofs of the remarks at the beginning of Section \ref{S4a} remain the same, with $W$ from \eqref{11.3}  instead of \eqref{4.1} and $R':=R_2$.  Theorem \ref{T.5.1} is also unchanged (note that here we need $\tht>0$ because we employ Theorem \ref{T.1.5}(ii)) and so is the proof of Theorem \ref{T.1.3}(ii).  This proves Theorem \ref{T.1.11}(ii) for $v$.

Finally, since we have \eqref{1.24}, the argument after Theorem \ref{T.5.1} which proves Theorem \ref{T.1.2}(i) also remains the same, with  each ``$1-$'' is replaced by ``$u^+(x)-$''.
\end{proof}

\begin{proof}[Proof of Theorem \ref{T.1.12}]
Let us define $f_0(u)=0$ for $u<0$, and for $\gamma\le 0$ let $c_\gamma$ be the front/spreading speed for $f_0$ but corresponding to fronts connecting $\gamma$ and $\tht_1$ resp. to sufficiently large $u_0\in[\gamma,\tht_1]$ converging to $\gamma$ as $|x|\to\infty$.  It is well known (using phase-plane analysis) that $c_\gamma\in(0,c_0)$ for any $\gamma<0$ as well as $\lim_{\gamma\to 0^-} c_\gamma=c_0$.

(i)  To prove this we will need to construct an appropriate (non-positive) sub-solution, in addition to the super-solution constructed previously.  We will use here the remark at the end of Section \ref{S4a}.  Let us first assume \eqref{1.23}, and let $\gamma:=\inf_{x\in\bbR^d} u_0(x)\le 0$ (then $\gamma=\inf_{(t,x)\in[t_0,\infty)\bbR^d} u(t,x)$ by \eqref{1.24}).  Without loss, we also assume $t_0=0$, $e=e_1$, and $\eps_2\le c_\gamma/4$.  The latter can be done because \eqref{1.23} continues to hold if we replace $\eps_2$  by $\min\{\eps_2,c_\gamma/4\}$ and $R_2$ by $R_2+4c_\gamma^{-1}\ln_+\|u_0\|_\infty$.

First, we claim that 
\beq \lb{11.6}
\limsup_{t\to \infty} \sup_{x\in\bbR^d} [u(t,x)-u^+(x)]\le 0\qquad\text{and}\qquad \liminf_{t\to \infty} \inf_{x\in\bbR^d} u(t,x)\ge 0,
\eeq
where the rate of these decays depends on the same parameters as $T_\eps$ in (C) does, except of $\eps$ (by ``rate'' we mean a function $\til T:(0,\infty)\to (0,\infty)$ such that $\sup_{x\in\bbR^d} [u(t,x)-u^+(x)]\le\del$ and $\inf_{x\in\bbR^d} u(t,x)\ge-\del$ for all $t\ge \til T(\delta)$).  

For the first claim in \eqref{11.6}, let $v(t,x):= u(t,x)-u^+(x)$. Then $v_t\le \Delta v+g(v)$, with
\[
g(s):=\sup_{(f,u^+,x)\in\calF\times\bbR^d} [f(x,u^+(x)+s)-f(x,u^+(x))].
\]
We have $g(s)<0$ for all $s>0$ because otherwise translation invariance of $\calF$ and its closure under locally uniform convergence would yield $(f,u^+)\in\calF$ with $f(x,u^+(x)+s)=f(x,u^+(x))$ for some $s>0$, contradicting the extra hypothesis in the case of \eqref{1.23}.  Obviously $\sup_{x\in\bbR^d} v(t,x)\le \kappa(t)$, where  $\kappa(0):=\sup_{x\in\bbR^d} v(0,x)<\infty$ and $\kappa'=g(\kappa)$.  Thus $\lim_{t\to\infty} \kappa(t)=0$, and the first claim in \eqref{11.6} follows. (If we only have $g\le 0$ but assume $\limsup_{x_1\to-\infty} v(0,x)\le 0$, the claim is immediate from this and \eqref{1.23}.) 

We now turn to the second claim in \eqref{11.6}.  The result from \cite{AW} (see the proof of Lemma~\ref{L.11.1}) for \eqref{1.1} with  $u_0\ge (\tht_0+\eps_1+|\gamma|)\chi_{\{x\,|\,x_1<R_1\}}-|\gamma|$ shows
\beq \lb{11.7}
\inf_{x_1\le R_1+c't} u(t,x)\ge \tht_1-\eps'
\eeq
for any $c'<c_\gamma$, $\eps'>0$, and $t\ge \tau'$ (with $\tau'$ depending only on $f_0,\gamma,\eps_1,c',\eps'$).  
 The comparison principle and \eqref{1.24} yield $u(t,x)\ge  -e^{-\eps_2(x_1-\eps_2t-R_2)}$ because the latter solves the heat equation.  But this, \eqref{11.7} with $c':=c_\gamma/2$ and $\eps':= \tht_1$, and $\eps_2\le c_\gamma/4$ show $\inf_{x\in\bbR^d} u(t,x)\ge -e^{-\eps_2(c_\gamma t/4+R_1-R_2)}$ for $t\ge \tau'$.  The second claim in \eqref{11.6} follows.

\eqref{11.6} shows that for any $\eps>0$ and large enough $t$, the sets $\Omega_{u,\eps}(t)$ and $\Omega_{u,1-\eps}(t)$ from \eqref{1.20a} and \eqref{1.20b} are the same as those in Definition \ref{D.1.4}.  Hence we will use \eqref{1.20a} and \eqref{1.20b}.

We next claim that because of \eqref{11.6} and the parabolic Harnack inequality,  it suffices to prove the result with $L_{u,\eps}(t)$ from \eqref{1.3} instead of \eqref{1.3b}.   First, there is $(K,\|u\|_\infty)$-dependent $M\ge 1$ such that $\sup_{t\ge t_0+1} \|u_t\|_\infty\le M$ for any solution of \eqref{1.1} on $(t_0,\infty)\times\bbR^d$ with  $f$ Lipschitz with constant $K$ and satisfying $f(\cdot,0)\equiv 0$. Given any $\eps\in(0,\tfrac 12)$, consider some small $\eps'>0$ and let $v:=\eps'+u^+-u$ and $T'<\infty$ be such that $\inf_{x\in\bbR^d} v(t,x)\ge 0$ for all $t\ge T'$ (which exists by \eqref{11.6}).  Then we have $v_t- \Delta v - \lambda(t,x) v\ge 0$ for $t\ge T'$, with 
\[
\lambda(t,x):= v(t,x)^{-1}\min\{f(x,u^+(x))-f(x,u(t,x)),0\}
\]
which satisfies $\lambda(t,x)\in [-K,0]$ due to \eqref{1.24}.  So by the Harnack inequality, there is $C\ge 1$ (depending on $\eps, K$) such that if $v(t,x)\le 2\eps'$ for $(t,x)\in[T'+1,\infty)\times\bbR^d$, then $\sup_{|y-x|\le 1/\eps}v(t-\tfrac\eps{2M},y)\le C\eps'$.  If we now let $\eps':=\tfrac \eps{2C}$, 
then this means $u(t,y)\ge u^+(y)- \eps$ for all $y\in B_{1/\eps}(x)$ whenever  $u(t,x)\ge u^+(x)-\eps'$ and $t\ge T'+1$.  Therefore $\ell_{\eps'}$ for $L_{u,\eps'}(t)$ from \eqref{1.3} works as $\ell_\eps$ for $L_{u,\eps}(t)$ from \eqref{1.3b}, provided we can obtain (C) for the former.  

So we can consider $L_{u,\eps}(t)$ from \eqref{1.3}, with $\Omega_{u,\eps}(t)$ and $\Omega_{u,1-\eps}(t)$ from \eqref{1.20a} and \eqref{1.20b}, which is what was done in the proofs of Theorems \ref{T.1.2}(i) and \ref{T.1.11}(i).

Next, notice that \eqref{11.7} and $u\ge  \gamma$ can be upgraded to
\beq \lb{11.8}
\sup_{x_1\le R_1+ct} [u^+(x)-u(t,x)]\le \eps
\eeq
for any $c<c_\gamma$, $\eps>0$, and $t\ge \til\tau$ (with $\til\tau$ depending only on $\calF,c,\eps,\gamma,\eps_1$).  Indeed, this is done using \eqref{11.7} in the same way \eqref{11.2} is used to prove \eqref{11.1}, but with  $U(s_0)=-\gamma$ in the definition of $W$ (we still have $\int_{-\gamma}^{\tht_1-\eps'} f_0(u) du>0$).  In fact, \eqref{11.7} and then \eqref{11.8} hold for any $c<c_0$ because of the second claim in \eqref{11.6} and $\lim_{\gamma\to 0^-} c_\gamma=c_0$.

Let us assume, without loss, that $\tht>0$ is small enough so that $\tht\le \tfrac12(\tht_1-\tht_0)$ and  $\int_{0}^{\tht_1-\tht} f_0(u) du>0$.  We now use \eqref{11.8} with $c:=c_\gamma/2$, $\eps:=\tht$, and the corresponding $\til\tau$,  together with $u(t,x)\ge  -e^{-\eps_2(x_1-\eps_2t-R_2)}$ (shown above) and $\eps_2\le c_\gamma/4$, to obtain for $t\ge\til\tau$,
\[
u(t,x)\ge   (u^+(x)-\tht)\chi_{\{x\,|\, x_1\le R_1+c_\gamma t/2\}}(x) -e^{-\eps_2(x_1-R_2-c_\gamma t/4)} \chi_{\{x\,|\, x_1> R_1+c_\gamma t/2\}}(x) .
\]
Of course, \eqref{1.23} and $f(u)\le Ku$ for $u\ge 0$ also give
\[
u(t,x)\le e^{(\eps_2^2+K) t-\eps_2(x_1-R_2)}.
\]

Now consider $W$ from \eqref{11.3}, with  $\eps':=\tht$ and $R':=R_2$.  Consider also $w$ solving \eqref{1.1} with $w(0,x):=W(x_1)$. As in the remark at the end of Section \ref{S4a}, we obtain
\beq\lb{5.8b}
m:=\inf \left\{ w_t(t,x) \,\bigg|\, (f,u^+)\in \calF_\tht, \, t\ge 0, \text{ and } w(t,x)\in \left[ \frac\tht 2, u^+(x)- \frac\tht 2 \right]  \right\}>0.
\eeq
With $s_0$ from \eqref{11.3}, we let
\[
r:=R_2+s_0- \frac 1{\eps_2} \log \max \left\{ \frac K {\eps_2^2 m} , \frac 2\tht \right\},
\]
and shift $u$ by $(-T,-R)$, and $f,u^+$ by $-R$ in space, where
\[
T:=\max \left\{ \til\tau, 4{c_\gamma}^{-1} \left( 2R_2-R_1+s_0-r  \right) \right\},
\]
\[
R:= \frac {c_\gamma T}4 + R_2-r.
\]
Since $\tfrac 12 c_\gamma T\ge R_2-R_1+s_0+R $, the above estimates on the original $u(t,x)$ for $t\ge T$ ($\ge \til\tau$) now give  for the shifted $u,u^+$ and $t\ge 0$,
\beq\lb{11.10}
u(t,x)\ge (u^+(x)-\tht)\chi_{\{x\,|\, x_1\le R_2+s_0+c_\gamma t/2\}}(x) -e^{-\eps_2(x_1-r-c_\gamma t/4)} \chi_{\{x\,|\, x_1> R_2+s_0+c_\gamma t/2\}}(x) ,
\eeq
\beq\lb{11.11}
u(0,x)\le e^{-\eps_2(x_1-R_2+R-\eps_2T-KT/\eps_2)}.
\eeq

The crucial point here is that $u(t,x)\ge u^+(x)-\tht$ when
\beq\lb{11.9}
x_1\le \frac{c_\gamma t}2 + \frac 1{\eps_2} \log \max \left\{ \frac K {\eps_2^2 m} , \frac 2\tht \right\} +r .
\eeq
Hence $v$ from \eqref{5.9a}, which by the argument from the remark at the end of Section \ref{S4a} is a subsolution of \eqref{1.1} on the  set of $(t,x)\in(0,\infty)\times\bbR^d$ such that either the opposite of \eqref{11.9} holds or $v(t,x)\ge u^+(x)-\tht$, will stay below $u$ as long as $v(0,\cdot)\le u(0,\cdot)$.  But this holds due to \eqref{11.10} for $t=0$ because $w(0,\cdot)\le \tht_1-\tht\le u^+(\cdot)-\tht$ and $w(0,\cdot)$ vanishes for $x_1\ge R_2+s_0$.

Thus \eqref{5.9a} shows that the first inequality in \eqref{5.2} holds with $\tau=1$ and
\[
\nu(t):= \max \left\{ \sup_{x_1\le r+c_\gamma t/2} [u^+(x)-u(t, x)],\, e^{(\eps_2^2 -c_\gamma \eps_2/2) t} \right\}
\]
because $w\le u^+$ and $w_t\ge 0$.    Moreover,  $\lim_{t\to\infty}\nu(t)=0$ due to $0<\eps_2<c_\gamma/2$ and \eqref{11.8} for some $c>c_\gamma/2$ and any $\eps>0$.

On the other hand, as in Section \ref{S4a}, we also have a super-solution of \eqref{1.1} on $(0,\infty)\times\bbR^d$ from \eqref{5.9b}, with some large $\tau$ and $R_2$ replaced by $R_2-R+\eps_2T+KT/\eps_2$.  This then stays above $u$ due to \eqref{11.11}.  As in Section \ref{S4a}, we obtain the second inequality in \eqref{5.2} with some $\nu(t)\to 0$ as $t\to\infty$.  The (H') version of Theorem \ref{T.5.1} now finishes the proof.

The proof in the case \eqref{1.22} is similar, with $x_1$ replaced by $|x-x_0|$ and sufficiently large $R_1$ to guarantee \eqref{11.7} with $x_1$ replaced by $|x-x_0|$.  Notice that the first claim in \eqref{11.6} follows from \eqref{1.22}, even though now we only have  $g\le 0$.

(ii)  Similarly to (i), the extra hypotheses in (ii)  imply both claims in
 \eqref{11.6}.  So again we can consider $L_{u,\eps}(t)$ from \eqref{1.3}, with $\Omega_{u,\eps}(t)$ and $\Omega_{u,1-\eps}(t)$ from \eqref{1.20a} and \eqref{1.20b}.  Moreover \eqref{3.6a} shows $u_t\ge 0$ so we must have $u\le u^+$.

Assume again $t_0=0$ without loss and notice that the hypotheses continue to hold if we replace $u_0$ by $u(t,\cdot)$ for any $t\ge 0$.  Indeed,  for all small enough $\eps>0$ and all $y\in\bbR^d$ we have $Z_y(0)\le Y^{\eps/2}_y(0)+L_{u,\eps/2,1-\eps_0}(0)$ as in the case $d=3$ of the proof of Theorem \ref{T.1.3}(i).  Since $u_t\ge 0$, the (H') version of Lemma \ref{L.3.1}(i) now gives
\[
Z_y(t)\le Z_y(0)\le Y^{\eps/2}_y(0)+L_{u,\eps/2,1-\eps_0}(0)\le Y^{\eps/2}_y(t)+c_Y't+r_Y+L_{u,\eps/2,1-\eps_0}(0).
\]
If $u(t,y)\ge\eps$, then $Y^{\eps/2}_y(t)\le \psi^{-1}(\tfrac\eps 2)$, so this yields 
\[
L_{u,\eps,1-\eps_0}(t)\le  \psi^{-1}(\tfrac\eps 2)+c_Y't+r_Y+L_{u,\eps/2,1-\eps_0}(0)<\infty.
\]

This and \eqref{11.6} mean that we can assume without loss that $\gamma:= \min\{\inf_{x\in\bbR^d} u_0(x),0\}=\min\{\inf_{(t,x)\in[0,\infty)\bbR^d} u(t,x),0\}$ is such that $c_\gamma>c_Z$ from \eqref{3.0}.
But then Lemma \ref{L.11.1} shows that the (H') version of Lemma \ref{L.3.1}(iii) continues to hold because now $u\in[\gamma,u^+]$.  The rest of the proof of Theorem~\ref{T.1.3} (or rather Theorem \ref{T.1.11}(ii)) then carries over to the case $u\in[\gamma,u^+]$, with $c_0$ replaced by $c_\gamma$ and the obvious (minimal) changes
(notice also that the second claim in \eqref{11.6}, which holds for any $(f,u^+)\in\calF$ and bounded $u_0$ satisfying \eqref{3.6a}, precludes existence of equilibrium solutions $p$ of \eqref{1.1} with $\gamma<p<u^+$ and $\inf_{x\in\bbR^d} p(x)<0$).  Since we can take $\gamma$ arbitrarily close to 0, by replacing $u_0$ with $u(t,\cdot)$ for a large enough $t$, we finally also obtain global mean speed in $[c_0,c_1]$.
The proof is thus finished.
\end{proof}

\section{Proof of Theorem \ref{T.1.5}} \lb{S8}


Let $K\ge 1$ be a Lipschitz constant for $f$ and pick a non-increasing and continuous function $L:(0,\tfrac 12)\to(0,\infty)$ such that $(\sup_{t\in\bbR} L_{u,\eps}(t)=)\,L^{u,\eps}\le L(\eps)$ for all $\eps\in(0,\tfrac 12)$.  We can do so because $L^{u,\eps}$ is finite and non-increasing in $\eps$.

(i) Let $m_0:=\inf_{(t,x)\in \bbR^{d+1}} u(t,x)$ and $m_1:=\sup_{(t,x)\in \bbR^{d+1}} [u(t,x)-u^+(x)]$.  
It is easy to see that Lemma \ref{L.8.1}(i,ii) extends to the case when $S_{t_0,\eps,\ell}= S_{t_0,\eps,\ell}(K,m_0,m_1)$ is defined 
to be the set of all triples $(f,u^+,u)$ such that  $f$ is Lipschitz with constant $K$, the functions $u^-\equiv 0$ and $u^+$ satisfy \eqref{1.19} and are equilibrium solutions of \eqref{1.1}, and $u$ with $m_0\le u\le u^+ +m_1$ solves \eqref{1.1} on $(t_0,\infty)\times\bbR^d$ and satisfies $L_{u,\eps'}(t)\le \ell$ for all $\eps'\in(\eps,\tfrac 12)$ and all $t> t_0$ (with $L_{u,\eps}(t)$ from Definition \ref{D.1.4}).

Assume first that $m_1>0$.  Take $(t_n,x_n)$ such that $u(t_n,x_n)-u^+(x_n)\to m_1$ and define $f_n(x,u):=f(x-x_n,u)$, $u^+_n(x):=u^+(x-x_n)$, and $u_n(t,x):=u(t-t_n,x-x_n)$.   Since $(f_n,u^+_n,u_n)$ belongs to the corresponding  $S_{-\infty,L}=S_{-\infty,L}(K,m_0,m_1)$, we obtain as in the proof of Lemma \ref{L.8.1} some new $(f,u^+,u)\in S_{-\infty,L}$ (and thus also $u\not\equiv u^+ +m_1$) such that $u\le u^+ + m_1$ and $u(0,0)=u^+(0)+m_1$.  The function $u^+ +m_1$ is a super-solution of \eqref{1.1} due to \eqref{1.24}, so the strong maximum principle yields a contradiction with $u\not\equiv u^++m_1$.  Thus $m_1\le 0$ and the strong maximum principle also shows $u<u^+$.

The case $m_0<0$ is identical, this time using that the constant $m_0$ is a sub-solution of \eqref{1.1} due to  \eqref{1.24}.  We obtain $m_0\ge 0$ and then also $u>0$.  

(ii) By the discussion following Definition \ref{D.1.4} (see also the proof of Theorem \ref{T.1.12}(i) above),  (i) shows that it is equivalent to use  \eqref{1.3} instead of \eqref{1.3b} in what follows.  We will do so in (ii) and (iii), including in the definition of
$S_{t_0,\eps,\ell}=S_{t_0,\eps,\ell}(K,0,0)$ from (i) (and thus also in $S_{t_0,L},S_L$, with the condition $u\not\equiv 0,u^+$ for $S_L$).   In addition, (i) and $u^-\equiv0$ show that we have \eqref{1.20a} and \eqref{1.20b}.

Since $u$ propagates with a positive global mean speed,  \eqref{1.3g} shows that $u$ is a transition solution connecting $u^-\equiv 0$ and $u^+$.  Indeed, $u\not\equiv u^+$ gives $\Omega_{u,1-\eps}(0)\neq\bbR^d$ for all small $\eps>0$.  Thus the first inclusion in \eqref{1.3g}, with $t\to-\infty$ and $\tau:=-t$, shows the $t\to-\infty$ limit in \eqref{1.8}.  Also, $u\not\equiv 0$  gives $\Omega_{u,\eps}(0)\neq\emptyset$ for all small $\eps>0$.  Thus the  first inclusion in \eqref{1.3g}, with $t=0$ and $\tau\to\infty$,  shows the $t\to\infty$ limit in \eqref{1.8}.


Assume now that $\tht>0$ is as in (ii), take $\eps_0:= \tht/ 2>0$, and let
\[
u^s(t,x):=u(t+s,x)
\]
be a time shift of $u$.  It is then sufficient to show that $u^s\ge u$ for any $s\ge 0$.  Indeed, we then obtain $u_t\ge 0$, and the strict inequality follows from the strong maximum principle for $u_t$ (which satisfies a linear equation and is not identically 0 because $u$ is a transition solution).

\begin{lemma} \lb{L.8.2}
There is $s_0$ such that $u^s\ge u$ whenever $s\ge s_0$.  
\end{lemma}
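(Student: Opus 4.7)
The plan is a sliding argument exploiting the non-increasing hypothesis on $f$ near the equilibria $0$ and $u^+$.

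First, since $u$ propagates with a positive global mean speed in some $[c,c']$, Definition~\ref{D.1.1b} applied with $\eps=\eps_0=\tht/2$ and any fixed $\delta\in(0,c)$ supplies $s_0=\tau_{\eps_0,\delta}>0$ such that for every $s\ge s_0$ and every $t\in\bbR$,
\[
u(t,x)\ge\eps_0 \ \Longrightarrow\ u^s(t,x)=u(t+s,x)\ge u^+(x)-\eps_0.
\]
Fix any $s\ge s_0$, set $v:=u^s-u$, and let $A:=\{(t,x):v(t,x)<0\}$. On $A$ one has $u^s<u$, so the above implication forces either (a) $u(t,x)<\eps_0$ (whence $0\le u^s<u<\eps_0\le\tht$), or (b) $u(t,x)\ge\eps_0$ (whence $u^+(x)-\eps_0\le u^s<u\le u^+(x)$, using part (i)). In both cases both values lie in an interval on which $f(x,\cdot)$ is non-increasing by the hypothesis of part (ii), so the linearizing coefficient
\[
b(t,x):=\int_0^1 f_u\bigl(x,\lambda u^s(t,x)+(1-\lambda)u(t,x)\bigr)\,d\lambda
\]
satisfies $b\le 0$ on $A$. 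Since $v$ solves $v_t=\Delta v+b(t,x)v$ with $\|b\|_\infty\le K$, this yields $v_t\ge\Delta v$ on $A$.

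Suppose toward contradiction that $m:=\inf v<0$ and pick $(t_n,x_n)$ with $v(t_n,x_n)\to m$. Translate: set $f_n(x,u):=f(x+x_n,u)$, $u^+_n(x):=u^+(x+x_n)$, $u_n(t,x):=u(t+t_n,x+x_n)$. Applying the (H$'$) version of Lemma~\ref{L.8.1} noted at the start of Section~\ref{S8}, we pass to a locally uniformly convergent subsequence whose limit $(\tilde f,\tilde u^+,\tilde u)$ is an entire solution of the limiting PDE, with $\tilde u$ retaining bounded width (same~$L$). The inclusions defining positive global mean speed are preserved under locally uniform convergence, so the dichotomy (a)/(b) transfers verbatim to $\tilde u$ and $\tilde v:=\tilde u^s-\tilde u$. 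The function $\tilde v$ attains its infimum $m<0$ at $(0,0)$, and since $(0,0)$ satisfies (a) or (b), continuity furnishes a parabolic cylinder $Q=[-\rho^2,0]\times B_\rho(0)$ on which both $\tilde u$ and $\tilde u^s$ remain in the same monotonicity interval, hence $\tilde b\le 0$ on $Q$. The parabolic strong minimum principle, applied to $\tilde v$ as a super-solution of the heat equation on $Q$ with interior minimizer $(0,0)$, forces $\tilde v\equiv m$ on $Q$.

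In particular $\tilde v(-\rho^2,0)=m$, and the same analysis at this new minimum point (where case (a) or (b) must again hold) extends $\tilde v\equiv m$ to $[-2\rho^2,-\rho^2]\times B_\rho(0)$. Iterating yields $\tilde v(t,0)=m$ for all $t\le 0$. However, $\tilde u$ inherits both positive global mean speed and bounded width, so by the opening argument of the proof of part~(ii) (which uses only these two properties together with $\tilde u\not\equiv 0,\tilde u^+$), $\tilde u$ is a transition solution connecting $0$ to $\tilde u^+$; in particular $\tilde u(t,0)\to 0$ and $\tilde u^s(t,0)=\tilde u(t+s,0)\to 0$ as $t\to-\infty$, forcing $\tilde v(t,0)\to 0$ and contradicting $\tilde v(t,0)\equiv m<0$. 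Hence $m\ge 0$, i.e.\ $u^s\ge u$. The most delicate point is the passage to the limit together with the iterative backward extension: one must verify that positive global mean speed survives locally uniform convergence, so that the dichotomy guaranteeing $\tilde b\le 0$ applies at every successive minimum and the backward chain of cylinders reaches arbitrarily large negative times.
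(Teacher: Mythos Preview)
Your approach is essentially correct but takes a different route from the paper's, and one step needs more justification. The paper uses the classical sliding device: rather than working with $v=u^s-u$ directly, it takes the smallest $\eps\in[0,\eps_0]$ for which $u^s+\eps\ge u$, so that $v:=u^s+\eps-u\ge 0$ \emph{globally}, and then derives the \emph{global} inequality $v_t\ge\Delta v-Kv$ (using monotonicity of $f$ only where $u^s<u$, and the Lipschitz bound elsewhere). After passing to a limit with $\tilde v(0,0)=0$, a single application of the strong maximum principle on all of $(-\infty,0]\times\bbR^d$ gives $\tilde v\equiv 0$ there, and the contradiction is that $\tilde u^{-sn}\equiv\tilde u+n\eps$ is unbounded. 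No iteration is needed, and the endgame uses only boundedness of $\tilde u$ rather than the transition-solution property you invoke.

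In your iterative scheme, the assertion ``Iterating yields $\tilde v(t,0)=m$ for all $t\le 0$'' requires that the successive cylinder radii $\rho$ do not shrink to zero; this is true but not immediate from what you wrote. The reason it holds is that at each minimum the dichotomy places both $\tilde u$ and $\tilde u^s$ in the \emph{inner half} $[0,\eps_0]=[0,\tht/2]$ (respectively $[\tilde u^+-\tht/2,\tilde u^+]$) of the relevant monotonicity interval, so the uniform parabolic Lipschitz bounds on $\tilde u,\tilde u^s,\tilde u^+$ furnish a cylinder of \emph{uniform} size on which both remain in $[0,\tht]$ (respectively $[\tilde u^+-\tht,\tilde u^+]$). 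Also, your claim that $\tilde v$ is a super-solution of the heat equation on $Q$ needs $\tilde v\le 0$ on $Q$ (since $\tilde b\tilde v\ge 0$ requires $\tilde v\le 0$ when $\tilde b\le 0$); either shrink $Q$ further using continuity of $\tilde v$ at $(0,0)$, or work instead with $w:=\tilde v-m\ge 0$ and observe that $w_t-\Delta w-\tilde b w=\tilde b m\ge 0$ on $Q$, which yields $w\equiv 0$ directly since the zeroth-order coefficient $-\tilde b$ is nonnegative there.
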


\begin{proof}
Since $u$ propagates with a positive global mean speed (with some $c>0$ and $\tau_{\eps,\delta}<\infty$ in Definition \ref{D.1.1b}),  we have $\Omega_{u,\eps_0}(t)\subseteq \Omega_{u,1-\eps_0}(t+s)$ for all $t\in\bbR$ and $s\ge s_0:=\tau_{\eps_0,c/2}$.
Thus  for $s\ge s_0$ we have $u^s+\eps_0\ge u$ 
as well as
\beq \lb{8.2}
u^s(t,x)\ge u(t,x) \text{ whenever } u(t,x)\in [\eps_0,u^+(x)-\eps_0].
\eeq

Next take any $s\ge s_0$ and let $\eps\ge 0$ be the smallest number such that $u^s+\eps\ge u$.  Obviously, $\eps$ exists and  $\eps\le \eps_0$.  We need to show that $\eps=0$, so assume that $\eps>0$ and let $(t_n,x_n)$ satisfy
\[
\lim_{n\to\infty} [u^s(t_n,x_n)+\eps-u(t_n,x_n)] = 0.
\]
Then \eqref{8.2} shows that $u(t_n,x_n)\notin[\eps_0,u^+(x_n)-\eps_0]$ for large enough $n$, so  either $u(t_n,x_n)\in [\eps,\eps_0]$ or $u(t_n,x_n)\in [u^+(x_n)-\eps_0,u^+(x_n)]$.  
Apply the $u^+\not\equiv 1$ version of Lemma \ref{L.8.1}(ii) with $t_n(\eps)=-\infty$, $f_n(x,u):=f(x-x_n,u)$, $u^+_n(x):=u^+(x-x_n)$, and $u_n(t,x):=u(t-t_n,x-x_n)$.  We obtain $(\til f,\til u^+,\til u)\in S_{-\infty,L}(K,0,0)$ such that  $\til u\in[0,\til u^+]$, $\til u^s+\eps\ge \til u$, 
\beq\lb{8.2a}
\til u^s(t,x)\ge \til u(t,x) \text{ whenever } \til u(t,x)\in [\eps_0,\til u^+(x)-\eps_0]
\eeq
by \eqref{8.2}, and
\[
\til u^s(0,0)+\eps=\til u(0,0) \qquad( \in[\eps,\eps_0]\cup[\til u^+(0)-\eps_0,\til u^+(0)] ).
\]
Moreover, $\til f$ is non-increasing in $u$ on $[0,\tht]$ and on $[\til u^+(x)-\tht,\til u^+(x)]$ because $f,u^+$ have the same property.

Let now $v:= \til u^s+\eps-\til u\ge 0$, so that $v_t = \Delta v + \til f(x,\til u^s)-\til f(x,\til u)$.  We then have 
\[
v_t \ge \Delta v-Kv.
\]
Indeed,  this obviously holds when $\til u^s(t,x)\ge \til u(t,x)$.  When $\til u^s(t,x)< \til u(t,x)$, then \eqref{8.2a} and $\eps\le\eps_0$ show $\til u^s(t,x),\til u(t,x)\in [0,\eps_0]\cup[\til u^+(0)-2\eps_0,\til u^+(0)]$, so $\til f(x,\til u^s(t,x))\ge \til f(x,\til u(t,x))$ by $2\eps_0=\tht$.  Now we obtain $v\equiv 0$ on $(-\infty,0]\times\bbR$ by $v\ge 0$, $v(0,0)=0$, and the strong maximum principle.  But then $\til u^{-sn}\equiv \til u+n\eps$ on $(-\infty,0]\times\bbR$ for $n\in\bbN$, a contradiction with boundedness of $\til u$.   Thus $\eps=0$ for any $s\ge s_0$ and the proof is finished.
\end{proof}

\begin{lemma} \lb{L.8.3}
We have $u^s\ge u$ for any $s\ge 0$.
\end{lemma}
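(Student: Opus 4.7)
The plan is to run a sliding-method continuation of Lemma \ref{L.8.2}. Set $s^* := \inf\{s \geq 0 \,:\, u^s \geq u\}$; by Lemma \ref{L.8.2} we have $s^* \leq s_0$, and since $u$ is continuous the defining set is closed, so $u^{s^*} \geq u$. Suppose for contradiction that $s^* > 0$. Parabolic regularity gives $\|u_t\|_\infty < \infty$, so $|u^s(t,x) - u^{s^*}(t,x)| \leq \|u_t\|_\infty |s - s^*|$; if $\rho := \inf_{(t,x)}(u^{s^*} - u)$ were strictly positive, then $u^{s^* - \eta} \geq u$ for any $\eta \in (0, \rho/\|u_t\|_\infty)$, contradicting minimality of $s^*$. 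Hence there exist $(t_n, x_n)$ with $u^{s^*}(t_n, x_n) - u(t_n, x_n) \to 0$.

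Shift space-time by $\tilde u_n(t,x) := u(t - t_n, x - x_n)$ and analogously $\tilde f_n, \tilde u^+_n$. Using the extension of Lemma \ref{L.8.1}(i,ii) noted at the start of the proof of Theorem \ref{T.1.5}, extract a subsequential locally uniform limit $(\tilde f, \tilde u^+, \tilde u) \in S_L$ with $\tilde u \in [0, \tilde u^+]$, $\tilde u \not\equiv 0, \tilde u^+$, $\tilde u^{s^*} \geq \tilde u$ everywhere, and $\tilde u^{s^*}(0,0) = \tilde u(0,0)$. The difference $v := \tilde u^{s^*} - \tilde u \geq 0$ satisfies $v_t - \Delta v = \tilde f(x, \tilde u + v) - \tilde f(x, \tilde u) \geq -K v$, so $e^{Kt} v$ is a bounded nonnegative supersolution of the heat equation on $\mathbb{R} \times \mathbb{R}^d$ vanishing at $(0,0)$; the strong maximum principle forces $v \equiv 0$ on $(-\infty, 0] \times \mathbb{R}^d$, and uniqueness of the forward Cauchy problem extends this to all of $\mathbb{R} \times \mathbb{R}^d$. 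Thus $\tilde u$ is $s^*$-periodic in time. Applying the strong maximum principle to $\tilde u$ itself (using $\tilde f(\cdot, 0) \equiv 0$ and Lipschitz continuity of $\tilde f$ in $u$) and the fact that $\tilde u \not\equiv 0$ shows $\tilde u > 0$ everywhere.

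The final contradiction exploits the fact that $u$ propagates with a positive global mean speed, in $[c, c']$ say. This property is inherited with identical constants by each shift $\tilde u_n$, and it passes to the locally uniform limit $\tilde u$ at the cost of slightly shrinking $(\epsilon, \delta)$: for fixed $\epsilon' \in (0, \epsilon)$ and $\delta' \in (0, \delta)$, any $y$ with $\tilde u(t, y) \geq \epsilon$ satisfies $\tilde u_n(t, y) > \epsilon'$ for large $n$, so the inclusion for $\tilde u_n$ (valid with $\tau_{\epsilon',\delta'}$) passes to $B_{(c - \delta)\tau}(\Omega_{\tilde u, \epsilon}(t)) \subseteq \Omega_{\tilde u, 1 - \epsilon}(t + \tau)$ for $\tau \geq \tau_{\epsilon', \delta'}$. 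Now choose any $\epsilon \in (0, \tilde u(0, 0))$, so that $0 \in \Omega_{\tilde u, \epsilon}(0)$. Applying this inclusion with $\tau = n s^*$ and using $s^*$-periodicity of $\tilde u$, we get $B_{(c - \delta) n s^*}(0) \subseteq \Omega_{\tilde u, 1 - \epsilon}(n s^*) = \Omega_{\tilde u, 1 - \epsilon}(0)$; letting $n \to \infty$ gives $\Omega_{\tilde u, 1 - \epsilon}(0) = \mathbb{R}^d$, i.e.\ $\tilde u(0, \cdot) \geq \tilde u^+(\cdot) - \epsilon$. Sending $\epsilon \to 0^+$ and combining with $\tilde u \leq \tilde u^+$ from (i) gives $\tilde u(0, \cdot) \equiv \tilde u^+$, and uniqueness of the Cauchy problem forces $\tilde u \equiv \tilde u^+$, contradicting $\tilde u \not\equiv \tilde u^+$. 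Therefore $s^* = 0$.

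The main obstacle is establishing that the propagation property \eqref{1.3g} transfers from $u$ to the subsequential limit $\tilde u$, since super-level set inclusions are not automatically preserved under locally uniform convergence. The fix is to harvest the slack by imposing strict inequalities $\tilde u(t, y) > \epsilon'$ with $\epsilon' < \epsilon$ on the source side, then pass to the limit to recover $\tilde u(t+\tau, x) \geq 1 - \epsilon' > 1 - \epsilon$ on the target side, at the price of replacing $\tau_{\epsilon, \delta}$ by $\tau_{\epsilon', \delta'}$ (which remains finite for $u$). This is precisely why the conclusion $\tilde u(0, \cdot) \geq \tilde u^+(\cdot) - \epsilon$ carries a loss $\epsilon > 0$, later removed by the final limit $\epsilon \to 0^+$.
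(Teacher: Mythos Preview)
Your argument has a genuine gap at the step where you claim the limit $(\tilde f, \tilde u^+, \tilde u) \in S_L$, i.e., that $\tilde u \not\equiv 0, \tilde u^+$. Lemma~\ref{L.8.1} only yields membership in $S_{-\infty, L}$, which does not exclude the trivial equilibria. In fact, since $u$ is a transition solution, the global infimum $\rho = \inf_{(t,x)}(u^{s^*}-u)$ is \emph{always} zero: fixing any $x$ and sending $t \to \pm\infty$ makes both $u(t,x)$ and $u(t+s^*,x)$ converge to the same limit ($0$ or $u^+(x)$). Such sequences $(t_n, x_n)$ are perfectly valid infimizing sequences for $\rho$, but after shifting they produce $\tilde u \equiv 0$ or $\tilde u \equiv \tilde u^+$, and then your periodicity-versus-propagation contradiction evaporates (you can no longer choose $\eps \in (0, \tilde u(0,0))$ in the first case, and in the second case $\tilde u \equiv \tilde u^+$ is trivially time-periodic with nothing to contradict).

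The paper avoids this by restricting the infimum to the ``middle zone'' $\{(t,x): u(t,x) \in [\eps_0, u^+(x)-\eps_0]\}$, defining $m$ as in \eqref{8.1}. If $m = 0$, the infimizing sequence automatically has $u(t_n,x_n) \in [\eps_0, u^+(x_n)-\eps_0]$, so the limit $\tilde u$ satisfies $\tilde u(0,0) \in [\eps_0, \tilde u^+(0)-\eps_0]$ and is genuinely nontrivial --- your (and the paper's) periodicity contradiction then applies. But if $m > 0$, no such sequence exists, and a \emph{separate} sliding argument is required: one uses $m > 0$ and the bound on $\|u_t\|_\infty$ to verify that \eqref{8.2} and $u^s + \eps_0 \ge u$ persist for $s$ slightly below $s_1$, and then reruns the endgame of Lemma~\ref{L.8.2} (which relies on the monotonicity of $f$ near $0$ and near $u^+$) to conclude $u^s \ge u$ there. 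Your proof is missing this entire second branch. (There is also a minor definitional slip: since $u^0 = u$, the set $\{s \ge 0 : u^s \ge u\}$ always contains $0$, so your $s^*$ is identically $0$; you presumably intend the paper's $s_1 = \inf\{s \ge 0 : u^{s'} \ge u \text{ for all } s' \ge s\}$.)
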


\begin{proof}
Let $s_1\ge 0$ be the smallest number such that $u^s\ge u$ for any $s\ge s_1$ (which obviously exists), and assume that $s_1>0$.  We first claim that
\beq\lb{8.1}
m:= \min \left\{ \eps_0, \inf_{u(t,x)\in[\eps_0,u^+(x)-\eps_0]} [u^{s_1}(t,x)-u(t,x)]  \right\} >0.
\eeq
Indeed, if $m=0$, then let $(t_n,x_n)$ be such that $u(t_n,x_n)\in[\eps_0,u^+(x_n)-\eps_0]$ and
\[
\lim_{n\to\infty} [u^{s_1}(t_n,x_n)-u(t_n,x_n)] = 0.
\]
The $u^+\not\equiv 1$ version of Lemma \ref{L.8.1}(ii) with $t_n(\eps):=-\infty$, $f_n(x,u):=f(x-x_n,u)$, $u^+_n(x):=u^+(x-x_n)$, and $u_n(t,x):=u(t-t_n,x-x_n)$, again yields $(\til f,\til u^+,\til u)\in S_{-\infty,L}(K,0,0)$ such that  
\[
\til u^{s_1} \ge \til u \qquad\text{and}\qquad \til u^{s_1}(0,0)=\til u(0,0) \in[\eps_0,\til u^+(0)-\eps_0].
\]
This contradicts the strong maximum principle for $v:=\til u^{s_1}-\til u\ge 0$, which satisfies a linear equation $v_t=\Delta v + \lambda(t,x) v$
with $\|\lambda\|_\infty\le K$,  because  $v(0,0)=0$ and $v\not \equiv 0$.  The latter holds because otherwise $\til u$ would be time-periodic, contradicting \eqref{1.8} for $\til u$ (which propagates with a positive global mean speed because the same is true for $u_n$, with $n$-independent constants in Definition \ref{D.1.1b}, so \eqref{1.8} holds for $\til u$ by the first claim in (ii)).

So $m>0$ and since $u_t$ is uniformly bounded by parabolic regularity, there is $s_2\in(0,s_1)$ such that $u^s \ge u^{s_1} -m$ for any $s\in[s_2,s_1]$.  Thus \eqref{8.2} as well  as $u^s+\eps_0\ge u$ hold for any 
 $s\in[s_2,s_1]$.  Fix any such $s$ and let $\eps\in [0,\eps_0]$ be the smallest number such that $u^s+\eps\ge u$.   The  argument  in the proof of Lemma \ref{L.8.2} now shows that $\eps=0$. But since $s\in [s_2,s_1]$ was arbitrary, this means that we  can decrease $s_1$ to $s_2$,  a contradiction.  It follows that $s_1=0$.
\end{proof}

This finishes the proof of (ii).

(iii)  From bounded width of $u$ and Lemma \ref{L.11.1} it follows that $u$ propagates with a positive global mean speed.   Thus (ii) yields $u_t>0$, and then the (H') version of Corollary \ref{C.3.3a}(ii) from the proof of Theorem \ref{T.1.11} gives the result. 
Note that we did not use Theorem \ref{T.1.5} in the proof of Corollary \ref{C.3.3a}. 

%

\section{Proof of Theorem \ref{T.1.2}(ii) 
} \lb{S6}

We will prove this by constructing an example of an ignition $f$ which prevents  most solutions from having a bounded width (an almost identical construction can be made with a monostable $f$).  The idea is to find $f$ such that there is an equilibrium solution $p\in(0,1)$ of \eqref{1.1}, independent of $x_1$, with the transition $0\to p$ propagating faster in the direction $e_1=(1,0,\dots,0)$ than the transition $p\to 1$.  Then as $t\to\infty$, the solution $u$ will be close to $p$ on a slab $I_t\times\bbR^{d-1}$ (to the left of which $u\sim 1$ and to the right of which $u\sim 0$), with $I_t$ an interval of linearly growing length.

Let $\til p:\bbR^{d-1}\to(0,1)$ be $C^\infty$, radially  symmetric, radially decreasing, with 
\[
\til p (\til x)=3^{d-4}|\til x|^{3-d} \text{ for $|\til x|\ge 3$,} \qquad \Delta \til p <0 \text{ on $B_3(0)$,} \qquad \text{and} \qquad \til p(B_1(0))\subseteq(\tfrac 23,\tfrac 34).
\]
 Let $f_0:[0,1]\to[0,\infty)$ be a $C^\infty$ ignition reaction with $f_0(\til p(\til x))=-\Delta\til p(\til x)$ for $\til x\in\bbR^{d-1}$ (so ignition temperature is $\tht_0:=\tfrac 13$) and decreasing on $[\tfrac 34,1]$.  Then $p(x):=p(x_2,\dots,x_d)\in(0,\tfrac 34)$ satisfies on $\bbR^d$,
\[
\Delta p + f_0(p) = 0.
\]

Consider $f$ that satisfies (H) with this $f_0$, some $K\ge 1$ and $\tht:=\tfrac 14$, as well as $f(x,u)=f_0(u)$ for all $x\in\bbR^d$ and $u\in[0,\tfrac 12]\cup [p(x),1]$, and $f(x,u)\ge f_0(u)$ for all $x\in\bbR^d$ and $u\in(\tfrac 12,p(x))$ (provided this interval is non-empty).  Then obviously $f(x,u)= 0$ for $(x,u)\in \bbR^d\times[0,\tht_0]$.

\begin{lemma} \lb{L.9.1}
Let $f$ be as above and $c:=\max\{2\sqrt{\|f_0'\|_\infty},1\}>0$.  If $u$ solves \eqref{1.1}, \eqref{1.2} with $t_0=0$ and $u_0(x)\le p(x)+e^{-c(x_1-z)/2}$ for all $x\in\bbR^d$ and some $z\in\bbR$, then
\beq \lb{9.1}
u(t,x)\le p(x)+e^{-c(x_1-z-ct)/2}.
\eeq
\end{lemma}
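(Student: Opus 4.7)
The plan is to verify that the function
\[
w(t,x) := p(x) + e^{-c(x_1-z-ct)/2}
\]
is a super-solution of \eqref{1.1} on $(0,\infty)\times\bbR^d$ and then invoke the comparison principle, since $w(0,\cdot)\ge u_0$ by hypothesis.

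The key computation is straightforward. Writing $\phi(t,x):=e^{-c(x_1-z-ct)/2}$, I would compute $\phi_t=\tfrac{c^2}{2}\phi$ and $\Delta\phi = \tfrac{c^2}{4}\phi$ (since $\phi$ depends only on $x_1$), and use the equilibrium equation $\Delta p +f_0(p)=0$ for $p$ to get
\[
w_t-\Delta w = \tfrac{c^2}{2}\phi - \Delta p - \tfrac{c^2}{4}\phi = f_0(p) + \tfrac{c^2}{4}\phi.
\]
Thus the super-solution inequality $w_t-\Delta w - f(x,w) \ge 0$ reduces to
\[
\tfrac{c^2}{4}\phi \ge f(x,p+\phi)-f_0(p).
\]

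The main verification is a case split on the value of $p(x)+\phi(t,x)$, using the structural hypotheses on $f$. Since $\phi>0$, one has $p(x)+\phi>p(x)$, so in the range $p+\phi\in[p(x),1]$ the hypotheses force $f(x,p+\phi)=f_0(p+\phi)$; if $p+\phi\in[0,\tfrac12]$ (possible only when $p<\tfrac12$), the hypotheses again give $f(x,p+\phi)=f_0(p+\phi)$; and if $p+\phi\in(\tfrac12,p(x))$, then $f(x,p+\phi)$ may differ from $f_0(p+\phi)$ but only in the ``wrong'' direction for a sub-solution, which does not concern us. Actually the cases $p+\phi\le\tfrac12$ and $p+\phi\ge p(x)$ cover all of $[0,1]$ when $p\le\tfrac12$, and when $p>\tfrac12$ we always have $p+\phi>p(x)>\tfrac12$, so in all cases with $p+\phi\le 1$ we get $f(x,p+\phi)=f_0(p+\phi)$. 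Finally, for the region where $p+\phi>1$, I will extend $f(x,\cdot)$ by $0$ beyond $1$ (this is consistent with the setup) so that $f(x,p+\phi)=0\le f_0(p)+\tfrac{c^2}{4}\phi$ holds trivially.

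In the remaining (nontrivial) case $p+\phi\le 1$, the inequality to prove is
\[
\tfrac{c^2}{4}\phi \ge f_0(p+\phi)-f_0(p),
\]
which follows from the Lipschitz bound $f_0(p+\phi)-f_0(p)\le \|f_0'\|_\infty\phi$ together with the choice $c\ge 2\sqrt{\|f_0'\|_\infty}$ (so $c^2/4\ge\|f_0'\|_\infty$). This completes the verification that $w$ is a super-solution. Since $u_0\le w(0,\cdot)$ by hypothesis and both $u$ and $w$ are bounded (after the extension), the parabolic comparison principle yields $u(t,x)\le w(t,x)$ for all $(t,x)\in[0,\infty)\times\bbR^d$, which is \eqref{9.1}. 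The only subtle point in the plan is keeping track of the extension of $f$ above $1$ and making sure the comparison principle applies to the extended equation; this is routine since any classical solution $u\in[0,1]$ of the original equation is also a sub-solution of the extended equation.
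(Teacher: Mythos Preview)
Your proof is correct and follows essentially the same approach as the paper: verify that $w(t,x)=p(x)+e^{-c(x_1-z-ct)/2}$ is a super-solution and apply the comparison principle. The paper's proof is more streamlined because it observes directly that $\phi>0$ forces $w(t,x)>p(x)$, so one is \emph{always} in the regime $u\in[p(x),1]$ (or $u>1$ after extension) where $f(x,\cdot)=f_0(\cdot)$; your case split on $p+\phi\le\tfrac12$ versus $p+\phi\in(\tfrac12,p(x))$ is unnecessary, since the latter interval is empty and the former is already contained in $[p(x),1]$ whenever $p+\phi>p(x)$.
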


\begin{proof}
Let $v$ be the right-hand side of \eqref{9.1}.  Then
\[
v_t-\Delta v -f(x,v) =f_0(p(x))-f_0(v)+\frac{c^2}4 e^{-c(x_1-z-ct)/2} \ge 0
\]
by $c^2/4\ge \|f_0'\|_\infty$.  So $v$ is a super-solution for \eqref{1.1} with $v(0,\cdot)\ge u_0$, and we are done.
\end{proof}

That is, transition $p\to 1$ is propagating in the direction $e_1$ with speed at most $c$, which is independent of $K,f$.  We now make $f$ sufficiently large for $u\in(\tfrac 12,p(x))$ so that the transition $0\to p$ will be propagating faster than speed $c$.

Let $f^M$ be as $f$ above, with $f^M(x,u):=f_0(u)+M(u-\tfrac 12)(p(x)-u)$ for $u\in(\tfrac 12,p(x))$.  Let $t_0:=0$ and fix a radially symmetric and radially non-increasing $v_0$ such that
\beq \lb{9.2}
\frac 23 \chi_{B_{1/2}(0)} \le v_0 \le \frac 23 \chi_{B_{1}(0)} \qquad(\le p)
\eeq
and $\Delta v_0(x) + f^{M_0}(x,v_0(x))\ge 0$ for some $M_0\gg 1$.  Then the solution $v^M$ of \eqref{1.1}, with $f^M$ instead of $f$ and  $v^M(0,\cdot)=v_0(\cdot)$, is non-decreasing in both $t>0$ and $M\ge M_0$.  Therefore 
\[
w(t,x):=\lim_{M\to\infty} v^M(t,x) \qquad(\le p(x))
\]
 satisfies $w_t\ge 0$.  

We claim that $w(t,x)\notin (\tfrac 12,p(x))$ for any $(t,x)\in(0,\infty)\times\bbR^d$.  Otherwise there is $M_1\ge M_0$ such that $v^{M_1}(t,x)\in (\tfrac 12,p(x))$, and then there is $\eps>0$ such that $v^{M_1}(t-\eps,y)\in (\tfrac 12,p(y))$ for all $y\in B_\eps(x)$.  Since $v^M(t-\eps,\cdot)\ge v^{M_1}(t-\eps,\cdot)$ for all $M\ge M_1$, it follows from the definition of $f^M$ that $w(s,y)\ge p(y)$ for all $s>t-\eps$ and $y\in B_\eps(x)$.  This is a contradiction with the hypothesis $w(t,x)\in(\tfrac 12,p(x))$.  This,  \eqref{9.2}, and $v^M_t\ge 0$ thus show
\beq \lb{9.3}
w(t,x)=p(x) \qquad \text{for $(t,x)\in(0,\infty)\times B_{1/2}(0)$.}
\eeq

Pick some $0<\tau\ll 1$.  It is easy to show using the properties of the Gaussian that if $\tau$ is sufficiently small, then any super-solution of the heat equation on $D:=\bbR^d\setminus B_{1/2-\tau^{2/3}}(0)$ with initial condition $u(\tau,x)\ge 0$ for $x\in D$ and boundary condition $u(t,x)\ge \tfrac 35$ on $(\tau,\infty)\times \partial D$ satisfies $u(2\tau,x)>\tfrac 12$ for all $x\in B_{1/2+\tau^{2/3}}(0)$.  \eqref{9.3} shows that there is $M$ such that $v^M$ satisfies these conditions, and it follows that 
\beq \lb{9.4}
w(t,x)=p(x) \qquad \text{for $(t,x)\in (2\tau,\infty)\times B_{1/2+\tau^{2/3}}(0)$}.
\eeq

We can repeat this argument with \eqref{9.4} as a starting point instead of \eqref{9.3} and eventually obtain for all integers $n\ge \tau^{-2/3}$,
\beq \lb{9.5}
w(t,x)=p(x) \qquad \text{for $(t,x)\in (2n\tau,\infty)\times A_{n\tau^{2/3}-1/2}$},
\eeq
where $A_{a}:=\bigcup_{b\in[-a,a]} B_1(b,0,\dots,0)\subseteq \bbR^{d}$ (we need $A_{n\tau^{2/3}-1/2}$ instead of $B_{n\tau^{2/3}+1/2}(0)$ because $p(x)>\tfrac 12$ holds only when $|(x_2,\dots,x_d)|<C$, for some $C>1$) .  One can in fact show that $w(t,\cdot)=p(\cdot)$ for all $t>0$ but we will not need this.  
If we now choose $\tau>0$ so that there exists an integer $n\in((2c+1)\tau^{-2/3},(2\tau)^{-1})$, then \eqref{9.5} yields
\[
v^M(1,\cdot)\ge \frac 23 \chi_{A_{2c}}(\cdot),
\]
for some $M\ge M_0$.  Iterating this, we obtain for $m\in\bbN$,
\beq \lb{9.6}
v^M(m,\cdot)\ge \frac 23 \chi_{A_{2cm}}(\cdot).
\eeq

So let us take $f:=f^M$ and any $u_0$ as in Theorem \ref{T.1.2}(ii)  (without loss let $t_0=0$).  It follows from $c\ge 1$, $p\le\tfrac 34$, and Lemma \ref{L.9.1} with $z=0$ that
\beq \lb{9.7}
\sup_{t>0, \, x_1>ct+4} u(t,x) \le \frac 9{10}.
\eeq
If $u\to 1$ locally uniformly on $\bbR^d$ as $t\to\infty$, then $u(t_1,\cdot)\ge \tfrac 23 \chi_{B_{1/2}(0)}(\cdot)$ for some $t_1>0$, and so $u(t_1+m,2cm)\ge \frac 23$ for $m\in\bbN$ by \eqref{9.6}.   It follows from this and \eqref{9.7} that all claims in (C) are false for $u$.  This also holds when $u\not\to 1$ locally uniformly on $\bbR^d$ as $t\to\infty$, because then Lemma \ref{L.2.1} shows $\sup_{(t,x)\in (1,\infty)\times\bbR^d} u(t,x)<1$ (and $u\not\to 0$ uniformly by the hypothesis).

\section{Proof of Theorem \ref{T.1.6}} \lb{S7}

(i)  Having  Remark 2 after  Theorem \ref{T.1.11}, this is now rather standard.  Let $U,s_0,$ and small $\eps'>0$ be as in \eqref{11.3}, with $R'$ large enough so that $w_0(x):=W(|x|)$ satisfies \eqref{4.2} and the solution $w$ of \eqref{1.1} with $f_0$ in place of $f$ and $w(0,x):=w_0(x)$ spreads in the sense $w\to \tht_1$ locally uniformly as $t\to\infty$ \cite{AW}.  If now $u_n\in[0,u^+]$ is the solution of \eqref{1.1} on $(0,\infty)\times\bbR^d$ with $u_n(0,x)=w_0(x-ne_1)$, 
then $(u_n)_t\ge 0$; and the proof of Lemma \ref{L.11.1}, along with $f\ge f_0$ and the comparison principle, shows that $u_n\to u^+$ locally uniformly as $t\to\infty$.

If $t_n$ is the first time such that $u_n(t_n,0)=\tht_0$, shift $u_n$ in time by $-t_n$ so that now it solves \eqref{1.1} on $(-t_n,\infty)\times\bbR^d$ and $u_n(0,0)=\tht_0$.  Obviously $\lim_{n\to\infty} t_n=\infty$ because $u\le u^+\le 1$, $f(x,u)\le Ku$, and the comparison principle yield on $(-t_n,\infty)\times\bbR^d$,
\[
u_n(t,x)\le e^{-\sqrt K(x_1+n-s_0-R'-2\sqrt K (t+t_n))}.
\]
So by parabolic regularity, there is a sub-sequence along which $u_n$ and their spatio-temporal first and spatial second derivatives  converge locally uniformly to some solution $u$ of   \eqref{1.1} on $\bbR\times\bbR^d$.  Obviously $0\le u\le u^+$ and $u_t\ge 0$ (then  all three inequalities are strict due to the strong maximum principle), and since all the $u_n$ satisfy the Remark  after Theorem \ref{T.1.11} with the same $\ell_\eps,T_\eps$ (and $-t_n$ in place of $t_0$),  $u$ has a bounded width.   Theorem \ref{T.1.5}(ii) now shows that $u$ is a transition solution because bounded width and Lemma \ref{L.11.1} yield a positive global mean speed of $u$, finishing the proof.

(ii)
We will only {\it sketch} the proof, since the mechanics of the workings of the counter-example which we  construct are more important than the detailed proof.  The latter would only add tedious technical details, obscuring the main ideas.
Let us also only consider the case $d=2$ because the general case is identical, with annuli below replaced by shells.  


To find $f$ such that there is no transition solution with doubly-bounded width for \eqref{1.1} (and thus also no transition front), it is sufficient to take some ignition $f_0$ and let $f$ be equal to $\beta f_0(u)$ outside the union of the discs $B_n:=B_n(n^3e_1)$ (for some $\beta\gg1$), and $f(x,u)=f_0(u)$ inside each $B_{n-1}(n^3e_1)$ (with a smooth transition between the two on $B_{n}(n^3e_1)\setminus B_{n-1}(n^3e_1)$).  If $u$ is a transition solution for \eqref{1.1} with a bounded width, let $t_n$ be the first time when $\sup_{x\in B_n}u(t_n,x)=\tfrac 1{10}$ (i.e., when the reaction zone of $u$ ``reaches'' $B_n$).  Since $\beta\gg1$, the reaction will spread all over $A_n:=B_{2n}(n^3e_1)\setminus B_n(n^3e_1)$ before it spreads to $B_{n/2}(x_n)$, as described in the introduction (see below for more details).  So at the (later) time $s_n$ when $\inf_{x\in B_{n}} u(s_n,x)=\tfrac 12$, we will also have $\inf_{x\in A_n} u(s_n,x)\ge \tfrac 12$.  It follows that $L^{u,\eps}\ge n$ for all $n$ and $\eps\in(\tfrac 12,1)$.  Hence $u$ does not have a doubly-bounded width.

We will need to use a more involved construction to obtain $\inf_{x\in\bbR^2} u(t,x)>0$ for any $t\in\bbR$ and any $u$ from (ii).
Let  $f_0(u)=(2u-1)(1-u)\chi_{[1/2,1]}(u)$ and let $R$ be such that if $u_t=\Delta u + f_0(u)$ on $(0,\infty)\times\bbR^2$ and $u(0,\cdot)\ge \tfrac 34 \chi_{B_R(0)}(\cdot)$, then $u\to 1$ locally uniformly as $t\to\infty$.  By Lemma \ref{L.2.1}, such $u$ also satisfies
\beq \lb{10.2}
\lim_{t\to\infty} \inf_{|x|\le ct} u(t,x) = 1
\eeq
for any $c<c_0$, with $c_0>0$ the spreading speed for $f_0$ (and we have $c_0\le 2\sqrt{\|f_0(u)/u\|_\infty}\le 2$).

Let $\beta>1$  be such that if $u_t=\Delta u + \beta f_0(u)$ on $(0,\infty)\times\bbR\times[-2,2]$ with Dirichlet boundary conditions and $u(0,\cdot)\ge \tfrac 34 \chi_{B_1(0)}(\cdot)$, then
\beq \lb{10.1}
\lim_{t\to\infty} \inf_{x\in [-100t,100t]\times[-1,1]} u(t,x) \ge \frac 45.
\eeq
That is, the reaction with strength $\beta$ spreads along a strip with a cold boundary at speed at least 100.  It is not difficult to show that this holds for large enough $\beta$.  

Next 
let $f(x,u)=a(|x|)f_0(u)$, where $a:[0,\infty)\to [1,\beta]$ is smooth, Lipschitz with the constant $\beta$, with $a(r)=\beta$ if $|r-2^{n}|\le 3$ for some $n\ge 3$, and with $a(r)=1$ if $|r-2^{ n}|\ge 4$ for each $n\ge 3$.  That is, the reaction is large on a sequence of annuli with uniformly bounded widths and exponentially growing radii, and small elsewhere.  We obviously have $f\in F(f_0,\beta,\tfrac 14,\zeta,\eta)$ for any $\zeta,\eta>0$ because $\alpha_f(\cdot;\zeta)>\tht_0$ for any $\zeta>0$. 

Then pick $\eps_0\in(0,\tfrac 12)$ such that if $u\in[0,1]$ solves \eqref{1.1}, \eqref{1.2}, then $\inf_{y\in B_R(x)} u(t,x)\ge \tfrac 34$ whenever $t\ge t_0+1$ and $u(t,x)\ge 1-\eps_0$ (which exists by parabolic regularity) and also such that the unique traveling front for $u_t = u_{xx}+f_0(u)$ connecting $\eps_0$ and 1 has speed $c_{\eps_0}< 1.1c_0$ (which is possible because  $\lim_{\eps\to 0}c_\eps=c_0$).

Assume now that $u\not\equiv 0,1$ is a bounded entire  solution for \eqref{1.1} with bounded width.  By Theorem \ref{T.1.5}(i) we have $u\in(0,1)$, so Lemma \ref{L.2.1} yields a positive global mean speed of  $u$. Then Theorem \ref{T.1.5}(ii) shows that $u$ is a transition solution with $u_t>0$.

Let $t_0$ be the first time such that $u(t_0,0)=\tfrac 12$ and for any large $n$, let $t_n$ be the first time such that $\sup_{x\in B_{2^n}(0)} u(t_n,x) = \eps_0$.  Then the maximum principle shows that there is $x_n\in\partial B_{2^n}(0)$ with $u(t_n,x_n)=\eps_0$.  Since $L^{u,\eps_0}<\infty$, our choice of $\eps_0$ and $R$  shows that there is $T>0$ such that $\inf_{x\in B_1(x_n)}u(t_n+T,x)\ge \tfrac 34$ for each $n$.  It then follows from  \eqref{10.1} and $100>20\pi$ that  
\beq \lb{10.4}
\inf_{x\in B_{2^n+1}(0)\setminus B_{2^n-1}(0)} u \left( t_n+T + \frac {2^n}{20},x \right)\ge \frac 34
\eeq
for  all large $n$.  From this and \eqref{10.2} it follows that for all large $n$,
\beq \lb{10.3}
\inf_{x\in B_{2^n}(0)\setminus B_{2^{n-1}}(0)} u \left( t_n+T + \frac {2^n}{20} + \frac {2^{n-1}}{0.9c_0},x \right)\ge \frac 34.
\eeq

At the same time, $\sup_{x\in B_{2^n}(0)} u(t_n,x) = \eps_0$ and $c_{\eps_0}< 1.1c_0$ show that $u(t, 0)< \tfrac 12$ for $t\le t_n + 2^n(1.1c_0)^{-1}$ if $n$ is large, because the reaction can propagate radially no faster than at speed $c_{\eps_0}$ on any wide  annulus where $a(|x|)=1$, provided $u\le\eps_0$ initially (this is similar to the upper bound on the propagation speed in Lemma \ref{L.2.1a}, and also uses the fact that  the annuli on which $a(|x|)> 1$ have widths $\le 4$, so they shorten the time to reach the origin only by an amount proportional to $n$).   So $t_n + 2^n(1.1c_0)^{-1}\le t_0$ for all large $n$, and if we let 
\[
s_n:= t_n+T + \frac{2^n}{20} + \frac{2^{n-1}}{0.9c_0} \quad \left(\le t_n + \frac{2^n}{1.1c_0} \text{ if $n$ is large because $c_0\le 2$}\right),
\]
then we obtain $s_n\le t_0$ for all large $n$.  But then \eqref{10.3} and $u_t>0$ show for all large $n$,
\[
\inf_{x\in B_{2^n}(0)\setminus B_{2^{n-1}}(0) }u(t_0,x)\ge \frac 34.
\]
The result now follows from Lemma \ref{L.2.1}.

\smallskip
{\it Remark.}  It is an interesting question whether for the reaction in (ii), all entire solutions $u\in(0,1)$ satisfy $\lim_{t\to\infty} \inf_{x\in\bbR^2} u(t,x)=1$.

\begin{proof}[A rough sketch of the proof that the claim involving \eqref{1.00} is false]
We construct here an example involving front-like solutions in $\bbR^2$ (essentially the same idea works  for spark-like solutions as well as for all dimensions $d\ge 2$).  The full proof of it working as claimed would be quite technical, but the following clearly illustrates the main idea.

For some rapidly growing $b_n\to\infty$,  define
\[
A:=\bigcup_{n\ge 1} A_n := \bigcup_{n\ge 1} \big[ \{x\,|\, 0\le x_1 \le b_n \text{ and } |x_2|=b_n \} \cup \{x\,|\, x_1=b_n \text{ and } |x_2|\le b_n \} \big] \subseteq\bbR^2. 
\]
We then let $f(x,u)=a(d(x,A))f_0(u)$, where $f_0$ is the ignition reaction from part (ii) of the above proof and $a:[0,\infty)\to[1,\beta]$ is smooth, with $a(s)=\beta$ for $s\le 1$ and $a(x)=1$ for $s\ge 2$.  Here $\beta\gg 1$ will be chosen later.

We also let $s_0\gg 1$ and $w:\bbR\to[0,1]$ be smooth and such that $w(s)=0$ for $s\ge 1$ and $w(s)=1$ for $s\le 0$.  We then define $v_0(x):=w(x_1)$ and let $u_0(x)\in[w(x_1),w(x_1-2s_0)]$ be smooth and such that $u_0(x)=w(x_1)$ for $x_2\le -1$ and $u_0(x)=w(x_1-2s_0)$ for $x_2\ge 1$.  Finally, let $u,v$ solve \eqref{1.1} on $(0,\infty)\times\bbR^2$ with $u(0,\cdot)=u_0$ and $v(0,\cdot)=v_0$, and let $t_n$ be the first time such that $u(t_n,b_n,0)=\tfrac 12$.    

It is obvious that $u,v$ satisfy the hypothesis of the claim involving \eqref{1.00} because $u_0\ge v_0$ and also $u_0\le v(T,\cdot)$ for some $T$. So if  \eqref{1.00} holds, we must have  
\beq\lb{10.5}
\lim_{n\to\infty} [u(t_n,b_n,r)-u(t_n,b_n,-r)]=0 \qquad \text{for any $r\in\bbR$}
\eeq
 because $v$ is obviously even in $x_2$.  However, if we take $\beta\gg 1$ and  sufficiently rapidly growing $b_n$, then the reaction zone of $u$ spreads towards $(b_n,0)$ along the two ``arms'' of $A_n$ much faster than through anywhere else, and that propagation is virtually unaffected by the other ``arms''.  This and the definition of $u_0$ means that the reaction zone moving towards $(b_n,0)$ along the upper arm of $A_n$ is distance $\sim 2s_0$ ahead of the one arriving along the lower arm.  This means that if $s_0$ is chosen sufficiently large, depending on $\beta$ (but not on $b_n$), then $\liminf_{n\to\infty} u(t_n,b_n,s_0)\ge \tfrac 34$ and  $\limsup_{n\to\infty} u(t_n,b_n,-s_0)\le \tfrac 14$. But this means $\liminf_{n\to\infty} [u(t_n,b_n,s_0)-u(t_n,b_n,-s_0)]>0$, a contradiction with \eqref{10.5}.
\end{proof}

{\it Remarks.} 1.  This example can easily be adjusted to $v$ being a transition solution with a bounded width such that $v(t,x)=V(x_1-c_0t)$ for $t\ll -1$, where $c_0$ is the front/spreading speed and $V$ the traveling front profile for $f_0$.
\smallskip

2.  If $u,v$ are not required to be front-like (or spark-like), conuter-examples to \eqref{1.00} can be constructed even for homogeneous reactions and dimensions $d\ge 2$.




\end{document}